\documentclass[11pt]{article}
\usepackage[
  a4paper,
  margin=0.7in,
  headsep=5pt, 
]{geometry}
\usepackage{macros}
\usepackage{xcolor}
\usepackage{color}
\usepackage{microtype}
\usepackage{epigraph}
\usepackage[autostyle]{csquotes} 
\usepackage{newpxtext}
\usepackage{graphicx}
\graphicspath{ {./images/} }

\title{G\"odel's Incompleteness after Joyal}

\begin{document}
\author{Joost van Dijk, Alexander Gietelink Oldenziel}
\maketitle
\pagenumbering{arabic}
\begin{abstract}
We give proofs of G\"odel's incompleteness theorems after Joyal. The proof uses internal category theory in an arithmetic universe, a predicative generalisation of topoi. Applications to L\"ob's Theorem are discussed. 
\end{abstract}
\tableofcontents

\section{Introduction}
This article concerns an alternative proof of the first G\"odel Incompleteness theorem in the language of category theory due to André Joyal, relying crucially on his newly introduced notion of 'Arithmetic Universe'.\\
In 1973 Joyal lectured on his new proof, and a set of notes were circulated among a small group of workers in topos theory.  Unfortunately, the proof has never been made publicly available. This document means to remedy this gap in the literature. Additionally, we will try to indicate how the proof using arithmetic universes is related to the traditional proof. We will also detail a version of Lob's theorem.
\bigbreak \noindent
The G\"odel Incompleteness theorem remains without a doubt one of the high points of 20th century Mathematics. G\"odel's brilliant insight was the notion of arithmetication, simultaneously straightforward and profound. Straightforward, for the construction is a simple if occasionally finicky exercise in encoding various symbols and axioms, yet profound for it allows \emph{formal systems of mathematics to speak about themselves}. This self-reflective ability of formal systems has become a defining feature of many later advances in mathematical logic.\\
As mentioned, Joyal's alternative proof is based chiefly on the notion of 'Arithmetic Universe' a generalisation of the notion of a topos. Joyal constructs the initial Arithmetic Universe $\mathcal{U}$ using ideas from primitive recursive arithmetic. This construction is explicit and may be repeated within the Arithmetic Universe $U_0$. This uses the fact that Arithmetic Universes have enough structure to interpret many constructions that may be performed in Set. A more refined treatment would say that the internal language of $\mathcal{U}$ is a sufficiently expressive type theory - we will say more on this later. The rest of the argument then relies on manipulating a categorical incarnation of the Godel sentence constructed from $\mathbb{U}$ in $ \mathcal{U}$. 
\begin{defn}
An \emph{arithmetic universe} is a list-arithmetic pretopos. That is, a category with finite limits, stable disjoint coproducts, stable effective quotients by monic equivalence relations and parameterized list-objects. 
\end{defn}
 Quite a mouthful! Roughly, the first two conditions will allow us to do a large number of basic mathematical constructions; in the internal language it provides for conjunction $\wedge$, disjunction $\vee$, truth $\top$ and the falsum $\bot$. The third condition allows for quotients by equivalence relations; logically it adjoins an existential operator $\exists$.
\footnote{The expert reader will have noticed the similarity between the second and third of these axioms and the Giraud axioms of topos theory.}
The final condition, requiring list-objects will allow for the use of (primitive) \emph{recursion}. 
The explicit construction by André Joyal of the initial model $U_0$ of arithmetic Universes uses techniques of primitive recursive arithmetic, bringing in the ability to 'code' various mathematical objects. \\
The category $\mathcal{U}$ is build in stages. First, one starts with the initial 'Skolem theory' $\Sigma_0$, in effect a category whose objects are all products of $\nno$. This rather simple category has an internal language, that corresponds to register machines: a programming language where one has an infinite amount of memory states, and one can increase them by 1, set them to 0, or loop over a command. The resulting system is equivalent to that of primitive recursive functions. \\
The next step is to consider the category of decidable predicates in $\Sigma_0$, denoted $\Pred(\Sigma_0)$. The final step adjoins quotients to obtain $\Pred(\Sigma_0)_{ex/reg}$. It will be a theorem that this coincides with the initial arithmetic universe $\mathcal{U}_0$. 
The initiality of $\mathcal{U}_0$ is key, for it implies that $
\mathcal{U}_0$ is the syntactic category $Syn(\mathcal{T}_{AU})$, where $\mathcal{T}_{AU}$ is a weak variant on Martin-Lof type theory. 

\textit{Acknowledgements}. The authors would like to thank  Andr\'e Joyal, Antoine Beaudet, Ingo Blechschmidt, Gavin Wraith, David Roberts, Paul Levy, Steve Vickers, Paul Taylor, Alan Morrison and Sina Hazratpour for their help. 

\section{Arithmetic Universes}
In this section we cover some of the basics surrounding arithmetic universes.
\begin{defn}
Let $A,B$ be two objects in a category $\mathcal{C}$ with coproducts and pullbacks. We may form the coproduct $A+B$. The category $\mathcal{C}$ is said to have \emph{disjoint coproducts} if the pullback
\[
\begin{tikzcd}
   A\times_{A+B}B \arrow[r] \arrow[d]& B \arrow[d] \\
   A \arrow[r] & A+B  
\end{tikzcd}
\]
is isomorphic to the initial object $0$ [coproduct over the empty diagram].
\end{defn}
\begin{defn} Let $C$ be a category equipped with a terminal object.
A \emph{natural numbers object} is an object $\N$ equipped with a map $0: 1 \to \N$ and a map $S: \N \to \N$ such that for every $a: 1 \to X$ and $g: X \to X$ there is a unique map $f: \N \to X$ such that the following diagram commutes:
\[
\begin{tikzcd}
   1 \ar[r, "0"] \ar[dr, "a", swap] & \N \ar[r, "S"] \ar[d, "f"] & \N \ar[d, "f"] \\
 & X & X \ar[l, "g"] 
\end{tikzcd}
\]
\end{defn}
\begin{defn} \label{recursion}
Let $\mathcal{C}$ be a category with finite products. A \emph{parametrized natural numbers object} is an object $\N$ equipped with a map $0: 1 \to \N$ and a map $S: \N \to \N$ such that for every $a: A \to X$ and $g: X \to X$ there is a unique map $f: A \times \N \to X$ such that the following diagram commutes:
\[
\begin{tikzcd}
   A \ar[r, "0"] \ar[dr, "a", swap] & A \times \N \ar[r, "Id \times S"] \ar[d, "f"] & A \times \N \ar[d, "f"] \\
 & X & X \ar[l, "g"] 
\end{tikzcd}
\]
\end{defn}
\begin{defn}
A category $\mathcal{C}$ equipped with finite limits has \emph{parameterized list objects} if for any object $A \in Ob \mathcal{C}$ there is an object $List(A)$ with morphisms $c: 1\to List(A)$ and $app_A: List(A) \times A  \to List(A)$ such that for every $b:B \to Y$ and $g: Y\times A \to A$ there is a unique $rec(b,g)$ making the following diagram commute
\[
\begin{tikzcd}
   B\arrow[dr,"b"] \arrow[r,"{(id_B,c)}"] &B\times List(A) \arrow[d,"{rec(b,g)}"]& B\times (List(A)\times A) \arrow[l, "{id_B\times app_A}"] \arrow[d,"{rec(b,g)} \times id_A"]\\
   &Y&Y\times A \arrow[l,"g"]
\end{tikzcd}
\]
\end{defn}
\begin{remark}
The need for parameterized natural number objects and parameterized list objects is because as we will see arithmetic universe will not be Cartesian closed. In this setting the non-parameterized versions are not well-behaved. 
\end{remark}
\begin{defn} Let $C$ be a category with finite limits. An internal equivalence relation on $X$ is a subobject $R \hookrightarrow X \times X$ equipped with the following morphisms:
\begin{itemize}
    \item(Reflexivity) $r:X \to R$ which is a section of $p_1: X\times X \to X$ and of $p_2: X\times X \to X$.
    \item(Symmetry) $s: R\to R$ such that $p_1\circ s=p_2$ and $P_2 \circ s=p_1$. 
    \item(Transitivity) $t: R\times_X R \to R$ where if
\[
\begin{tikzcd}
   R\times_XR \arrow[r,"q_2"] \arrow[d, "q_1"] & R \arrow[d, "p_1"] \\
   R \arrow[r, "p_2"] & X
\end{tikzcd}
\]
is the pullback square the equations $p_1=\pi_1 \circ i, p_2=\pi_2 \circ i, p_1 \circ q_1=p_1 \circ t$ and $p_2\circ q_2=p_2\circ t$. Here
\[
\begin{tikzcd}
   R \arrow[r, hook, "i"] & X\times X \arrow[r, "\pi_1", shift right=1ex] \arrow[r,shift left=1ex, "\pi_2"]&  X 
\end{tikzcd}
\]
\end{itemize}
\end{defn}
\begin{defn}The coequalizer $X/R:= coeq(i_1,i_2)$ of an internal relation $R \hookrightarrow X\times X$ is called a quotient object:
\[
\begin{tikzcd}
   R  \arrow[r,"i_1", shift right=1ex] \arrow[r,"i_2", shift left=1ex]&  X \arrow{r}& X/R 
\end{tikzcd}
\]
A quotient $X\to X/R$ is called \emph{effective} when it arises as a kernel pair, i.e. the monomorphism
\[
X\times_{X/R}X \to X
\]
\end{defn}
\begin{defn}
Let $C$ be a category with finite limits. Let $P$ be a (categorical) property of an object or diagram of objects $I$ in $C$. We say $P$ is stable if when $P$ holds for $I$ it also holds for the pullback $f^*(I)$ in $C/A$ for any $f: A \to 1$.
\end{defn}
\begin{defn}
A \emph{pretopos} is a category equipped with finite limits, stable finite
disjoint coproducts and stable effective quotients of monic equivalence relations.
An \emph{arithmetic universe} is a pretopos which has parametrized list objects. 
\end{defn}

The theory of arithmetic universes is highly reminiscent of that of topoi. Yet arithmetic universes are quite different from topoi in a number of ways.
\begin{rmk}[Small versus large] Typically, almost every topos one deals with is large. In contrast in AU-theory there are many interesting small AU's. This has the distinct advantage that we may describe internal AU's simply as certain internal categories, unlike the case of (Grothendieck) topoi where one has to resort to indexed categories. \\
Small indexed categories and internal categories are almost equivalent, except that internal categories are more strict, various equations holding up to equality instead of isomorphism. Strictness is often important in obtaining interpretations for various (type-theoretic) languages. 
Explicit manipulation of internal categories will be key in the proof of G\'odel's incompleteness theorem.
\end{rmk}
\begin{rmk}[Recursive versus arbitrary infinities]
An important topic in topos theory is that of geometric logic and geometric theories. A distinct feature is the ability to consider infinitary theories over a topos $\mathcal{E}$, where axioms can be build as $\vee_{i\in I} A_i$-operator of atomic sentence $A_i$ over infinite collections $I$. These infinities arise from the base topos $\mathcal{E}$, in the sense that $I$ may be any object of $\mathcal{E}$. In the arithmetic approach, one cannot index over 'any' infinite set. Instead, one has to give an explicit recursive description of these sets. This phenomenon was the central impetus for investigating 'arithmetic reasoning' [where infinities must be recursively described] instead of 'geometric reasoning' [where the infinities may come from the base], see \cite{hazratpour18} and \cite{vickers17} based principally on Vicker's geometrization programme already outlined in the last section of \cite{vickers99a}.
\end{rmk}
\begin{rmk}[Predicative versus impredicative reasoning]
A third difference between topos theory and AU-theory is that of predicativity. Traditionally a topos is assumed to be locally cartesian closed, equipped with power objects and dependent products. Power sets have been regarded with suspicion, going back all the way back to Russel and Poincare, for they allow for 'impredicative reasoning', a fairly imprecise term referring to definitions of objects $X$ that already implicitly assume the existence of said object $X$. A famous instance of how \emph{impredicative reasoning} can be treacherous is Berry's paradox: 'the smallest positive integer not nameable in under 60 letters'. There are only finitely many strings under 60 letters, but there are infinitely many numbers. Hence there must be a least number not nameable in 60 letters. But we have just named it!\\
Russel argued that the paradoxical nature of Berry's non-nameable number is caused by a form of \emph{vicious circularity}. What goes wrong, he claimed, is that an entity is defined, or a proposition is formulated, in a way that is dangerously circular. \footnote{Not all mathematicians conclude from Berry's paradox the unreliability of impredicative reasoning. A more conventional view isolates the vagueness in the notion of 'definability' or 'nameability' as the culprit.} Nowadays, issues of predicativity have been mostly forgotten and even constructive mathematicians use power sets with gay abandon. Yet at the start of the 20th century the issue of impredicative reasoning attracted the attention of big-name mathematicians, the first predicative foundations going back to Weyl.\\
The arithmetic reasoning that is legitimate in AU's is natively predicative. The first important instance where predicative and impredicative foundations diverge occurs in the definition of topological space. Traditionally a topological space is a set together with a collection of subsets, ostensibly relying fundamentally on the power set axiom of ZFC.  In predicative foundations we need to change the definition of topological space; instead of regarding open sets as fundamental we regard a generating basis of open sets as the fundamental and axiomatises these, this leads to the notion of [various forms of] \emph{formal spaces}. 
Let us consider a concrete example where this change of definition will change the way we think about 'spaces'. The closure $cl(T)$ of a subset of $T$ of a topological space $X$ can be traditionally defined as the intersection $\bigcap_{T \subset C} C$ of all closed sets containing $T$. This is problematic predicatively as we quantify over a set $\{T \subset C\}$ to define $cl(T)$ that itself includes $cl(T)$.
In formal topology [which crop up when one considers the analogy of subtoposes for Arithmetic Universes] the notion of closure splits and closed sets split in the sense that there are multiple notions that are inequivalent predicatively, yet equivalent when one allows for impredicative notions like unrestricted power sets. 
\\
\end{rmk}

\section{Primitive Recursive Arithmetic and the Initial Arithmetic Universe}
\label{chapter:nno} 
\begin{defn}
A Skolem category $C$ is a category with finite products equipped with a natural number object. A \emph{Skolem theory} $\Sigma$ is a Skolem category where every object is a finite product of the natural number object $\nno_{\Sigma}$. 
A morphism of Skolem Theories is a functor which preserves finite products and the Natural Number object.
\end{defn}
 A Skolem theory is a kind of Lawvere theory where the operations arise from zero, successor and recursion. 

\begin{example}
The category of Skolem Theories has an intial object $\Sigma_0$ which might be called the minimal Skolem theory, or the "free theory"  generated by the basic data.
\end{example}
\begin{caveat}
One cannot identify the arrows of $\Sigma_0$ with actual functions; for let $\Sigma_{standard}$ be the full subcategory of $Set$ with objects $1, \mathbb{N}, \mathbb{N}^2,\cdots etc$. Since $\Sigma_0$ is initial we have a map of Skolem Theories:
\[
\Sigma_0 \to \Sigma_{standard}
\]
\[
(f:\mathbb{N}^k \to \mathbb{N}^r) \mapsto (|f|:\mathbb{N}^k \to \mathbb{N}^r)
\]
the idea is that $f$ exists as a primitive recursive algorithm for the 'actual' set-theoretic function $|f|$, and it can happen that $|f|=|g|$ but $f\not = g$. This might sound a little mysterious if one thinks in terms of the set-theoretic conception of functions, which identifies functions with their graph. We want to work in a setting where functions are more akin to algorithms; it may happen that functions coincide \emph{extensionally} but not \emph{intensionally}. \\
Extensional equality of functions $|f|=|g|$ means simply that $f(x)=g(x)$ for all $x$ in the domain, but functions might have algorithmic differences which prevent the $f$ from being equal to $g$. Intensional character of a function is the additional features of a function, such as its algorithmic character, that go beyond the extension (or graph) $|f|$ of the function $f$. Another way of seeing this is that two functions $f,g$ might be given by different algorithms and even if in Set theory we may prove $f(x)=g(x)$ for all $x$ in the domain, there might not be any primitive recursive construction that witnesses this! 
\end{caveat}
\begin{example} Let $a:\Sigma_0\to \Sigma_{Standard}$ be the canonical map. Consider the category obtained by inverting all maps $f:X\to Y \in \Sigma_0$ for which $a(f)$ is a bijection in $\Sigma_{Standard}$. By a theorem of Kleene this is the Skolem theory of total recursive functions.
\end{example}
A procedure will now be described which completes any Skolem Theory to a category with finite limits. Let $\Sigma$ be a Skolem theory. The idea is to adjoin "decidable subsets": define a subset of $\mathbb{N}^k$ to be a map $x: \mathbb{N}^k \to \mathbb{N} $ such that $x \wedge 1=x$. Write $\mathcal{P}_{dec}( \mathbb{N}^k ) $  for this class of subsets, Then $\mathcal{P}_{dec}$ is actually a contravariant functor: 
\[
(\mathbb{N}^k \xrightarrow{f} \mathbb{N}^r) \quad \text{induces} \quad \mathcal{P}_{dec}(\mathbb{N}^r )\xrightarrow{f^{\ast}} \mathcal{P}_{dec}(\mathbb{N}^k) 
\]
The diagonal $\Delta \in \mathcal{P}_{dec}(\mathbb{N}) $ is given by $E:\nno^2 \to \nno$ defined above; similarly one defines a diagonal $\Delta_k \in \mathcal{P}_{dec}(\nno^{2k}) $, for all $k$. The class of subsets $\mathcal{P}_{dec}(\nno^k)$ actually carries a Boolean algebra structure: unions, intersections and complements of subsets correspond to the supremum, infinum and 1 minus the corresponding characteristic functions. 

\begin{defn} Given a Skolem theory $\Sigma$ the category $\Pred(\Sigma)$ of
predicates in $\Sigma$ is defined as follows:
\begin{itemize}
    \item $Ob(\Pred(\Sigma)):$ Decidable predicates in $\Sigma$, that is morphisms $P: \nno \to \nno$ such that $P\ast P=P $.
    \item $Mor(\Pred(\Sigma))$: $\Sigma$-morphisms $f: \nno \to \nno$ such that $P \leq Q \circ f$ [where $Q\circ f$ is the composition of $Q$ with $f$] and two such $\Sigma$-morphisms $f,g : \nno \to \nno$ are equal if $P \ast f = P \ast g$
\end{itemize}
\end{defn}
\begin{prop}
Here are some properties of $\Sigma \to \Pred(\Sigma)$:
\begin{enumerate}
\item $\Pred(\Sigma)$ has finite limits: for example the equalizer of $f,g: \nno^k \toto \nno^r $ is just the subset $S\in \mathcal{P}_{dec}(\nno^k)$ given by $\nno^k \xrightarrow{(f,g)} \nno^r \times \nno^r \xrightarrow{E} \nno$.
\item $\Pred(\Sigma)$ is regular and satisfies the axiom of choice. In fact, any arrow factors as a split epi followed by a mono  ( caution: do not confuse monomorphisms with subsets!). 
Suppose we are given $f: \nno \to \nno $: we may factor $f$ as $\nno \twoheadrightarrow im(f) \hookrightarrow \nno$. Define $im(f)=\{<f(n),m>\in \nno \times \nno | m=\mu(k \leq n\in f^{-1}(n)) \} $, which is has the splitting of $\nno \twoheadrightarrow im(f)$ given by $(f(n),m> \mapsto m$.
Here the $\mu$ operator is primitive recursively defined as bounded minimization: it looks for the minimum $k\in f^{-1}(n)$ over all natural numbers smaller than $n$ which is indeed primitive recursive [unbounded minimisation is of course not primitive recursive]. 
\item Coproducts exist in $\Pred(\Sigma)$: given decidable subsets $S,T$ of $\nno$, for example, take the union of the decidable subsets $S\times \{0\}$ and $T\times \{1\}$ of $\nno^2$.
\item for any graph object
\[
G \to A_0
\]
in $\Sigma$, there exists a free category object $A_1 \toto A_0 $ over $G$ in $\hat{\Sigma}$
\end{enumerate}
\end{prop}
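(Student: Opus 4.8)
The plan is to handle the four assertions one after another, the recurring point being that every construction has to be given by explicit primitive recursive $\Sigma$-code and then seen to be well defined on the hom-sets of $\Pred(\Sigma)$, whose morphisms $P \to Q$ are $\Sigma$-maps satisfying the domination inequality and identified when they agree after precomposition with $P$. For (1) I would reduce finite limits to a terminal object, binary products and equalizers. The terminal object is the predicate $\top$ on $\nno^{0} = 1$; the product of decidable subsets $S \subseteq \nno^{k}$ and $T \subseteq \nno^{\ell}$ is the decidable subset $S \times T \subseteq \nno^{k+\ell}$ with the evident projections; and the equalizer of $f,g : S \toto T \subseteq \nno^{r}$ is the decidable subset $S \wedge (E \circ (f,g))$ exhibited in the statement. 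The universal properties are then checked directly; the only step that is not pure bookkeeping is noting that a cone into the candidate limit automatically satisfies the domination inequalities, because it already satisfies them as a cone into $S$, $T$, $\nno^{k}$, and so on.

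For (2) I would, given a $\Sigma$-map $f : \nno \to \nno$ representing a morphism with domain $P$, produce the factorization through the object $im(f)$ of the statement. Concretely $im(f)$ is the decidable subset of $\nno^{2}$ consisting of the pairs $\langle f(n), m \rangle$ where $m$ is the least $k \le n$ with $k \in P$ and $f(k) = f(n)$; this is primitive recursive since the search is a bounded minimisation. The epi $P \twoheadrightarrow im(f)$ is $n \mapsto \langle f(n), m(n) \rangle$, the mono $im(f) \hookrightarrow \nno$ is the first projection, and the section of the epi is the second projection $\langle v, m\rangle \mapsto m$; the key small lemmas are that two indices with the same $f$-value produce the same $m$ (so the epi lands in $im(f)$, and the second projection really is a section in $\Pred(\Sigma)$ modulo the hom-set quotient) and that the first projection is monic. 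Regularity then follows because split-epi/mono factorizations are automatically stable under pullback — a section pulls back to a section, so the class of split epis is pullback stable — and a split epi coequalizes its kernel pair; the internal axiom of choice follows because any regular epi, factored as a split epi followed by a mono, forces that mono to be invertible and hence is itself split.

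For (3) I would first transport the two objects along a primitive recursive bijection $\nno^{2} \cong \nno$ so that they are decidable subsets of $\nno$, and then take $S + T$ to be the decidable subset $(S \times \{0\}) \cup (T \times \{1\}) \subseteq \nno \times \nno \cong \nno$, with coprojections the evident inclusions. Disjointness is immediate because the two summands are separated by the value of the last coordinate, which is decidable; the universal property is verified by a decidable case split on that coordinate; and stability under pullback along an arbitrary morphism is again a direct computation, since pullback commutes with the Boolean operations used to build the coproduct.

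For (4) I would realise $A_{1}$ as the object of finite paths of the graph $G \toto A_{0}$. Using the primitive recursive coding of finite sequences in $\nno$, let $A_{1}$ be the decidable subobject of $A_{0} \times \nno$ whose elements are a vertex together with the code of a composable string of $G$-edges starting at that vertex (the empty string being allowed, which is what records the object underlying an identity arrow); the source and target maps read off the endpoints, the identity-assigning map sends a vertex to the empty path there, and composition concatenates codes, the category axioms being verified on codes. The universal property — that a graph morphism from $G$ into the underlying graph of a category object $C_{\bullet}$ of $\hat\Sigma$ extends uniquely to an internal functor $A_{\bullet} \to C_{\bullet}$ — is proved by recursion on the length of a path, which is exactly what the natural number object of $\Sigma$ supplies, together with a further recursion showing that the extension is independent of the chosen code and is functorial. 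I expect item (2) to be the real obstacle: the bounded-$\mu$ bookkeeping, and the need to keep everything compatible with the identification of $\Sigma$-maps that agree on the domain predicate, are where the argument is genuinely delicate rather than formal.
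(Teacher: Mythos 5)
The paper does not actually prove this proposition: it simply cites Proposition 4.7 of Maietti's paper, so there is no in-text argument to compare yours against line by line. That said, your constructions are essentially the standard ones and agree with the hints embedded in the statement itself: the equalizer via the equality predicate $E$, the image $im(f)$ cut out by bounded minimisation with the second projection as section, the coproduct as a tagged union transported along $\nno^2\cong\nno$, and the free category object as the decidable predicate of coded composable paths with the universal property established by recursion on path length. Your reduction of regularity and choice to the pullback-stability of split epis and the extremality of regular epis is the right way to package item (2). Two points deserve slightly more care than your sketch gives them, both of the kind you already flag. First, since morphisms of $\Pred(\Sigma)$ are identified when they agree on the domain predicate, the definition of $im(f)$ must carry the guard ``$k$ satisfies the domain predicate'' inside the bounded search (as your phrasing does); without it the image object would depend on the chosen representative of $f$, and the factorization would not be well defined on hom-classes. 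Second, in item (4) the verification that the recursively defined extension is independent of the chosen sequence coding, and that it is unique, is itself an induction that has to be carried out with only the primitive-recursive tools of $\Sigma$ available; this is where the argument is genuinely nontrivial rather than formal, and it is the part for which the paper is implicitly leaning on Maietti. Neither point is a gap in the sense of an argument that would fail, only places where the bookkeeping must be done honestly.
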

\begin{proof}
See Proposition 4.7 of \cite{maietti10a}.
\end{proof}
\begin{example}
The category $Pred(\Sigma_0)$ is the
category of decidable primitive recursive predicates.
\end{example}
For our purposes $\Pred(\Sigma)$ does not have enough categorical properties. It is necessary to make a second completion $\Sigma \to \Pred(\Sigma) \to \exact{\Pred(\Sigma)}$ which adds quotients. 
\begin{defn}[Exact/Regular completion]
Let $\mathcal{C}$ be a regular category. We form a new category whose objects are pairs $(R,A)$ with $R \hookrightarrow A\times A$ an equivalence relation on $A$, and whose maps $(R,A) \to (R,B)$ are classes of maps $f:A \to B$ such that $R \leq (f \times f)^{-1}(Q)$, under the relation $f \sim g$ if and only if there exists a lifting of $(f,g)$: 
\[
\begin{tikzcd}
& Q \arrow[d]\\
A \arrow[r] \arrow[ur,dotted] & B \times B
\end{tikzcd}
\]
The resulting category is an exact category. 
\end{defn}

\begin{prop}The category $(\Pred(\Sigma_0))_{ex/reg}$ is an arithmetic universe. In particular the following properties hold: 
\begin{itemize}
    \item finite limits exists
    \item it is a regular category [but notice that the step $\Pred(\Sigma) \to \exact{\Pred(\Sigma)}$ spoils the splitting of the image factorisation] 
    \item coproducts exists.
    \item free category objects exist for any graph object
    \item quotients exists (but are not split in general) 
\end{itemize}
\end{prop}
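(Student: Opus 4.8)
The plan is to establish the four listed properties (plus parametrized list objects, which together with them make $(\Pred(\Sigma_0))_{ex/reg}$ an arithmetic universe) by combining two inputs: general facts about the exact-on-regular completion $(-)_{ex/reg}$, which hand us finite limits, exactness and effective quotients essentially for free, and the structure of $\mathcal{C}:=\Pred(\Sigma_0)$ recorded in the previous proposition — finite limits, regularity with split images, stable disjoint coproducts, and free category objects over graphs — which must be transported along the embedding $\mathcal{C}\hookrightarrow\mathcal{C}_{ex/reg}$. Recall that an object of $\mathcal{C}_{ex/reg}$ is a pair $(A,R)$ with $R$ a $\mathcal{C}$-equivalence relation on $A$, the embedding sending $A$ to $(A,\Delta_A)$.

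\emph{Finite limits, regularity, effective quotients.} That $\mathcal{C}_{ex/reg}$ is an exact category whenever $\mathcal{C}$ is regular is the defining feature of the construction; a proof is in \cite{maietti10a}. Concretely, finite limits in $\mathcal{C}_{ex/reg}$ are computed by taking the corresponding limit of the carriers in $\mathcal{C}$ with the induced equivalence relation (terminal object $(1,1\times 1)$, products $(A,R)\times(B,S)=(A\times B,\,R\times S)$, equalizers as subobjects); the image factorisation is inherited from that of $\mathcal{C}$; and every equivalence relation of $\mathcal{C}_{ex/reg}$ is effective, since it is already presented by a relation on carriers. As flagged in the statement, the one thing that does \emph{not} survive is the splitting of images — a split epi of $\mathcal{C}$ need not remain split in $\mathcal{C}_{ex/reg}$ — so what we keep is regularity, not the axiom of choice.

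\emph{Coproducts.} In $\mathcal{C}$ finite coproducts exist, and the description in the previous proposition (the union of $S\times\{0\}$ and $T\times\{1\}$ in $\nno^2$) shows them to be disjoint and stable, disjointness being witnessed decidably by the equality predicate $E$ and stability by pullback-stability of decidable subsets. Hence in $\mathcal{C}_{ex/reg}$ we may put $(A,R)+(B,S):=(A+B,\;R+S)$ with $R+S\hookrightarrow(A+B)\times(A+B)$ the evident equivalence relation (relating within a summand by $R$, resp. $S$, and nothing across). Verifying the universal property is routine; disjointness and pullback-stability of this coproduct reduce to the same facts in $\mathcal{C}$ together with exactness of $\mathcal{C}_{ex/reg}$. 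This makes $\mathcal{C}_{ex/reg}$ a pretopos.

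\emph{Parametrized list objects, the crux.} Here I would follow Maietti. The first step lives inside $\Sigma_0$: finite sequences of natural numbers are coded primitive-recursively by single natural numbers (a pairing function, a length function and decidable projections), which equips $\nno$ with the data of $List(\nno)$ in $\mathcal{C}$, and more generally cuts out $List(A)$ as a decidable subset for every object $A$ of $\mathcal{C}$, together with $\mathrm{nil}$, append, and a recursor $rec(b,g)$ whose parametrized universal property is checked using the recursion available from the natural number object of $\Sigma_0$; alternatively one obtains $List(A)$ in $\mathcal{C}$ from property (4) of the previous proposition, as the arrow object of the free category on the one-vertex graph with edge object $A$. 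The second step lifts this to $\mathcal{C}_{ex/reg}$: to an object $(A,R)$ we assign $List((A,R)):=(List(A),\widehat{R})$, where $\widehat{R}$ relates two lists iff they have equal length and entrywise $R$-related entries — again a relation presented by a $\mathcal{C}$-predicate. The structure maps descend because append and $rec(b,g)$ respect $\widehat{R}$, and the universal property in $\mathcal{C}_{ex/reg}$ is obtained by descending it along the regular epis $A\to(A,R)$ and $List(A)\to List((A,R))$, exactness (effectiveness of quotients) producing the unique mediating morphism and its uniqueness. Stability and parametrization are inherited from those of the recursor. Finally, \emph{free category objects over a graph object} are then not an extra axiom but a consequence: given $G\to A_0$, the free category has for its arrows the object of finite composable $G$-paths, carved out of $List(G)$ by a finite-limit condition. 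I expect the main obstacle to be precisely the descent in this last step — showing both that the carrier-level recursor is $\widehat{R}$-equivariant \emph{and} that the induced map on quotients is the unique one making the $\mathcal{C}_{ex/reg}$-recursion diagram commute, while the parameter object (itself a quotient) varies — which is where effectiveness of quotients, letting maps out of a quotient correspond to equivariant maps out of its carrier, carries the argument.
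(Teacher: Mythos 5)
Your proposal is correct in outline, but it takes a genuinely different route from the paper only in the trivial sense that the paper has no route at all: its entire proof is the citation ``See proposition 4.10 of \cite{maietti10a}.'' What you have written is essentially a compressed reconstruction of the cited argument, so the comparison is really between your sketch and Maietti's proof rather than the paper's. The decomposition you use --- (i) the general theory of the exact-on-regular completion for finite limits, regularity and effective quotients, (ii) coproducts defined carrier-wise as $(A+B,\,R+S)$, (iii) list objects built first in $\Pred(\Sigma_0)$ by primitive-recursive coding of sequences and then descended to quotients --- matches the cited source. Two small points of divergence are worth flagging. First, the paper's bullet list (following its own earlier proposition) treats \emph{free category objects over a graph} as a primitive property carried up from $\Pred(\Sigma)$, whereas you derive them from list objects by carving composable paths out of $List(G)$; both orderings occur in the literature and either works, but you are silently supplying the disjointness and stability of coproducts in $\Pred(\Sigma_0)$, which the paper's earlier proposition asserts only as bare existence --- your appeal to the decidable equality predicate $E$ is the right justification and should be made explicit. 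Second, your crux --- that the carrier-level recursor is $\widehat{R}$-equivariant and that the universal property descends --- is correctly identified as the only nontrivial step, but as stated it still owes an argument: equivariance of $rec(b,g)$ with respect to the entrywise relation $\widehat{R}$ is itself proved by an induction on lists, i.e.\ by another application of the recursor to a decidable predicate on $List(A)\times List(A)$, and one must check that this predicate is expressible in $\Pred(\Sigma_0)$ (it is, since $\widehat{R}$ is given by a bounded, hence primitive recursive, conjunction). With that one lemma spelled out your sketch is a complete proof; as it stands it is a faithful and correct summary of what the paper delegates to the reference.
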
 
\begin{proof}
See proposition 4.10 of \cite{maietti10a}.
\end{proof}
These axioms are the defining properties for the notion for Arithmetic Universe (AU). Applying the completion procedure for $Pred(\Sigma_0)$ yields the initial Arithmetic Universe $U_0$.
\begin{thm} The category $\exact{\Pred(\Sigma)}$ coincides with the initial arithmetic universe. 
\end{thm}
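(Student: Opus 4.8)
\section*{Proof proposal}

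The plan is to show that $\exact{\Pred(\Sigma_0)}$ satisfies the universal property characterising the initial arithmetic universe: for every arithmetic universe $\mathcal{A}$ there is an AU-functor $\exact{\Pred(\Sigma_0)} \to \mathcal{A}$, unique up to unique natural isomorphism. Since the previous proposition already tells us $\exact{\Pred(\Sigma_0)}$ \emph{is} an AU, and since an object with the initiality property is determined up to equivalence, this simultaneously establishes the existence of the initial AU and identifies it with our explicit construction. The argument proceeds by peeling off the three layers $\Sigma_0 \to \Pred(\Sigma_0) \to \exact{\Pred(\Sigma_0)}$, each of which carries its own universal property, and then checking that the structure lines up.

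Given an AU $\mathcal{A}$, observe first that $\mathcal{A}$ has a natural numbers object, for instance $N := List(1)$, and that the full subcategory $\Sigma_\mathcal{A} \hookrightarrow \mathcal{A}$ spanned by the finite powers of $N$ is a Skolem theory. Since $\Sigma_0$ is initial among Skolem theories, there is a unique morphism of Skolem theories $\Sigma_0 \to \Sigma_\mathcal{A}$, hence a unique product-preserving, NNO-preserving functor $\Sigma_0 \to \mathcal{A}$. Next we invoke the universal property of the predicate completion: $\Pred(-)$ is the free construction adjoining finite limits together with a well-behaved calculus of decidable subobjects to a Skolem theory, so the above functor extends, uniquely up to isomorphism, to a finite-limit-preserving functor $\Pred(\Sigma_0) \to \mathcal{A}$ (this is the content of Maietti's analysis of $\Pred$ in \cite{maietti10a}). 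Because image factorisations in $\Pred(\Sigma_0)$ are given by split epis followed by monos, and split epis are preserved by any functor, this extension is automatically a regular functor. Finally, $\exact{(-)}$ is left adjoint to the forgetful functor from exact to regular categories; as $\mathcal{A}$ is exact, the regular functor $\Pred(\Sigma_0) \to \mathcal{A}$ extends uniquely, up to isomorphism, to an exact functor $\exact{\Pred(\Sigma_0)} \to \mathcal{A}$. Composing the three stages yields the desired functor.

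It remains to check that this composite is genuinely an AU-morphism, i.e. that beyond preserving finite limits and effective quotients (automatic for an exact functor) it also preserves stable finite coproducts and parametrized list objects. The key point is that inside $\exact{\Pred(\Sigma_0)}$ the coproducts and the list objects are not extra data but are \emph{built} from the NNO-structure by the primitive-recursive encodings discussed above (finite coproducts as decidable subsets $S\times\{0\} \sqcup T\times\{1\}$, and $List(A)$ via the standard primitive-recursive coding of finite sequences as natural numbers), together with finite limits and quotients. Since our functor preserves the NNO, finite limits, and quotients, and since all these constructions are stable, it transports the coproduct and list structure correctly, so it is an AU-morphism. Uniqueness follows by running the three universal properties in reverse: any AU-morphism $\exact{\Pred(\Sigma_0)} \to \mathcal{A}$ restricts along $\Sigma_0 \to \exact{\Pred(\Sigma_0)}$ to a Skolem morphism, which is unique by initiality of $\Sigma_0$, and its extensions to $\Pred(\Sigma_0)$ and then to $\exact{\Pred(\Sigma_0)}$ are forced up to unique isomorphism by the universal properties of the two completions.

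The main obstacle — the step deserving genuine care rather than a one-line appeal — is the last one: pinning down the precise $2$-categorical universal property of $\Pred(-)$, and, relatedly, verifying that the coproduct and parametrized-list structure of $\exact{\Pred(\Sigma_0)}$ is \emph{definable} from the Skolem structure in a way respected by every finite-limit-preserving, NNO-preserving, exact functor. Equivalently, one must argue that the AU-structure on $\exact{\Pred(\Sigma_0)}$ is canonical, independent of the arbitrary choices hidden in the encodings, so that it is necessarily preserved. This is exactly where one leans on Maietti's detailed treatment (Propositions 4.7--4.10 of \cite{maietti10a} and the surrounding discussion); the remainder is bookkeeping with adjunctions.
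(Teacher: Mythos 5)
Your route is genuinely different from the paper's. The paper offers no argument at all here---it defers wholesale to Theorem 6.2 of \cite{maietti10a}, and Maietti's own proof runs through the dependent type theory $\mathcal{T}_{AU}$: one shows the syntactic category is initial by interpreting the calculus in an arbitrary AU, and separately that the syntactic category is equivalent to $\exact{\Pred(\Sigma_0)}$ (cf.\ the later theorem in the paper citing Theorem 6.6 of \cite{maietti03}). You instead give a direct $2$-categorical argument, stacking the universal properties of the three layers $\Sigma_0 \to \Pred(\Sigma_0) \to \exact{\Pred(\Sigma_0)}$. This is an attractive decomposition: it makes visible exactly where each piece of AU structure enters (the NNO from initiality of $\Sigma_0$, regularity from the split epi--mono factorisation, exactness from the $ex/reg$ biadjunction), whereas the type-theoretic route buries this in the soundness of the interpretation. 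Your observation that the intermediate functor is automatically regular because covers in $\Pred(\Sigma_0)$ are split is a genuinely nice shortcut.

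That said, the two steps you flag as needing care are not bookkeeping---they are the entire content of the theorem, and as stated your middle step is not quite right. $\Pred(-)$ is not a free finite-limit completion: its universal property is that of freely splitting the \emph{decidable} predicates into complemented subobjects, and to extend $\Sigma_0 \to \mathcal{A}$ along it you must convert a map $P\colon \nno^k \to \nno$ with $P \wedge 1 = P$ into a subobject of $N^k$ in $\mathcal{A}$, which is done by pulling back along $1\colon 1 \to N$ and requires the \emph{disjoint stable coproducts} of $\mathcal{A}$ (to see that the resulting subobject is complemented and that the assignment is functorial); so the extension is not obtained from finite limits alone. Likewise, preservation of parametrized list objects is the crux: one must show that the G\"odel-coded $List(A)$ inside $\exact{\Pred(\Sigma_0)}$ is carried to an object satisfying the list-object universal property in $\mathcal{A}$, which requires running a recursion argument in $\mathcal{A}$ itself and is precisely where Maietti's Propositions 4.7--4.10 and the type-theoretic machinery do their work. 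Since you defer both points to \cite{maietti10a}, your proposal is an outline of a viable alternative proof rather than a self-contained one---which, to be fair, is also all the paper provides.
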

\begin{proof}
See Theorem 6.2 of \cite{maietti10a}.
\end{proof}

\section{Arithmetic Type Theory}

We will see that there is a very precise correspondence between the initial arithmetic universe $U_0$ and the arithmetic type theory calculus $\mathcal{T}_{AU}$. This typed calculus can be thought of as a type-theoretic incarnation of primitive recursive arithmetic. Indeed, any primitive recursive function can be encoded in this type theory.
There is also a related but different correspondence between register machines and Skolem theories, cf \cite{morrison96}. Using register machines one can also encode primitive recursive functions. 

In this chapter we will introduce arithmetic type theory and detail how it corresponds to the internal language of arithmetic universes. 

\begin{rmk}
Type theory might not be familiar to most mathematicians brought up in the set-theoretic tradition. Type theory is an alternative foundation for mathematics, whose basic objects are not elements and sets as in traditional set-theory but terms and types. Proponents claim it to have several distinct advantages over the  traditional set-theory based formulation of mathematics. For one, Type Theory is natively constructive. It also integrates the logic and the rest of the foundational system more tightly than in the traditional set-up, where ZFC is build on top of a specified logical syntax. Interest in type theory has surged with the recent development of Homotopy Type Theory, see \cite{HoTT2013}. We will be mostly concerned with a specific type theory called Arithmetic Type theory $\mathcal{T}_{AU}$. 
\end{rmk}

\begin{rmk}
Objects of study of type theory, i.e. types, have different ontological status than objects of study of set theory, i.e. sets. Types are constructed together with their elements, and not by collecting some previously existing elements unlike the case of sets. ``A type is defined by prescribing what we have to do in order to construct an object of that type.''~\cite{pml:int-th-type}.
The fundamental principle of type theory is that types should be defined by introduction, elimination, and computation rules.
This is closely related to the well-known principle of category theory: objects should be defined by universal properties.

To make this point clear we give the example of binary product as a universal construction in category theory. The following table illustrates the connection between categorical products and type theoretic products:
\[
\begin{tabular}{c@{\hskip 1in}c@{\hskip 0.5in}c}    \toprule
\emph{Type theory}     & \emph{Category theory}     \\ \midrule
  $\inferrule{z:A \times B}{\fst{z}:A,\ \snd{z}:B}$ & $A \xlaw{\pi_1} A \times B \xraw{\pi_2} B $    \\ 
  & \\
$\inferrule{a:A,\ b:B}{\tuple{a}{b}:A \times B}$ & 
\begin{tikzpicture}[baseline=(a.base)]
\node (a) at (0,0) {$X$};
\node (b) at (2.5,0) {$A \times B$};
\node (c) at (4.3,1) {$A$};
\node (d) at (4.3,-1) {$B$};
\draw[->]
(b) to node[midway,fill=white]{$\pi_1$} (c);
\draw[->]
(b) to node[midway,fill=white]{$\pi_2$} (d);
\draw[->]
(a) to[bend left=30] node(e_3){} node[midway,fill=white]{$a$} (c);
\draw[->]
(a) to[bend right=30] node(e_4){} node[midway,fill=white]{$b$} (d);
\draw[->, dashed]
(a) to node[midway,above]{$\tuple{a}{b}$} (b); 
\end{tikzpicture} \\
 &  \\
$\fst \tuple{a}{b} = a$ & $\pi_1  \tuple{a}{b} = a$  \\  
$\snd \tuple{a}{b} = a$ & $\pi_2  \tuple{a}{b} = b$   \\
& \\
$\tuple{\snd{z}}{\fst{z}} = z$ & uniqueness (in the UP)  \\
\bottomrule
 \hline
\end{tabular}
\]
\end{rmk}
Type theory is an alternative to the traditional set-theoretic foundations. Recently, a variant called Homotopy type theory has attracted a great amount of interest [\cite{HoTT2013}]. Formerly the beau of a small cadre of logicians, computer scientists and heterodox mathematicians type theory has blossomed with the advent of the Univalent Foundations Program/ Homotopy Type Theory and commands ever larger throngs of adherents. 
We will be primarily interested in a non-homotopic variant which we'll christen Arithmetic Type Theory for convenience. 

\begin{rmk}
Type theory is natively constructive, meaning in particular that the Principle of Excluded Middle does not hold in general. It is commonly supposed that constructive mathematics imposes too heavy a constraint on the tools a mathematician may use. \\
For many it comes as a surprise to learn that constructive mathematics is \emph{more general} than classical mathematics. Classical mathematical systems may in fact be embedded inside their constructive counterparts. Some favoured theorems will not hold as stated, but experience suggests that by changing the definitions slightly the spirit if not the letter of the law may be preserved. Rather than a annoying inconvenience searching for the alternative formulations often turns up new mathematics in well-trodden fields.
For a very clear exposition of the advantages of constructive mathematics and the relation with the internal language of topoi [of which arithmetic universes are a generalisation] see the first two chapters of \cite{Blechschmidt2017}.
\end{rmk}
In traditional foundations one learns that everything is a set. Set theory starts as a theory with one binary connective $\in$ and then proceeds by postulating a number of axioms with the hope of explaining all the possible behavior a set must have. Type theory starts from an entirely different perspective. It involves several kinds of declarations, called \emph{judgments}, which declare that something is a type or term of a type. These judgments are derived from rules which explain how new judgments can be made from old ones. As an example, suppose that we interested in the disjoint sum. Assuming that the type theory has the disjoint sum type, then the disjoint sum comes with the following rules:
\begin{itemize}
    \item If $A$ and $B$ are types, then $A+B$ is a type.
    \item If $a:A$ is a term of type $A$, then $\iota_1(a):A+B$ is a term of type $A+B$. Similarly, if $b:B$ is a term of type $B$, then $\iota_2(b):A+B$ is a term of type $A+B$.
\end{itemize}
Actually, more rules are needed which explain how the terms of a disjoint sum are used, see below. It is hoped that the general syntax is nevertheless elucidated. 

The type theory we need is a type theory constructed by Maietti \cite{maietti05b} in the style of Martin-L\"of \cite{martin-lof84}. One of the features of this kind of type theory is that the identity can be considered as a type. This gives us two different kinds of ways to talk about identities in the type theory. There are identities which are postulated by valid judgments $a=a':A$ that say that the terms $a$ and $a'$ are judged equal. So for the above example, we would need a rule that says that if $a=a':A$ is a valid judgment, then $\iota_1(a)=\iota_1(a'):A+B$ is a valid judgment. This kind of equality is understandably called \emph{judgmental equality}. But then there are \emph{propositional equalities}: if $A$ is a type and $a:A$ and $a':A$ are terms of that type then there is a type $a=_Aa'$. The idea is that if this type has a term, then the equality must be equal. So terms should be thought of as proofs, hence the name propositional equality. The rules of this equality work (approximately, the more formal definition follows later) as follows:
\begin{itemize}
    \item If $a=a':A$ is a valid judgment, then there is a term (proof) $\ast:a=_Aa'$.
    \item If $t:a=_Aa'$ is a term of the type, then $t=\ast:(a=_Aa')$. 'All proofs are equal'.
    \item If $t:a=_Aa'$ is a term of the type, then $a=a':A$ is a judgmental equality.
\end{itemize}
\begin{remark}

The last two rules makes our type theory an \emph{extensional type theory}. An \emph{intentional type theory} is one where one does not try to collapse the different inhabitans of the identity type. The distinction was first made by Martin-L\"of, who published his intentional type theory in 1975 \cite{martin-lof75} and his extensional one in 1984 \cite{martin-lof84}. The intentional type theory has the advantage that propositional equality is decidable. In extensional type theory the propositional equality is not decidable, but at first sight it seems more suitable for ordinary mathematics. However, it has been argued at length \cite{hofmann95} that intentional type theory with extensional features is actually the right way to do type theory.  Homotopy type theory is based on Martin-L\"of's intentional type theory, adding the univalence axiom, which says roughly that isomorphic structures may be identified. The main idea of homotopy type theory is that a type should not be seen as a type, but as a space (in the sense of homotopy theory). Homotopy type theory provides a new foundation of mathematics called \emph{univalent foundations}. This not only provides an easier way to build a proof-checkers of ordinary mathematics, it also provides a way to do interesting mathematics which has not come up under classical set theory: synthetic homotopy theory. See the Homotopy type theory book \cite{HoTT2013} for more information. 
\end{remark}

Maietti's type theory is an extensional type theory based on the extensional version of Martin-L\"of's type theory \cite{martin-lof84}.

Type theory reasons about declarations called \emph{judgments}. There are the judgments that say a particular structure is a context, type or term.
\begin{itemize}
    \item $\Gamma$ is a context (formally `$\Gamma$ cont').
    \item $A$ is a type in context $\Gamma$ (formally `$x:\Gamma \vdash A(x) \; \text{Type}$).
    \item $a:A$ is a term in context $\Gamma$ (formally $x:\Gamma \vdash a:A(x)$).
\end{itemize}
Then there are also the judgments which declare certain contexts, types or terms to be equal:
\begin{itemize}
    \item $\Gamma$ and $\Gamma'$ are equal as contexts (formally `$\Gamma = \Gamma'$ cont').
    \item $A$ and $B$ are equal types in context $\Gamma$ (formally $x:\Gamma \vdash A=B $).
    \item $a:A$ and $a':A$ are equal terms in context $\Gamma$ (formally $\Gamma \vdash a = a':A$).
\end{itemize}
These judgments are derived by certain rules allowing one to make a new valid judgment $\tau$ from a list of valid judgments $\sigma_1, \dots, \sigma_n$. Usually in type theory, this is written down in a strictly formalized way as
\[
\frac{\sigma_1 \dots \sigma_n}{\tau}
\]
We are going to be a bit more informal; we think that it is less daunting for an ordinary mathematician to see the statement `if $\sigma_1, \dots, \sigma_n$ are valid judgments, then $\tau$ is a valid judgment'. For the more formal description of Maietti's type theory see \cite{maietti05b}.

The first rules we take care of are the \emph{context rules}. A context is a list of variables in types $x_1 \in A_1, \dots, x_n:A_n$ where each type may depend on the previous one, so the contexts are generated by the rules
\begin{itemize}
    \item The empty list $\emptyset$ is a context.
    \item If $\Gamma$ is a context and $\Gamma \vdash A(x) \; \text{Type}$ a type dependent on $\Gamma$, then $\Gamma, x:A$ is a context, where $x$ is required to be a new variable not in $\Gamma$.
\end{itemize}
There are more \emph{structural rules}. A variable in a context can be declared a term:
\begin{itemize}
    \item If $\Gamma, x:A, \Delta$ is a context, then $\Gamma, x:A, \Delta \vdash x:A$ is a term in context.
\end{itemize}
There are some coherence rules for equality:
\begin{itemize}
    \item If $\Gamma \vdash A=B$ are equal types and $\Gamma \vdash a:A $ is a term of $A$ in context $\Gamma$, then $\Gamma \vdash a:B$ is a term of type $B$ in context $\Gamma$.
    \item If $\Gamma \vdash a=a':A$ are equal terms of type $A$, and $\Gamma \vdash A=B$ type are equal types, then $a=a':B$ are equal terms of type $B$.
\end{itemize}
There are some ordinary equality rules as well that say that equality behaves as an equivalence relation. We only write down these rules for types:  
\begin{itemize}
    \item If $\Gamma \vdash A \; \text{Type}$ is a type in context, then $A$ is equal to itself: $\Gamma\vdash A=A$.
    \item If $\Gamma \vdash A=B$ is an equality of types, then $\Gamma \vdash B = A$ is an equality of types as well.
    \item If $\Gamma \vdash A=B$ and $\Gamma \vdash B = C$ are equalities of types, then so is $\Gamma \vdash A = C$.
\end{itemize}
These rules must of course also hold for the equalities of contexts and terms.

The following two rules are derivable by induction, but they are nevertheless important. The \emph{substitution rules} are
\begin{itemize}
    \item If $\Gamma \vdash a:A$ and $\Gamma, x:A, \Delta \vdash b(x):B(x)$ are terms, then $\Gamma,\Delta[a/x]\vdash b(a):B(a)$, where $\Delta[a/x]$ is $\Delta$ with all mentions of $x$ replaced by $a$.
    \item If $\Gamma\vdash a:A$ and $\Gamma, x:A, \Delta \vdash b(x) = b'(x):B(x)$ then $\Gamma,\Delta \vdash b(a) = b'(a):B(a)$.
\end{itemize}
The \emph{rules of weakening} are
\begin{itemize}
    \item If $\Gamma\vdash A \; \text{Type}$ is a type and $\Gamma,\Delta \vdash b:B$ a term, then $\Gamma, \Delta \vdash b:B$ is a term.
    \item If $\Gamma\vdash A \; \text{Type}$ is a type and $\Gamma,\Delta \vdash b=b':B$ an equality of terms, then $\Gamma, x:A, \Delta \vdash b=b':B$ is an equality of terms.
\end{itemize}

\subsubsection*{Type constructors}

The remaining kind of rules that need to be discussed are the rules that belong to type constructors - a type constructor specifies how to build a new type from existing ones and how to use it. Each type constructor comes with five kind of rules:
\begin{enumerate}
    \item \emph{Formation rules} that say when the type constructor yields a new type.
    \item \emph{Introduction rules} that say how to define new terms of the newly constructed type.
    \item \emph{Elimination rules} that say how terms of the type are used.
    \item \emph{Computation rules} that say how the elimination and introduction rules are combined.
    \item \emph{Uniqueness rules} that say how the terms of the type are uniquely determined by the elimination rules. They are often omitted, for they are usually derivable.
\end{enumerate}
Then each time a new type or term is introduced there need to be new rules that state that the equality is well-behaved. For instance, when the disjoint sum type constructor and its terms are defined there are additional equality rules
\begin{itemize}
    \item If $\Gamma \vdash A = A'$ and $\Gamma \vdash B=B'$ are type equalities, then there is the type equality $\Gamma \vdash A+B=A'+B'$. 
    \item If $\Gamma \vdash a=a':A$ are equal terms, then $\Gamma \vdash \iota_1(a) = \iota_1(a'):A+B$ are also equal terms.
\end{itemize}
These usually remain unstated, for no other reason than to save space. The rules for the term and type constructors are given by Maietti as follows
\begin{figure}[htp]
    \centering
    \includegraphics[width=14cm]{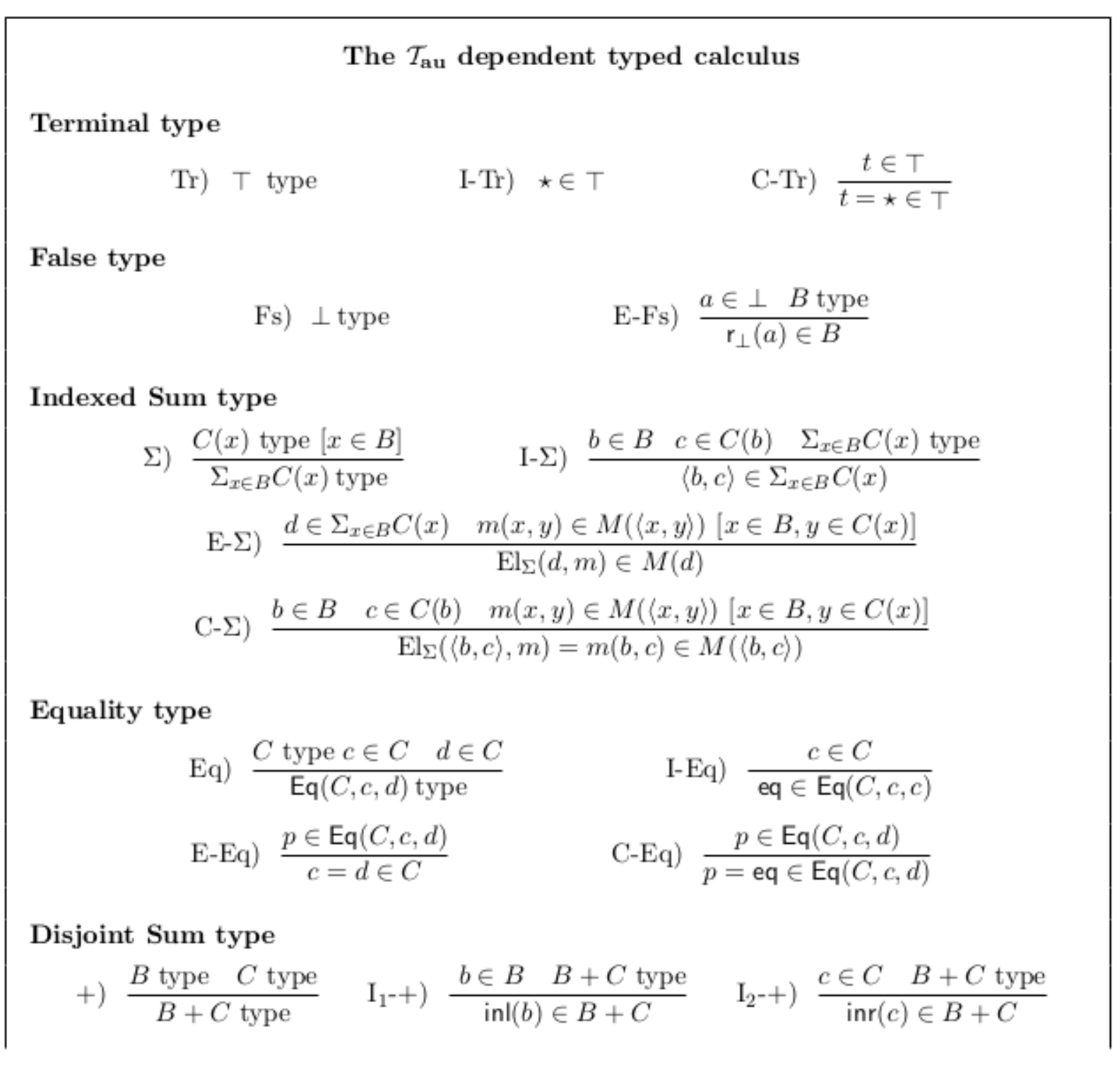}
\end{figure}

\begin{figure}[htp]
    \centering
    \includegraphics[width=14cm]{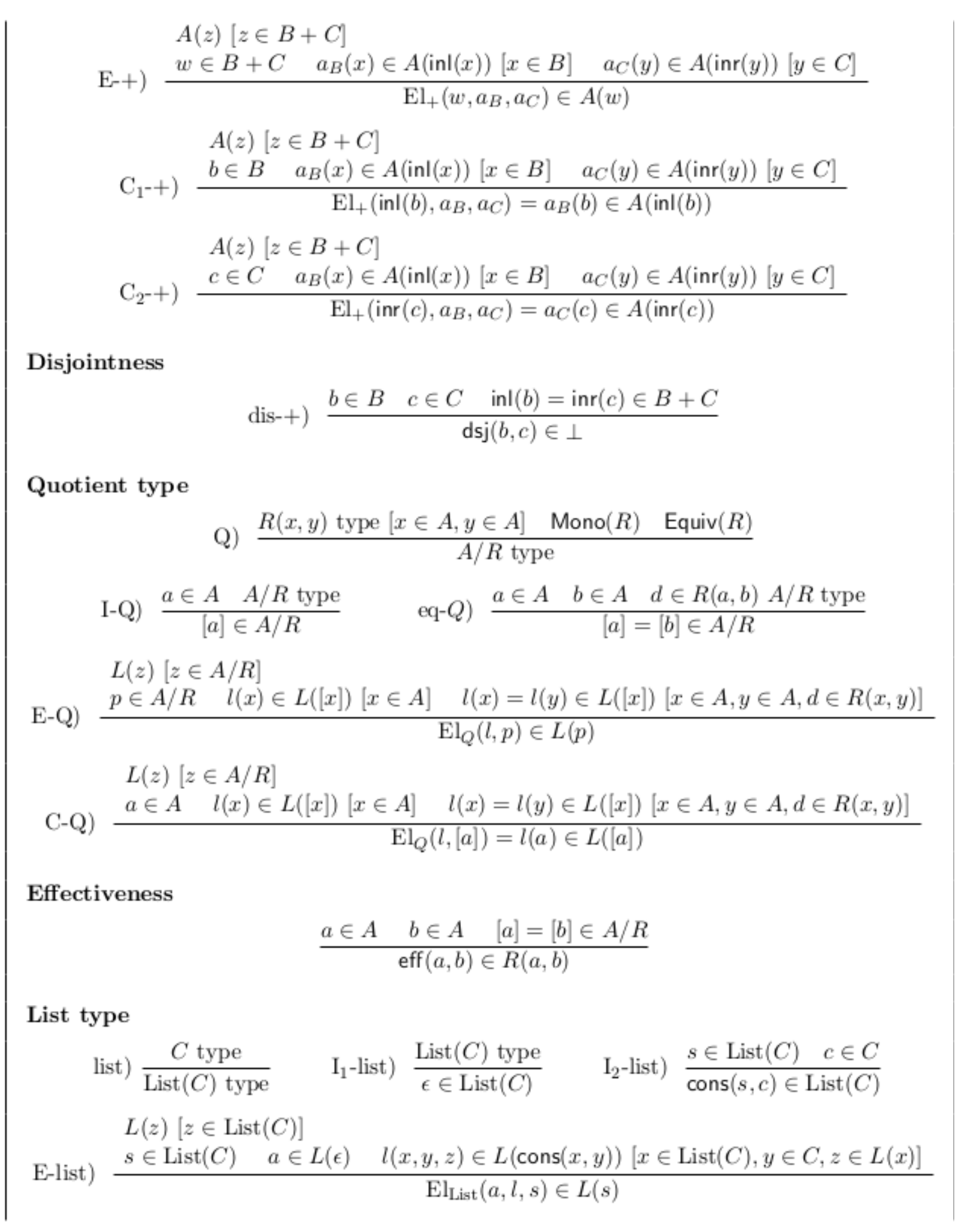}
  \vspace*{\floatsep}
    \includegraphics[width=14cm]{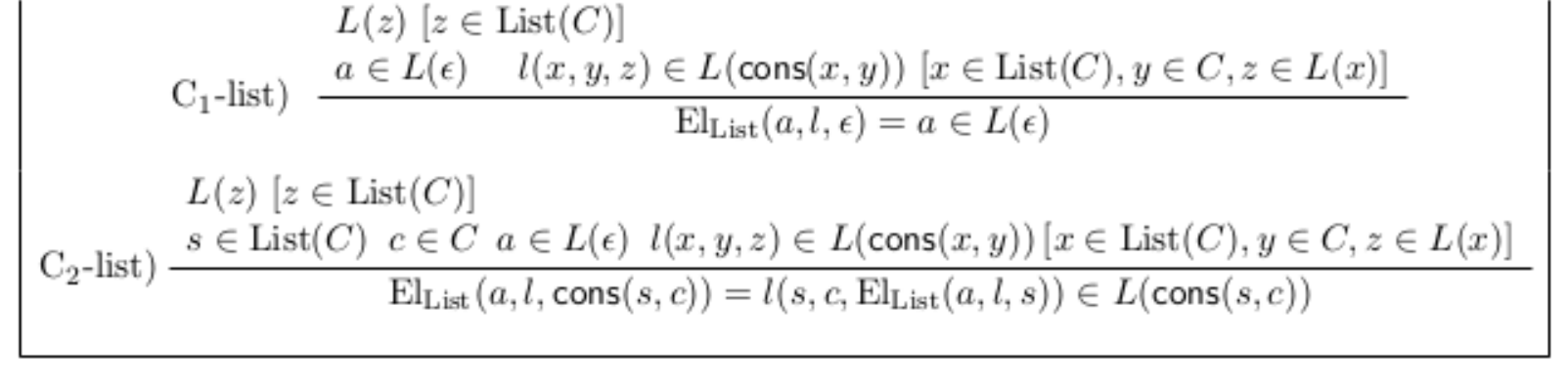}
    \caption{Reproduced from \cite{maietti03}}
\end{figure}

\pagebreak

The basic type theory for arithmetic universes $\mathcal{A}u$ is the dependent type theory that consists of all the above type constructors. It is possible to extend the basic type theory $\mathcal{A}u$ to something larger.

\begin{defn}
A \emph{theory of $\mathcal{A}u$} is a typed calculus $\mathcal{T}$ that consists of the type constructors of $\mathcal{A}u$  containing possible additional type judgments $\Gamma \vdash A \; \text{Type}$, term judgments $\Gamma \vdash a:A$, type equality judgments $\Gamma \vdash A=B$ and term equality judgments $\Gamma \vdash a=_Aa'$. There are some obvious restrictions: $\Gamma \vdash a:A$ can only be added if $\Gamma \vdash A \; \text{Type}$ is derivable in $\mathcal{T}$, $\Gamma \vdash A=B$ if $\Gamma\vdash A \; \text{Type}$ and $\Gamma \vdash B \; \text{Type}$ are derivable  in $\mathcal{T}$, and $\Gamma \vdash a=_Aa'$ if $\Gamma \vdash a:A$ and $\Gamma \vdash a':A$ are derivable in $\mathcal{T}$.
A morphism of theories $\mathcal{T} \to \mathcal{T'} $ is an assignment of judgements of $\mathcal{T}$ to the judgements of $\mathcal{T}'$ which preserve the rules. 
\end{defn}

\begin{rmk}
The internal language of an arithmetic universe has been mentioned, but what does this mean exactly? Roughly, it means the following: given an arithmetic universe $\mathcal{U}$ we may interpret types in $\mathcal{T}_{AU}$ as objects in $\mathcal{U}$ and types in context in its slice categories $\mathcal{U}/c$ (where $c$ is the context). Operations like sum and product of types in the type theory correspond to sum and product in the category $\mathcal{U}$.
An operation like the existence operator $\exists$ is a little more involved, using the image factorisation that an arithmetic universe possesses by virtue of it being a 'regular' category. 
\end{rmk}

\begin{ex}
Let $\mathcal{C}$ be any category. Let $\mathcal{T}_{\mathcal{C}}$ be the theory  which is obtained by adding to the calculus of $\mathcal{A}u$ one closed type (i.e. in which the context is empty) $\vdash A \; \text{Type}$ for every object $A$ of $\mathcal{C}$, a term $x:A \vdash f(x):B$ for every morphism $f:A \to B$ in $\mathcal{C}$, a type equality $A=B$ type if $A$ and $B$ are the same objects in $\mathcal{C}$ and a term equality $x:A \vdash f(x)=g(x):B$ if $f$ and $g$ are the same morphisms in $\mathcal{C}$. 
\end{ex}
\begin{rmk}
One should be careful to distinguish Type Theory in general, and specific type theories.
In that vein, be aware that there is a difference between $\mathcal{T}_{AU}$ and $T_{\mathcal{U}}$. Both are often called the 'internal language'. Roughly, the latter regards the arithmetic universe $\mathcal{U}$ as a distinct mathematical universe in which one may do mathematics, while the former concerns constructions that makes sense for any Arithmetic Universe. On the other hand they are related: $T_{\mathcal{U}}$ is build from $\mathcal{T}_{AU}$ by adding certain types, terms, and equalities --- but no new rules!
In short, $T_{\mathcal{U}}$ concerns mathematics internal to a given mathematical universe $\mathcal{U}$ while $\mathcal{T}_{AU}$ is the language that is used between different mathematical universes.
\end{rmk}

\begin{defn}
Let $\mathcal{T}$ be a theory in the above sense. The \emph{syntactic category of $\mathcal{T}$} is a category $\mathcal{C}_{\mathcal{T}}$ with as objects the closed types $\vdash A \; \text{Type}$ of $\mathcal{T}$. If $\vdash A \; \text{Type}$ and $\vdash B \; \text{Type}$ are two closed types, then a morphism from $\vdash A \; \text{Type}$ to $\vdash B \; \text{Type}$ is defined as a term $x:A \vdash f(x):B$, where two morphisms $x:A \vdash f(x):B$ and $x:A \vdash g(x):B$ are considered to be equal if $x:A \vdash f(x)=g(x):B$ is derivable. Composition is given by substitution and the identity is represented by $x:A \vdash x:A$. 
\end{defn}
Let $\mathcal{T}$ be a theory of $\mathcal{A}u$. Then $\mathcal{C}_{\mathcal{T}}$ is an arithmetic universe.

\begin{rmk}[Syntax-Semantics adjunction] We will focus on the initial object $\mathcal{U}_{in}$ of the category of Arithmetic Universes $\mathcal{AU}$. Initiality of the model will mean roughly that $\mathcal{T}_{AU}=\mathcal{T}_{\mathcal{U}_{in}}$; one also says that $\mathcal{U}_{in}$ is the \emph{syntactic category} for the type theory $\mathcal{T}_{AU}$.  
Let $\mathcal{AU}$ denote the category of arithmetic universes, and morphisms preserving all structure, let $\mathcal{T}_{AU}-Type$ denote type theories over $\mathcal{T}_{AU}$ with maps interpretations preserving structure up to isomorphism --- we will explain this in more detail later. We have an 'syntax-semantics' adjunction
\[
\begin{tikzcd}
\mathcal{T}_{AU}-Type\ar[r,bend left,"Syn",""{name=A, below}] & \mathcal{AU}\ar[l,bend left,"Lan",""{name=B,above}] \ar[from=A, to=B, symbol=\dashv]
\end{tikzcd}
\]
where the $Lan$-functor produces the internal language $T_{\mathcal{U}}$ of an arithmetic universe $\mathcal{U}$ and the $Syn$-functor the syntactic category $\mathcal{C}_\mathcal{T}$ for a given $\mathcal{T}_{AU}$-type theory $T$. In this case, the adjunction is in fact an equivalence.
\end{rmk}
Given a theory $\mathcal{T}$ we may construct its syntactic category $Syn(\mathcal{T})$. In our case we will take $\mathcal{T}=\mathcal{T}_{AU}$. This type theory can encode primitive recursive arithmetic.

\begin{thm}
The syntactic category coincides with Joyal's construction: $Syn(\mathcal{T}_{AU})\cong \mathcal{U}_0 := Pred(\Sigma_0)_{ex/reg}$.
\end{thm}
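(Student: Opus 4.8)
The plan is to identify both sides as the initial object of the category $\mathcal{AU}$ of arithmetic universes and structure-preserving functors (in the appropriate, possibly bicategorical, sense, since such functors are only required to preserve the relevant limits, colimits and list-objects up to isomorphism), and then to invoke uniqueness of initial objects. The right-hand side $Pred(\Sigma_0)_{ex/reg}$ has already been shown above to be \emph{the} initial arithmetic universe, so the entire content of the statement is to prove that $Syn(\mathcal{T}_{AU}) = \mathcal{C}_{\mathcal{T}_{AU}}$ is initial; that it is an arithmetic universe at all has also already been recorded.

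First I would make precise the observation, sketched in the remark on the syntax--semantics adjunction, that $\mathcal{T}_{AU}$ (the theory of $\mathcal{A}u$ with no extra judgments) is the initial object of the category $\mathcal{T}_{AU}$-$\mathrm{Type}$ of theories of $\mathcal{A}u$: by definition a theory of $\mathcal{A}u$ is obtained from the bare calculus of $\mathcal{A}u$ by adjoining further type, term and equality judgments, so the empty extension $\mathcal{T}_{AU}$ admits exactly one morphism of theories into any $\mathcal{T}$ --- every rule is sent to itself, and there is nothing else to send. Applying the functor $Syn$ from the adjunction $Syn \dashv Lan$ --- which, being one half of an equivalence, preserves initial objects --- then gives that $Syn(\mathcal{T}_{AU})$ is initial in $\mathcal{AU}$. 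Unwinding the adjunction, a structure-preserving functor $Syn(\mathcal{T}_{AU}) \to \mathcal{U}$ is the same datum as a morphism of theories $\mathcal{T}_{AU} \to T_{\mathcal{U}} = Lan(\mathcal{U})$, i.e.\ an interpretation of the generic arithmetic-universe type theory in the internal language of $\mathcal{U}$; and there is precisely one of these because $T_{\mathcal{U}}$ is built from $\mathcal{T}_{AU}$ by adjoining extra types, terms and equalities but no new rules.

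For readers who would rather not take the equivalence $Syn \dashv Lan$ as a black box, the same conclusion is reached directly. One defines an interpretation functor $I \colon Syn(\mathcal{T}_{AU}) \to \mathcal{U}$ by recursion over the construction of types and terms: a closed type is sent to an object, a type in context to an object of a slice, and the type constructors ($+$, product, dependent sum, list types, quotient types, the extensional identity type) are sent to the stable disjoint coproducts, finite products, dependent sums along display maps, parametrized list objects, stable effective quotients of monic equivalence relations, and equalizers that the axioms of an arithmetic universe provide. One checks that each judgmental equality of $\mathcal{T}_{AU}$ maps to a genuine equality in $\mathcal{U}$ (so that $I$ descends to the syntactic category) and that substitution is respected (functoriality). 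Uniqueness up to unique isomorphism is then an induction over the syntax: any other structure-preserving functor agrees with $I$ on each constructor up to a canonical isomorphism, and these isomorphisms are forced to cohere, exactly as in the corresponding statements for essentially-algebraic theories and their classifying categories.

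The main obstacle is coherence. The categorical operations in a general arithmetic universe $\mathcal{U}$ are pinned down only up to isomorphism, whereas substitution in the type theory is strictly functorial, so to get an honest functor $I$ one must either strictify --- splitting the fibration of contexts over $\mathcal{U}$ in the style of Hofmann --- or work explicitly $2$-categorically and carry the coherence isomorphisms throughout. A more routine, but lengthy, point is to verify clause by clause that Maietti's rules reproduced in the figures above correspond exactly to the universal properties built into the definition of an arithmetic universe --- in particular that the extensional identity type matches the diagonal/equalizer and that the quotient-type rules match stable effective quotients of monic equivalence relations. Once both points are settled, $Syn(\mathcal{T}_{AU})$ is initial, and the isomorphism $Syn(\mathcal{T}_{AU}) \cong Pred(\Sigma_0)_{ex/reg}$ follows at once from uniqueness of the initial object.
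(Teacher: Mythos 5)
Your argument is correct and follows the route the paper intends: the paper itself gives no proof beyond citing Theorem~6.6 of Maietti, whose content is precisely that $Syn(\mathcal{T}_{AU})$ is initial in $\mathcal{AU}$, so that the isomorphism with $Pred(\Sigma_0)_{ex/reg}$ (already identified as the initial arithmetic universe) follows from uniqueness of initial objects. Your identification of the two real technical burdens --- the strictness/coherence problem in defining the interpretation functor, and the clause-by-clause matching of Maietti's rules to the universal properties of an arithmetic universe --- is exactly where the cited reference spends its effort.
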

\begin{proof}
Theorem 6.6 of \cite{maietti03}.
\end{proof}
Given a sentence $\phi$ in the language of $\mathcal{T}$ we may ask whether $\mathcal{T}$ proves $\phi$. The sentence $\phi$ also appears as a subobject of $1$ in $Syn(\mathcal{T})$. Its provability is exactly the assertion that it is the maximal subobject of $1$. This motivates the following definition. 
\begin{defn}
An arithmetic universe $U$ is \emph{consistent} if $0\in \mathcal{P}1$ and $1 \in \mathcal{P}1$ are distinct. A theory $U$ is \emph{complete} if given $u\in \mathcal{P}1$, we have $u=0$ or $u=1$ [in the meta-theory].  
\end{defn}
\begin{rmk}
A peculiarity of the above definition is that syntactic completeness is a sort of Boolean property of the syntactic category $Syn(\mathcal{T})$. Indeed, this is already apparent in the classical formulation of syntactic completeness as the existence of a proof $\phi$ or its negation $\neg \phi$ for all sentences $\phi$ in the theory. The above formulation brings intuitionistic aspects of the provability predicate in direct contact with the constructive nature of the internal languages of categories. 
\end{rmk}
\begin{remark}
There are traditionally \emph{two} notions of both completeness and consistency, a semantic one and a syntactic one. 
A theory $\mathcal{T}$ is semantically consistent \footnote{This is sometimes also called satisfiability of $\mathcal{T}$.} if it has a model $M$. A theory is semantically complete\footnote{This is sometimes called validity of $T$.} if for any formula $\phi$ in the language of $\mathcal{T}$ it is provable if and only if it is true in all models.
A theory $T$ is syntactically consistent if it does not prove a contradiction, i.e. if it does not derive the falsum $\bot$. A theory is syntactically complete if for any $\phi$ either $\mathcal{T}$ proves $\phi$ or it proves its negation $\neg \phi$.
The semantic side and the syntactic side are often conflated, but it is important to keep them distinct. The semantic side always refers to a class of models $S=\{M_i\}_{i\in I}$ for the theory $T$, while the syntactic theory only refers to the theory. 
The Godel Completeness theorem states that classical first order theories are semantically complete with respect to Set-models. Logics that are not classical first-order often fails to have enough Set-based models.  In categorical logic one works instead with category-based models. Despite being of indisputable interest, semantic notions of completeness and consistency will not occupy us here.

The first G\"odel Incompleteness Theorem states that a recursively enumerable theory $\mathcal{T}$ that is consistent and can encode arithmetic is syntactically incomplete. 
The Second Incompleteness Theorem states if a recursively enumerable theory $\mathcal{T}$ that can encode a weak fragment of arithmetic \emph{proves its own consistency} then in fact it is \emph{not consistent}. 
\end{remark}

\section{The G\"odel Incompleteness Theorems}
In this section we will give a proof of G\"odel's second Incompleteness Theorem using arithmetic universes.

\begin{defn} Let $P$ be an object of a category $C$. We say $P$ is \emph{projective} if given any epimorphism $S \twoheadrightarrow T$ we have a lift
\[
\begin{tikzcd}
&S \arrow[d]\\
P \arrow[ur, dotted] \arrow[r, twoheadrightarrow]& T 
\end{tikzcd}
\]
\end{defn}
At this point a translation of  Cantor's Diagonal argument into categorical terms is given for motivation and later comparison. 
\begin{thm}[Cantor]
Let $\mathcal{E}$ a topos in which $1$ is projective. If there exists an enumeration $f:A \twoheadrightarrow \mathcal{P} A $ , then  $\mathcal{E}$ is degenerate.
\end{thm}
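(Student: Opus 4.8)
The plan is to run Cantor's diagonal argument internally in the topos $\mathcal{E}$, using the hypothesis that $1$ is projective to convert the surjection $f$ into an honest "evaluation" that we can diagonalize against. First I would recall that in a topos $\mathcal{E}$ the power object $\mathcal{P}A$ classifies subobjects of $A$, equivalently morphisms $A \to \Omega$; so an epimorphism $f: A \twoheadrightarrow \mathcal{P}A$ is, by transposition, a morphism $\tilde{f}: A \times A \to \Omega$, and surjectivity of $f$ says that every morphism $A \to \Omega$ arises as $\tilde{f}(a, -)$ for some generalized element $a$ of $A$ — at least after pulling back along epis. The key step is to form the "diagonal" map $\delta = \tilde{f} \circ \Delta_A : A \to \Omega$ (where $\Delta_A : A \to A \times A$ is the diagonal), then negate it: $\neg \delta : A \to \Omega$, using the internal negation $\neg : \Omega \to \Omega$ available in any topos.

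Next I would use surjectivity of $f$ together with projectivity of $1$ to produce an actual global element $a_0 : 1 \to A$ with $\tilde{f}(a_0, -) = \neg\delta$. Concretely: the subobject $\neg\delta \hookrightarrow A$ corresponds to a global element $1 \to \mathcal{P}A$; since $f$ is epic and $1$ is projective, this global element lifts along $f$ to a global element $a_0 : 1 \to A$ with $f(a_0) = \ulcorner \neg\delta \urcorner$, i.e. $\tilde{f}(a_0, x) = \neg\delta(x)$ for all (generalized) $x : X \to A$. Now instantiate $x = a_0$: on the one hand $\tilde{f}(a_0, a_0) = \delta(a_0)$ by definition of $\delta$; on the other hand $\tilde{f}(a_0, a_0) = \neg\delta(a_0)$ by the defining property of $a_0$. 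Hence $\delta(a_0) = \neg\delta(a_0)$ as global elements $1 \to \Omega$, which forces $\top = \bot$ in $\mathrm{Sub}(1)$, i.e. the subobjects $0$ and $1$ of $1$ coincide; this is precisely the statement that $\mathcal{E}$ is degenerate.

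The main obstacle I expect is handling the "elementhood" bookkeeping cleanly — making precise in what sense $\tilde{f}(a, x)$ behaves like membership $x \in f(a)$, and ensuring the chain of equalities really can be evaluated at the single global element $a_0$ rather than only at generalized elements. The cleanest route is to argue entirely in the internal (Mitchell–Bénabou) language of $\mathcal{E}$: there the surjectivity of $f$ reads $\forall S \in \mathcal{P}A.\, \exists a \in A.\, f(a) = S$, and projectivity of $1$ is exactly what lets us discharge the $\exists$ with a genuine global witness for the particular $S = \{x \in A \mid \neg (x \in f(x))\}$. Then the contradiction $a_0 \in f(a_0) \iff \neg(a_0 \in f(a_0))$ is immediate, and one only has to note that a topos satisfying $P \iff \neg P$ for some $P$ of the form above is degenerate. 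I would also remark, for the later comparison flagged in the text, that this is the template that the Gödel argument will mirror, with $\mathcal{P}A$ and the membership relation replaced by a "provability" object and the provability predicate.
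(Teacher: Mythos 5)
Your proposal is correct and follows essentially the same route as the paper: form the diagonal subobject $\{a : a \notin f(a)\}$ (the paper builds the negation in by pulling back $\mathit{false}$ rather than post-composing with $\neg$), use projectivity of $1$ to lift its name along the epi $f$ to a global element, and derive $P = \neg P$, hence degeneracy. The only differences are presentational — the paper works with $\mathcal{P}1$ in place of $\Omega$ and verifies the final collapse via an equalizer/pullback computation rather than your direct lattice argument in $\mathrm{Sub}(1)$.
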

\begin{proof}
 Indeed, suppose $f$ exists, form the pullback 
\[
\begin{tikzcd}
D \arrow[d,hook] \arrow[dr, phantom, "\fiberproduct", very near start] \arrow[rrr]&&& 1 \arrow[d,hook,"false"]\\
A \arrow[r,"\Delta_A"]&A \times A \arrow[r,"1_A \times f"] & A \times \mathcal{P}A \arrow[r,"eval"] &\mathcal{P}1
\end{tikzcd}
\]
and consider $name{D}: 1 \to \mathcal{P}A$. By definition of projectivity of $1$ there is a lifting $a$ of $\name{D}$ 
\[
\begin{tikzcd}
&A \arrow[d]\\
1 \arrow[ur, dotted, "a"] \arrow[r, "nameD"]& \mathcal{P}A 
\end{tikzcd}
\]
then $\ext{a\in D}$ is defined by the pullback 
\[
\begin{tikzcd}
\ext{ a \in D} \arrow[r,hook]\arrow[d,hook]  \arrow[dr, phantom, "\fiberproduct", very near start]  & D \arrow[d, hook]\\
1 \arrow[r, "a"]  &A
\end{tikzcd}
\]
the composite pullback 
\[
\begin{tikzcd}
\ext{a \in D} \arrow[r,hook]  \arrow[d,hook] \arrow[dr, phantom, "\fiberproduct", very near start] & D\arrow[d,hook]  \arrow[dr, phantom, "\fiberproduct", very near start]  \arrow[rrr] &&& 1 \arrow[d,hook,"false"]\\
1 \arrow[r, "a"]& A \arrow[r,"\Delta_A"]& A\times A \arrow[r,"1_A\times f"]& A \times \mathcal{P} \arrow[r,"eval"]& \mathcal{P}1 
\end{tikzcd}
\]
is then just 
\[
\begin{tikzcd}
\ext{ a \in D} \arrow[dr, phantom, "\fiberproduct", very near start] \arrow[d,hook]\arrow[r,hook]&1 \arrow[d,hook,"false"]\\
1 \arrow[r, "\name{\ext{ a\in D}}"]& \mathcal{P}1.
\end{tikzcd}
\]
It follows that 
\[
\begin{tikzcd}
\ext{ a \in D} \arrow[r,hook] &1 \arrow[r, shift left=1.5ex,"\name{0}"] \arrow[r,swap, "\name{1}"]& \mathcal{P}1
\end{tikzcd}
\]
is an equalizer. Indeed, suppose 
\[
\begin{tikzcd}
Z \arrow[r,"!_Z"] &1 \arrow[r, shift left=1.5ex,"\name{0}"] \arrow[r,swap, "\name{1}"]& \mathcal{P}1
\end{tikzcd}
\]
is a commutative diagram.  Then we have, since $Sub_{\mathcal{E}}(Z) = \hom_{\mathcal{E}}(Z, P1)$ is a poset with $\name{0}\ \circ\ !_Z$ as bottom and $\name{1}\ \circ\ !_Z$ as top, that $\name{0}\ \circ\ !_Z \leq \ext{ a \in D} \leq \name{1}\ \circ\ !_Z = \name{0}\ \circ\ !_Z$. Thus $\ext{ a \in D} = \name{1}\ \circ\ !_Z = \name{0}\ \circ\ !_Z$. Then, by one of the two pullbacks above, $!_Z$ factors through $\ext{ a \in D}$. 
It then follows that $ (\ext{a \in D} \hookrightarrow 1) \simeq (0\hookrightarrow 1) $.

Next, $\ext{ a \in D}$ is also the pullback of $false$ along $false$, so $(\ext{ a \in D} \hookrightarrow 1 \simeq (1 \hookrightarrow 1)$ as well; but then $0=1$ .
\end{proof}

\begin{defn} Let $U$ be an arithmetic universe. An \emph{AU-object} $\mathcal{E}$ is an internal category 
\[
\mathcal{E}_1\times_{\mathcal{E}_0}\mathcal{E}_1 \xrightarrow{\circ} \mathcal{E}_1 \rightrightarrows \mathcal{E}_0
\]
such that internally $\mathcal{E}$ is a list-arithmetic pretopos. 
Arithmetic universe objects (AUO's) and their internal functors form a (large, external) category $AUO_{\mathcal{U}}$; hence it makes sense to talk about limits, colimits, initiality, et cetera of AU-objects. 
\end{defn}
\begin{defn}
A sketch is quadruple $K=(G, U, D,C)$ where $G$ is a graph, $U: G_0 \to G_1$ is a function, $D$ is a collection of diagrams in $G$ and $C$ is a collection of cones in $G$.
A sketch morphism $T: K \to K'$ is a graph homomorphism $T: G \to G'$
such that $(i) T \circ U= U'\circ T$, (ii) every diagram in $D$ is mapped to a diagram in $D'$ and (iii) every cone in $C$ is mapped to a cone in $C'$.
\end{defn}
\begin{defn}
If $\mathcal{C}$ is a category then the underlying sketch $K_{\mathcal{C}}=(G,U,D,C)$ is given as
\begin{itemize}
    \item $G$ is the underlying graph of $\mathcal{C}$
    \item $U$ is the map which picks out the identity arrows of $\mathcal{C}$
    \item  $D$ is the collection of all commutative diagrams of $\mathcal{C}$
    \item $C$ is the collection of all limit cones of $\mathcal{C}$.
\end{itemize}
\end{defn}
\begin{defn}
 A model for a sketch $K$ in a category $\mathcal{C}$ is a morphisms of sketches from $K$ to the underlying sketch $K_{\mathcal{C}}$ of $\mathcal{C}$.
\end{defn}
Note that the models $Mod(K,\mathcal{C})$ of $K$ in $\mathcal{C}$ form a category.
\begin{lem} There is a sketch $K_{Skolem}$ of Skolem categories.
\end{lem}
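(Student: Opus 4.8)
The plan is to build $K_{Skolem}$ by enlarging the classical sketch of categories with finitely many extra nodes, graph arrows, diagrams and cones that reify the \emph{finite products} and \emph{natural numbers object} data as specified operations. The crucial observation is that a Skolem category, once a choice of terminal object, of binary products, and — above all — of a recursion witness for the NNO has been fixed, is an \emph{essentially algebraic} structure: every operation is total on a finite limit of previously introduced sorts, and every axiom is an equation between such operations. Essentially algebraic theories are exactly those presented by finite-limit sketches, so a sketch $K_{Skolem}$ with the desired models must exist; the work is to exhibit it. I would start from the sketch of categories $K_{\mathrm{Cat}}$: nodes $C_0$ (objects), $C_1$ (arrows), $C_2$, $C_3$ (composable pairs and triples), a distinguished node $\mathbf{1}$ carrying the empty cone, graph arrows $s,t\colon C_1\to C_0$, $\mathrm{id}\colon C_0\to C_1$, $m\colon C_2\to C_1$, the projections out of $C_2,C_3$, the function $U$ selecting $\mathrm{id}$, cones forcing $C_2,C_3$ to be the displayed pullbacks, and diagrams for the unit and associativity laws; its models in $\mathcal{C}$ are exactly internal categories.

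Onto $K_{\mathrm{Cat}}$ I would graft the finite-product layer. Add an arrow $u\colon\mathbf{1}\to C_0$ and an operation $e\colon C_0\to C_1$ with the evident diagrams fixing its source and target, together with the uniqueness rule for the terminal object (handled as described below). Add a node $C_0\times C_0$ with a product cone and its two projections, an operation $\otimes\colon C_0\times C_0\to C_0$, arrows selecting the two projection morphisms of a product, a node $\mathrm{Span}$ of pairs of arrows with a common domain — a finite limit over $C_0$ assembled from copies of $s$ and $t$ — an operation $\langle-,-\rangle\colon\mathrm{Span}\to C_1$, and diagrams expressing the two $\beta$-rules $\pi_i\circ\langle h,k\rangle=(\cdots)$ and the $\eta$-rule $\langle\pi_1\circ f,\pi_2\circ f\rangle=f$. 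At this point a model is precisely an internal category with chosen finite products.

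Finally I would add the NNO layer: an arrow $n\colon\mathbf{1}\to C_0$, arrows $z,\sigma\colon\mathbf{1}\to C_1$ with diagrams pinning down that $z$ goes from the terminal object to $n$ and $\sigma$ from $n$ to $n$, a node $\mathrm{RecData}$ of pairs $(a,g)$ with $a\colon A\to X$ and $g\colon X\to X$ — again a finite limit over $C_0$ of copies of $s,t$ — and an operation $R\colon\mathrm{RecData}\to C_1$ whose value is the recursed map associated to $(a,g)$ as in Definition~\ref{recursion}, subject to diagrams encoding the commuting triangle and square of that definition. The uniqueness clause is the one subtle point: a sketch diagram can only assert equality of two \emph{already existing} composites, so one cannot write down $\forall f'\,((\cdots)\Rightarrow f'=R(a,g))$ directly. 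Instead I would adjoin a node $\mathrm{RecSol}$ of triples $(a,g,f')$ with $f'$ satisfying the two recursion equations for $(a,g)$ — a finite limit built on top of $\mathrm{RecData}$ and a copy of $C_1$ — with its projection $p\colon\mathrm{RecSol}\to\mathrm{RecData}$; the diagrams just added supply a tautological section $j\colon\mathrm{RecData}\to\mathrm{RecSol}$, $j(a,g)=(a,g,R(a,g))$, and a single further diagram $j\circ p=\mathrm{id}_{\mathrm{RecSol}}$ (the relation $p\circ j=\mathrm{id}$ being automatic) forces $p$ to be invertible, which is exactly uniqueness of the recursed map. The same ``solution node'' device takes care of the uniqueness rules for products and for the terminal object.

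Writing $K_{Skolem}$ for the sketch thus assembled, one verifies by a straightforward unwinding of definitions that a model of $K_{Skolem}$ in $\mathcal{C}$ is precisely a Skolem-category object in $\mathcal{C}$ and that a morphism of models is precisely an internal finite-product-and-NNO-preserving functor; being a finite-limit sketch, $K_{Skolem}$ moreover possesses an initial model. The main obstacle is exactly the encoding of the existential-uniqueness rules just discussed, and the solution-node trick is its resolution; everything else is bookkeeping of auxiliary limit nodes. We note in passing that cutting $K_{Skolem}$ down so as to sketch Skolem \emph{theories}, in which every object must be a finite power of $n$, would require a generation / `no junk' condition that is not finite-limit-sketchable, and is not needed for the present lemma.
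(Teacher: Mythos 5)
Your construction is correct and is essentially the argument the paper simply delegates to Lemma 7.12 of \cite{morrison96}: one writes down the finite-limit sketch of the essentially algebraic theory of categories equipped with chosen finite products and a chosen recursion operator, using auxiliary limit nodes for the partial-domain operations and the section--retraction (``solution node'') device to encode the unique-existence clauses. Your closing caveat that Skolem \emph{theories} (as opposed to Skolem categories) are not finite-limit-sketchable is also the right observation, and is exactly why the paper's subsequent construction obtains the initial internal Skolem theory via the free-model theorem rather than directly.
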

\begin{proof}
See Lemma 7.12 in \cite{morrison96}.
\end{proof}
\begin{thm}
Let $\mathcal{E}$ be any arithmetic universe. Internally, we may construct the initial arithmetic universe object $\mathbb{U}_0(E)$.
\end{thm}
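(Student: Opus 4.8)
The plan is to carry out, stage by stage, the \emph{same} construction that produced the external initial arithmetic universe $\mathcal{U}_0 = (\Pred(\Sigma_0))_{ex/reg}$, but now inside the internal logic of $\mathcal{E}$. The point is that every ingredient of that construction — finite limits, stable disjoint coproducts, stable effective quotients, list objects, the parametrized $\nno$, and free category objects over graph objects — is available in $\mathcal{E}$, is stable under reindexing along slices, and is interpretable for internal category objects. Moreover the whole recipe is \emph{arithmetic}: no impredicative or infinitary step occurs, and the verifications behind it (Maietti's Propositions 4.7, 4.10 and Theorem 6.2 of \cite{maietti10a}) are constructive and predicative, so they remain valid when read off in the internal logic of an arbitrary arithmetic universe. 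Thus the strategy is simply: replay the recipe internally, obtain an internal category object $\mathbb{U}_0(\mathcal{E})$, and internalise the proofs that it is a list-arithmetic pretopos and that it is initial.

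Concretely I would proceed in three steps. First, build the internal Skolem theory object $\Sigma_0(\mathcal{E})$: by the Lemma above there is a sketch $K_{Skolem}$ of Skolem categories, and using free category objects over internal graph objects together with the list/recursion structure of $\mathcal{E}$ one constructs its initial model as an internal category object — its object of objects is the list object on $1$ (the powers $\nno^k$), its object of arrows is a quotient of a list object presenting formal composites modulo the recursively enumerated Skolem equations. Second, form $\Pred(\Sigma_0(\mathcal{E}))$ internally: the object of objects is the subobject of internal $\Sigma_0(\mathcal{E})$-endomorphisms $P$ of $\nno$ with $P\ast P = P$, and the object of arrows is the effective quotient of the object of $\Sigma_0(\mathcal{E})$-arrows by the relation $P\ast f = P\ast g$; both are definable from finite limits and quotients alone, and the bounded-minimisation operator $\mu$ used for internal image factorisations is primitive recursive, hence available via the list objects of $\mathcal{E}$. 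Third, apply the exact/regular completion internally — objects are pairs of an object with an internal equivalence relation, morphisms are equivalence classes of compatible maps — again using only finite limits and quotients. Set $\mathbb{U}_0(\mathcal{E})$ to be the resulting internal category object, and verify that it is internally a list-arithmetic pretopos by re-reading the cited propositions inside the internal logic of $\mathcal{E}$, where all the relevant structure is present and stable.

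It remains to establish initiality in $AUO_{\mathcal{E}}$. Given an AU-object $\mathcal{A}$, I would produce the unique internal AU-functor $\mathbb{U}_0(\mathcal{E}) \to \mathcal{A}$ by the same three-fold extension that works externally: the internal initial model $\Sigma_0(\mathcal{E})$ maps uniquely to the internal Skolem theory underlying $\mathcal{A}$ by its universal property as initial model of $K_{Skolem}$; this extends uniquely along $\Sigma_0(\mathcal{E}) \to \Pred(\Sigma_0(\mathcal{E}))$, which is the free finite-limit completion among Skolem categories; and it extends uniquely along the exact/regular completion by its universal property. Each of these universal properties is witnessed by an explicit recursive construction, so each internalises. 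An alternative and perhaps cleaner route: encode the syntax of $\mathcal{T}_{AU}$ directly as primitive-recursive data (finite lists of tokens), which is representable in any arithmetic universe through list objects, take $\mathbb{U}_0(\mathcal{E})$ to be the internal syntactic category $Syn(\mathcal{T}_{AU})$, and read initiality off the internalised syntax–semantics equivalence together with the identification $\mathcal{U}_0 \cong Syn(\mathcal{T}_{AU})$.

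The main obstacle I expect is the first step: producing $\Sigma_0(\mathcal{E})$ as an internal category object with the correct \emph{internal} universal property. Externally one has an explicit grip on the hom-sets of $\Sigma_0$; internally one must build the object of arrows as a quotient of a list object and then check that the resulting internal category really is the internal initial model of $K_{Skolem}$, i.e. that its internal universal property holds — this is where the interplay of free category objects on graphs, list objects and effective quotients has to be managed carefully. A secondary nuisance is the $2$-categorical bookkeeping: ``AU-functor'' means preservation of all the pretopos-plus-list structure up to canonical isomorphism, so uniqueness in the initiality clause must be stated and proved up to the appropriate coherent isomorphism; but since every universal property in play is presented by a genuine recursion rather than a choice, this is bookkeeping rather than a genuine difficulty.
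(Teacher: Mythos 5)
Your proposal follows the same three-stage internalisation as the paper --- internal initial Skolem theory, then $\Pred(-)$ built from equalisers and effective quotients, then the exact/regular completion --- and your description of the predicate stage (predicates as an equaliser cutting out $P\ast P = P$, hom-objects as quotients by $P\ast f = P\ast g$) matches the paper's construction essentially verbatim. The one substantive difference concerns the step you single out as the main obstacle: producing $\Sigma_0(\mathcal{E})$ as an internal category with the correct internal universal property. You propose to build it by hand (object of objects as a list object on $1$, arrows as a quotient of a list object by the Skolem equations) and then verify initiality directly; the paper instead dispatches this in one stroke by invoking the free model theorem for sketches (Theorem~29 of \cite{palmgrenvickers2007}) applied to the sketch morphism $K_0 \to K_{Skolem}$, obtaining the internal initial Skolem theory as $L(\bullet)$ for the free left adjoint $L$ to the induced forgetful functor. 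Your hands-on route would work but forces you to re-prove the universal property from the explicit presentation, which is exactly the bookkeeping the free model theorem packages away; on the other hand your explicit description makes visible \emph{why} the construction is primitive recursive, and your alternative route via the internal syntactic category $Syn(\mathcal{T}_{AU})$ is a legitimate shortcut the paper mentions elsewhere but does not use here. Note also that the paper, like you, leaves the internal exact/regular completion and the full verification of initiality as routine (``a similar mess''), so your proposal is if anything more explicit about the initiality clause than the source.
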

This is Theorem 7.13 of \cite{morrison96}. We give a sketch of the proof. 
The nontrivial part is the construction of the internal initial Skolem theory $\Sigma_0'$. This relies fundamentally on the fact that Arithmetic Universes have parameterized list object and hence may implement primitive recursion. 
The reader is warned that the second part of the proof is not terribly enlightening, but may nevertheless give a sense of "what's involved".
\begin{proof} 
We consider the sketch $K_{Skolem}$ and the empty sketch $K_0$. We have a morphism of sketches $K_0 \to K_{Skolem}$ which induces the forgetful functor $K: K_{Skolem}(\mathcal{U}_0) \to K_0(\mathcal{U}_0)$; by the free model theorem [Theorem 29 of \cite{palmgrenvickers2007}] there is a free left adjoint $L: \{\bullet\}=K_0(\mathcal{U}_0)\to K_{Skolem}(\mathcal{U}_0)$. The internal initial Skolem theory is $\Sigma':= L(\bullet)$.

Let the following denote $\Sigma'$
\[
\begin{tikzcd}
\Sigma'_1 \arrow[r,  shift right=1.5ex, "\delta_1"] \arrow[r,shift left=1ex, "\delta_0"] & \Sigma'_0 \arrow[r,"e"] & \Sigma'_1 
\end{tikzcd}
\begin{tikzcd}
\Sigma'_1 \times_{\Sigma'_0} \Sigma_1' \arrow[r, "m"] & \Sigma'_1
\end{tikzcd}
\]
We construct internally $Pred(\Sigma')$. The object of objects of $Pred(\Sigma')$ is all predicates, i.e. the following equaliser
\[
\begin{tikzcd}
&& \Sigma_1'\times_{\Sigma_0'} \Sigma_1' \arrow[dr,"m"] &\\
Pred(\Sigma')_0  \arrow[r,hook]& \Sigma'_1 \arrow[ur,"\Delta"] \arrow[rr,"id"] && \Sigma'_1 
\end{tikzcd}
\]
The arrows from $A$ to $B$ in $Pred(\Sigma')$ are equivalence classes of the set 
\[
\{f: \mathbb{N}_{\mathcal{U}_0}\to \mathbb{N}_{\mathcal{U}_0}| A \leq B \circ f\}
\]
which is internally constructed as the equalizer
\[
\begin{tikzcd}
&& \Sigma_1' \times Pred(\Sigma'_0) \arrow[dr, "\leq"]&\\
X\arrow[r, hook] &  \Sigma_1'\times Pred(\Sigma_0')\times Pred(\Sigma_0') \arrow[ur, "(\pi_2{,} m \circ (\pi_1 {,}\pi_3))"] \arrow[rr,"True"]&&\Sigma'_0  
\end{tikzcd}
\]
Next, define 
\[d_0 : X \hookrightarrow  \Sigma_1'\times Pred(\Sigma_0')\times Pred(\Sigma_0') \xrightarrow{\pi_2} Pred(\Sigma_0')  \]
\[
d_1 : X \hookrightarrow  \Sigma_1'\times Pred(\Sigma_0')\times Pred(\Sigma_0') \xrightarrow{\pi_3} Pred(\Sigma_0')  
\]
We have $f\sim g$ if and only if $A \leq eq(f,g)$. We construct the pullback of $f,g$ such that $f,g$ have the same source and target as
\[
\begin{tikzcd}
Y \arrow[d,"p_1"] \arrow[r, "p_2"] & X \arrow[d, "(d_0{,}d_1)"]\\
X  \arrow[r, "(d_0{,}d_1)"] & Pred(\Sigma_1') \times Pred(\Sigma_1')
\end{tikzcd}
\]
The subset $R$ of $Y$ with $(f,g) \in R$ if and only if $src(f)\leq eq(f,g)$ is build as the equalizer:
\[
\begin{tikzcd}
&& Pred(\Sigma_0') \times Y \times Y  \arrow[r]& Pred(\Sigma_0') \times \Sigma_0' \arrow[dr] & \\
R \arrow[r,hook] & Y \arrow[ur,"(src\circ p_1{,}p_1{,}p_2)"]  \arrow[rrr, "True"]&&& \Sigma_0'
\end{tikzcd}
\]
The object of morphisms $Pred(\Sigma')_1$ is given as the quotient of $R \hookrightarrow Y$.
The exact-regular completion is a similar mess. 
\end{proof}
\begin{remark}
It has become clear that, although quotidian, the need for redoing external constructions internally is a burdensome process. Ideally, one would have access to a device that could make these internal workings completely routine. The internal language provides such a device.
\end{remark}

\begin{defn}
 Given an internal category $\mathbb{C}$ inside a category $D$ with finite limits we may take the externalisation, taking objectwise global sections: $Ext(\mathbb{C})=(Hom_D(1,Ob\mathbb(C)),Hom_D(1,Mor(\mathbb{C})))$ with the obvious maps. We obtain an external category $Ext(\mathbb{C})$.
\end{defn}
\begin{remark}
There is a different kind of externalization that is more common. That is the Grothendieck-externalization $\int \mathbb{C}$ whose objects are $X:I  \to Ob(\mathbb{C})$ and morphisms are diagrams
\[
\begin{tikzcd}
&Mor(\mathbb{C}) \arrow[d, " <cod{,}dom>"]\\
I \arrow[ur, "f"] \arrow[r, " <X{,} Y>"]& Ob(\mathbb{C})
\end{tikzcd}
\]
It is well-known that this gives a 2-functor $\int:\mathbb{C}(U_0) \to Fib/U_0$ from the 2-category of internal categories to fibrations over $U_0$. This functor is in fact fullly faithful, see lemma 2.3.3 of B3 in \cite{Elephant}. This 2-functor sends internal (co)limits to fibered (co)limits.
The simple externalization $Ext(\bullet)$ is simply the fiber over the terminal object, in other words $Ext(\mathbb{C})=\int \mathbb{C}$. That means that if $\mathbb{C}$ has a certain limit or colimit $Ext(\mathbb{C})$ has that (co)limit, and it is moreover stable under pullback. 
\end{remark}
\begin{remark}
There is an important difference between local existence or global existence. Take for example the statement that an internal category $\mathbb{C}$ has an internal terminal object.
Local existence would say that in the Kripke-Joyal semantics $\mathbb{C}\models \exists \, 1:\mathbb{C} \, \textit{such that 1 is a terminal object}$ which is different from the statement that there exists an object $1_{\mathbb{C}}:1_{U_0}\to  \mathbb{C}$ such that $\mathbb{C} \models 1_{\mathbb{C}} \textit{ is a terminal object}$. In the construction we actually get global existence for the objects that we construct and this is key. 
\end{remark}
\begin{prop} Let $E$ be an arithmetic universe equipped with an internal arithmetic universe object $\mathbb{U}$.
The externalisation $Ext(\mathbb{U})$ of an internal arithmetic universe object $\mathbb{U}$ is an (external) arithmetic universe. 
\end{prop}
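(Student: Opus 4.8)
The plan is to reduce each clause of ``list-arithmetic pretopos'' to the corresponding \emph{internal} clause, already known to hold for $\mathbb{U}$ inside $E$, and then transport it along the externalisation using the Grothendieck construction recalled above. Recall that $Ext(\mathbb{U})$ is the fibre over the terminal object $1_E$ of the fibration $\int\mathbb{U}\to E$, and that $\int(-)\colon \mathbf{Cat}(E)\to \mathbf{Fib}/E$ is $2$-fully-faithful and carries internal (co)limits of $\mathbb{U}$ to \emph{fibered} (co)limits of $\int\mathbb{U}$. The underlying mechanism is then: a fibered $J$-(co)limit is by definition computed in each fibre and preserved by every reindexing functor, so the fibre $Ext(\mathbb{U})$ inherits all such (co)limits, \emph{and they are automatically stable} under the reindexings $f^{*}\colon Ext(\mathbb{U})=(\int\mathbb{U})_{1_E}\to (\int\mathbb{U})_{A}$ along $f\colon A\to 1_E$ --- which is exactly the notion of stability from the stability definition. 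So ``stable'' comes for free once the plain property has been established internally.

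Concretely, since by hypothesis $E\models$ ``$\mathbb{U}$ is a list-arithmetic pretopos'', the internal category $\mathbb{U}$ has internal finite limits, internal disjoint coproducts, internal effective quotients of monic equivalence relations, and internal parametrized list objects (and hence a parametrized natural numbers object in the sense of Definition~\ref{recursion}). Each of these is a condition expressed purely in terms of finite limits and finite colimits in $\mathbf{Cat}(E)$, so applying $\int(-)$ and restricting to the fibre over $1_E$ yields: $Ext(\mathbb{U})$ has finite limits and coproducts; the coproducts are disjoint, since disjointness asserts that a certain pullback is initial, a statement about finite limits and finite colimits, hence transported as above; and the quotients of monic equivalence relations exist and are effective, since effectiveness refers only to kernel pairs and coequalizers. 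For the last clause, $Ext(\mathbb{U})$ has parametrized list objects: here the \emph{parametrization} in the definition of parametrized list objects (and in Definition~\ref{recursion}) is precisely what makes the list-object structure a fibered, reindexing-stable one, so it is exactly this data that survives externalisation. This is the technical reason the non-parametrized versions, flagged in the remark after Definition~\ref{recursion}, would not suffice.

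The one genuine subtlety is the gap between \emph{local} and \emph{global} existence flagged in the remark preceding this proposition. The internal statement ``$\mathbb{U}$ has an internal terminal object (equalizers, coproducts, list objects, \dots)'', read naively in Kripke--Joyal semantics, only guarantees the witnessing objects and cones after passing to a cover of $1_E$, which a priori produces (co)limits in some fibre $(\int\mathbb{U})_{A}$ with $A\twoheadrightarrow 1_E$ rather than over $1_E$ itself. The resolution --- and the reason the relevant internal arithmetic universe objects (e.g.\ $\mathbb{U}_0(E)$) are built so as to enjoy \emph{global} existence --- is that the list-arithmetic-pretopos structure on $\mathbb{U}$ is presented by honest global internal operations (internal functors $\mathbb{U}\times\mathbb{U}\to\mathbb{U}$ and internal universal properties), not merely by internal existence assertions. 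Once this is recorded, $\int(-)$ converts those global internal operations into global fibered ones and restriction to $1_E$ completes the argument. The main obstacle I anticipate is exactly this bookkeeping of global-versus-local data together with the verification that the transported structure satisfies the \emph{external} universal properties in $Ext(\mathbb{U})$; the latter are routine diagram chases powered by the $2$-full-faithfulness of $\int(-)$, and I would only sketch them.
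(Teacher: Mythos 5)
Your overall strategy is the same as the paper's: run through the clauses of ``list-arithmetic pretopos'', transport the finite limits and colimits through the Grothendieck construction $\int\mathbb{U}\to E$ (which sends internal (co)limits to fibered ones, hence gives them in the fibre over $1_E$ together with stability), and then deal separately with the clauses whose internal formulation involves quantifiers. You also correctly identify the local-versus-global existence issue as the one genuine subtlety. Up to that point the two arguments coincide.

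The gap is in how you resolve that subtlety. You dispose of it by asserting that the list-arithmetic-pretopos structure is ``presented by honest global internal operations \ldots and internal universal properties.'' That is true of the \emph{data} (the objects $L(A)$ and the structure maps $cons$, $app$ exist as global points because they are given by internal operations), but it is not true of the \emph{universal property}, which is an internal $\forall\exists!$ statement. Externalising it via Kripke--Joyal, the existential only produces the recursor $rec(b,g)$ after passing to a cover $p_2\colon I_2\twoheadrightarrow 1$, so a priori one gets the recursor in the fibre over $I_2$, not over $1_E$. The step you are missing --- and the one piece of real content in the paper's proof --- is the descent argument: the \emph{uniqueness} clause of the universal property forces the two pullbacks of the witness to $I_2\times_1 I_2$ to agree, so the witness descends along the effective epimorphism $I_2\twoheadrightarrow 1$ using effectiveness of quotients in the ambient arithmetic universe $E$. (The same mechanism is why regularity and exactness come for free in the paper: those clauses, once the image factorisation data is in hand, contain no existential quantifier, so specialising to $I_1=1$ suffices.) Declaring the verification of the external universal property to be ``routine diagram chases'' powered by $2$-full-faithfulness of $\int(-)$ defers exactly the step that is not routine; without the uniqueness-plus-descent argument the proof does not close.
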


\begin{proof}
An arithmetic universe is a list-arithmetic pretopos. That means that $\mathbb{U}_0$ is internal category that is internally
\begin{itemize}
    \item Finitely complete \\
By the above remarks on the Grothendieck-externalization $\int$ this is immediate. 
    \item Finite disjoint stable coproducts \\
Similarly. In fact, we have all finite colimits. 
    \item Parameterized list objects\\
We have internal list objects in $\mathbb{U}_0$. As we have global existence of our objects this means that for any object $A:1 \to Ob(\mathbb{U}_0)$ there is a diagram  $(1 \xrightarrow{cons} A \xleftarrow{app} A \times L(A)): \mathbb{U}_0(1)$ such that $\mathbb{U}_0 \models \name{\textit{For all}\, 1 \xrightarrow{c} Y \xleftarrow{f}Y \times A \: \: \exists ! rec(f,g): L(A) \to Y \, \textit{such that the natural diagrams commute}}$.
Let's write that out using the Kripke-Joyal semantics. The above maybe be translated as
\begin{gather*}
\textit{for all} \: \:g_1:I_1 \to 1 \, (B\xrightarrow{c} Y \xleftarrow{f} Y\times g_1^{\ast}A ) : \mathbb{U}_0(I_1) 
\:\:\textit{ there exists}\:\: p_2: I_2 \twoheadrightarrow I_1 \\ \textit{and a unique}\,\, (rec(c,f): p_2^{\ast} \to p_2^{\ast}p_1^{\ast}L(A)):\mathbb{U}_0\quad \textit{such that}
\end{gather*}
\[
I_2 \models
\begin{tikzcd}
p_2^{\ast}g_1^{\ast} 1 \arrow[dr]\arrow[r]& p_2^{\ast}g_1^{\ast} L(A) \arrow[d, "rec(c{,}f)" ]  & \arrow[l] p_2^{\ast}g_1^{\ast}L(A) \times p_2^{\ast}g_1^{\ast} A \arrow[d, " rec(c{,}f)"] \\
&p^2{\ast}Y & \arrow[l] p_2^{\ast}Y \times p_2^{\ast}g_1^{\ast} A
\end{tikzcd}
\]
We may specialise to $I_1=1$. Then we have 
\[
\begin{tikzcd}
I_2 \arrow[d, two heads] \arrow[rr,"rec(f{,}c)"]&& Mor(\mathbb{U}_0) \arrow[d]\\
1 \arrow[rr, "p_2^{\ast}Y\times p_2^{\ast}L(A)"] && Ob(\mathbb{U}_0)\times Ob(\mathbb{U}_0)
\end{tikzcd}
\]
By uniqueness of $rec(c,f)$ it descends to $1$ by effectiveness of quotients in an AU. So we see that internal list objects give us external parameterized list objects!
    \item Regular \\
    which is equivalently the statement that $\mathbb{U}_0$ is (i) finitely complete (ii) has coequalizers by kernel pairs and (iii) coequalizers by kernel pairs are preserved by pullback. The last condition does not involve any existential quantifiers hence specializing to $I_1=1$ yields the regularity of $\int \mathbb{U}_0(1)$.
    \item Exactness \\
    This means that for any given equivalence relation $R\hookrightarrow A \times A$ we have 
    \[
    \mathbb{U}_0\models \name{R\,\:\textit{ is a kernel pair}}
    \]
    i.e. if $A/R$ denotes the coequalizer of $\begin{tikzcd}
A\times A \arrow[r, " p_1", shift left=3] \arrow[r, "p_2", shift right=2] & A
\end{tikzcd}$ then we claim that $\mathbb{U}_0\models \name{R=A\times_{A/R}A}$ which is a simple equality, containing no existence quantifiers hence by the Kripke-Joyal semantics it follows that $Ext(\mathbb{U}_0)$ is exact.
    
\end{itemize}
\end{proof}
Let $U_0$ be the initial arithmetic universe, and $\mathbb{U}_0'$ its internal initial arithmetic universe. Let $\mathbb{N}$ denote the natural number object in $U_0$ and $\mathbb{N}'$ the natural number object in $\mathbb{U}_0'$.
Construct the externalization $U_0':= Ext_{U_0}(\mathbb{U}_0')$. 
Since $U_0$ is the initial arithmetic universe there is the initial functor $R: U_0 \to Ext(\mathbb{U}_0')$. Roughly speaking it interprets any construction that can be done in a general arithmetic universe in the specific universe $Ext(\mathbb{U}_0')$, which why we'll often denote $R(A)=A'$ for $A\in U_0$.

We also have a global section functor $\Gamma: Ext(\mathbb{U}_0)' \to U_0$. Let us see how it acts.
\textbf{On objects:} we construct the generic family of objects as a pullback
\[
\begin{tikzcd}
\mathbb{C}_1(1,\bullet) \arrow[d, " p"] \arrow[r] & \mathbb{C}_1 \arrow[d, " < cod{,}dom>"  ] \\
\mathbb{C}_0 \arrow[r, " < 1 {,} Id>" ] & \mathbb{C}_0 \times \mathbb{C}_0
\end{tikzcd}
\]
Given a representing code/name/arrow $X:1 \to \mathbb{C}_0$ for an object in $U_0'$ we compute $\Gamma(X)$ as the pullback
\[
\begin{tikzcd}
\Gamma(X) \arrow[d] \arrow[r] & \mathbb{C}_1(1',\bullet) \arrow[d, "p" ] \\
1\arrow[r] & \mathbb{C}_0 
\end{tikzcd}
\]
where $p: \mathbb{C}_1(1',\bullet) \to \mathbb{C}_0$  denotes the projection to the codomain. 

\textbf{On arrows:} let $T_0$ be the pullback
\[
\begin{tikzcd}
T_0 \arrow[r] \arrow[d] & \mathbb{C}_1(1',\bullet) \arrow[d,"p"] \\
 \mathbb{C}_1 \arrow[r, "dom" ] & \mathbb{C}_0
\end{tikzcd}
\]
let $T_1$ be the pullback
\[
\begin{tikzcd}
T_1 \arrow[r] \arrow[d, "r"] & \mathbb{C}_1(1',\bullet) \arrow[d,"p"]\\
\mathbb{C}_1 \arrow[r, "cod"] & \mathbb{C}_0 
\end{tikzcd}
\]
The way we constructed $T_0$ implies it is a subobject of $\mathbb{C}_2=\mathbb{C}_1 \times_{\mathbb{C}_0}\mathbb{C}_1$ and the composition map $\circ:\mathbb{C}_2 \to \mathbb{C}_1$ restrict to a map $T_0 \to \mathbb{C}_1(1,\bullet)$ such that
\[
\begin{tikzcd}
T_0 \arrow[r," \circ"] \arrow[d] & \mathbb{C}_1(1',\bullet) \arrow[d,"p"] \\
\mathbb{C}_1 \arrow[r, "cod" ] & \mathbb{C}_0
\end{tikzcd}
\]
commutes. By the universal property of the pullback we obtain a map $s: T_0 \to T_1$. The generic family is given by 
\[
\begin{tikzcd}
T_0 \arrow[rr," s" ] \arrow[dr] && T_1 \arrow[dl, " True' " ]\\
& \mathbb{C}_1 &
\end{tikzcd}
\]

Given a code for an arrow $1 \xrightarrow{f} \mathbb{C}$ with domain $R= dom \circ f$ and codomain $S=cod \circ f$ we pullback
\[
\begin{tikzcd}
f^{\ast}T_0 \arrow[dr] \arrow[rr,"f^{\ast}(s)"] && T_1 \arrow[dl] \\
&1&
\end{tikzcd}
\]
where we notice that $f^{\ast}(T_0)=R,f^{\ast}(T_1)=S$, so we get a map $\Gamma(f): R \to S$.
\begin{caution}
The functor $\Gamma$ is \emph{not} a morphism of arithmetic universes!
\end{caution}

\begin{lem} \label{FreydMap} Let $\eta_A:A \in U_0$. We have a map $A \to \Gamma(A')=\square A $.
\end{lem}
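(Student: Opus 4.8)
The plan is to construct $\eta_A$ directly from the explicit description of $\Gamma$ via the generic family of objects, and then to check that the construction is natural in $A$. Recall that $\square A = \Gamma(A') = \Gamma(R(A))$ is, by the formula given above, the pullback of the generic family $p\colon \mathbb{C}_1(1',\bullet)\to\mathbb{C}_0$ (with $\mathbb{C}=\mathbb{U}_0'$) along the code $\name{A'}\colon 1\to\mathbb{C}_0$; concretely, $\square A$ is the object of internal global points $1'\to A'$ of $A'$ inside $\mathbb{U}_0'$. By the universal property of that pullback, giving $\eta_A\colon A\to\square A$ amounts to giving the unique map $A\to 1$ together with a map $q_A\colon A\to\mathbb{C}_1(1',\bullet)$ with $p\circ q_A=\name{A'}\circ{!_A}$, i.e.\ an $A$-indexed internal morphism $1'\to A'$ --- the \emph{quotation} (or \emph{name}) map $a\mapsto\name{a}$.

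First I would construct $q_{\mathbb N}$, the numeral map. By the global-existence results proved above, the internal zero $0'\colon 1'\to\mathbb N'$ and successor $S'\colon\mathbb N'\to\mathbb N'$ of $\mathbb{U}_0'$ are available as genuine $U_0$-morphisms (points of $\mathbb{C}_1$), so one defines $q_{\mathbb N}\colon\mathbb N\to\mathbb{C}_1(1',\bullet)$ by primitive recursion in $U_0$: $q_{\mathbb N}(0)=\name{0'}$ and $q_{\mathbb N}(n{+}1)=\name{S'}\circ q_{\mathbb N}(n)$, using the internal composition $m$. (Here one notes $\Gamma(1')\cong 1$, using that $1'$ is internally terminal, so that $\name{0'}$ is literally a point of $\Gamma(\mathbb N')$.) For a general object $A$ of $U_0 = Pred(\Sigma_0)_{ex/reg}$, present $A$ as a quotient of a decidable predicate $P\hookrightarrow\mathbb N^{k}$; since $R$ preserves finite products and effective quotients, $A'$ is internally the corresponding quotient of $P'\hookrightarrow(\mathbb N')^{k}$, and the internal quotient arrow $(\mathbb N')^{k}\twoheadrightarrow A'$ is itself a point of $\mathbb{C}_1$ in $U_0$. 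One then defines $q_A$ on $P$ by sending a tuple in $\mathbb N^{k}$ to the $k$-fold internal pair of the corresponding numerals (via $q_{\mathbb N}$ and the internal pairing of $\mathbb{U}_0'$) and postcomposing with that internal quotient arrow, and checks that this respects the equivalence relation presenting $A$, so that it descends to $A$. Equivalently, and more in the spirit of the syntax--semantics correspondence, the assignment $A\mapsto q_A$ respects every type constructor of $\mathcal{T}_{AU}$, so it extends from the generators to all of $U_0=Syn(\mathcal{T}_{AU})$.

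Naturality: for $f\colon A\to B$ in $U_0$ one has $\square f=\Gamma(R(f))=\Gamma(f')$, which acts on $\square A$ by postcomposition with the internal arrow $f'\colon A'\to B'$; then $\square f\circ\eta_A$ and $\eta_B\circ f$ both send an element $a$ to the name of $f(a)$, and they agree because $R$ is a functor, so $f'\circ\name{a}=\name{f(a)}$. Hence $\eta\colon\mathrm{Id}_{U_0}\Rightarrow\square$ is a natural transformation. An alternative route, matching the name of the lemma, is to form the Artin gluing (Freyd cover) of $U_0$ along $\Gamma R$: provided $\Gamma R$ preserves enough finite-limit and coproduct structure, it is again an arithmetic universe, its codomain projection to $U_0$ is an AU-morphism, and by initiality of $U_0$ it admits a section, whose component at $A$ is precisely the desired map $A\to\Gamma R(A)$.

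The main obstacle is making $q_A$ a bona fide $U_0$-morphism uniformly in $A$, rather than a meta-level operation on global elements: ``forming the name'' must itself be primitive recursive. This is exactly what the global-existence lemmas above provide --- the internal $0'$, $S'$, pairings and quotient maps of $\mathbb{U}_0'$ are actual $U_0$-morphisms --- so that $q_{\mathbb N}$ is definable by recursion and the general case is assembled from it. The remaining verifications (that $q_A$ factors through the pullback, descends to the quotient, and is natural) are routine.
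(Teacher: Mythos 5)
Your primary construction --- building $\eta_{\mathbb N}$ as the numeral map by primitive recursion and then extending to all of $U_0$ through its presentation as $Pred(\Sigma_0)_{ex/reg}$ --- is a genuinely different route from the paper's, and it hides the real work in the step you dismiss as routine. To descend $q_A$ from a decidable predicate $P\hookrightarrow\mathbb N^k$ to its quotient $A$ you must show that whenever two tuples are related by the presenting equivalence relation \emph{in the meta-theory}, their internal numerals become equal after composing with the internal quotient arrow, i.e.\ that the relation holds \emph{provably} inside $\mathbb{U}_0'$. This is exactly the ``true decidable statements about numerals are internally provable'' step (the categorical avatar of provable $\Sigma_1$-completeness, one of the derivability conditions in the classical proofs of G\"odel II and L\"ob), and it requires its own induction over the primitive recursive presentation of the relation; it is the mathematical content of the lemma rather than a routine verification. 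The same issue recurs when you check that $q$ is well defined on morphisms of $Pred(\Sigma_0)$, which are only determined up to the equality $P\ast f=P\ast g$. So as written your main argument has a genuine, though fillable, gap.

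The ``alternative route'' you mention in a single sentence is in fact the paper's entire proof, and it is chosen precisely to bypass the computation above. The paper forms the Artin--Wraith gluing $Gl(\Gamma)$ along $\Gamma\colon Ext(\mathbb{U})\to U_0$, whose objects are triples $(A,B,\alpha\colon A\to\Gamma(B))$, notes that the projection to $Ext(\mathbb{U})$ is an AU functor, and invokes initiality of $U_0$ twice: once to obtain $T\colon U_0\to Gl(\Gamma)$ and once to identify its first two components with $\mathrm{Id}_{U_0}$ and $R$. The third component of $T(A)$ is then the required $\eta_A\colon A\to\Gamma(A')$, and naturality is automatic because $T$ is a functor. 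Your caveat that $\Gamma R$ must preserve enough structure is the right thing to flag --- the paper explicitly warns that $\Gamma$ is \emph{not} an AU morphism, but it does preserve finite limits, which is what the gluing needs to be an arithmetic universe. If you promote that closing sentence to the body of the proof you recover the paper's argument; your explicit numeral map remains valuable as a description of what $\eta_{\mathbb N}$ concretely is (the paper itself later identifies $\eta_{\mathbb N}=\iota$), but it should not carry the burden of the general existence statement.
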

\begin{proof}
This follows from the existence of the Freyd cover obtained from gluing along the global section functor $\Gamma$ and the initiality of $U_0$. We consider the arithmetic universe $U_0'=Ext(\mathbb{U})$. There is a functor $\Gamma: Ext(\mathbb{U}) \to \mathbb{U}_0$. 
Construct the Artin-Wraith gluing or Freyd cover $Gl(\Gamma)$ along $\Gamma$. Its objects are triple $(A,B,\alpha: A \to \Gamma(B))$ where $A\in U_0, B\in Ext(\mathbb{U})$. There is a projection map $p: Gl(\Gamma)\to Ext(\mathbb{U}) $ acting as $(A,B,\alpha: A \to \Gamma(B)) \mapsto B$ which is an AU functor.
By initiality we have a map $T: U_0 \to Gl(\Gamma)$. Again by initiality the diagram 
\[
\begin{tikzcd}
&Ext(\mathbb{U}) \\
Gl(\Gamma)\arrow[ur,"p"]]&\\
& U_0 \arrow[ul, "T"] \arrow[uu, "R" ]
\end{tikzcd}
\]
commutes. Hence $T$ acts as $A\mapsto (A, R(A)=A', \eta_A: A\to \Gamma(A') $. The map $\eta_A: A \to \Gamma(A')$ furnishes our required map. 
\end{proof}
\begin{remark}[Ingo]
Remark that this is quite curious as it would appear to say that: 
$U_0 \models A \Rightarrow A' \text{is provable}$
which seems false.
Indeed the analogous statement
\[
PA \vdash (\phi \to (PA\vdash \phi)
\]
is absolutely false. Take $\phi =Con(PA)$, then we conclude that if $PA$ were consistent it would prove its own consistency, hence would be inconsistent by Godel II.
We are saved however by the observation that $Con(PA)$ is not a formula in arithmetic type theory, but a sequent (as it uses a negation). Therefor this argument cannot be carried out internally. 
\end{remark}

The internal arithmetic universe $\mathbb{U}_0$ does not have power objects. However, the power object do exist, one level higher, in $U_0$.
Let $X:1 \to \mathbb{U}_0$ be a (global) object of $\mathbb{U}_0$. We have $\mathbb{U}_0(\bullet,X)$. To construct the power object of subobject $\mathcal{P}(X)$ we would first have to construct the monomorphism. The usual definition of monomorphism cannot be expressed in arithmetic type theory unfortunately, as it uses a universal quantifier one too many times. Fortunately, there is an alternate characterization: a map $i: Y\to X$ is a monomorphism if in the following diagram
\[
\begin{tikzcd}
Y\times_XY \arrow[d, "p_1"]\arrow[r, "p_2"] & Y \arrow[d]\\
Y \arrow[r] &X
\end{tikzcd}
\]
we have $p_1=p_2$. Hence the object of monomorphisms is constructed as $Mono(X)= \Sigma_{f:\mathbb{U}_0(\bullet,X)}p_1=p_2$. The subobjects may be similarly constructed by defining the relation $S_X$ on $Mono(X)$ given by $f\sim_Sg=\Sigma_{f,g: Mono(X)} Cod(f) \to Cod(g)$ and then taking the quotient. 

Given a subobject $1 \xrightarrow{A} \mathcal{P}1'$ there are two natural subobjects in $U_0$ associated to $A$: the equalizer
\[
\begin{tikzcd}
\ext{A=True'} \arrow[r] &1 \arrow[r, shift left=1.5ex,"A"] \arrow[r,swap, "True'"]& \mathcal{P}1'
\end{tikzcd}
\]
and the global sections $\Gamma(A)=\mathbb{U}'_0(1',A)$. These notions coincide:

\begin{prop} Let $1\xrightarrow{A}\mathcal{P}(1')$. The following subobjects are equal:
\[
\Gamma(A)=\mathbb{U}'_0(1',A)= \ext{A=True'}
\]
\end{prop}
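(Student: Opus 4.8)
The plan is to turn the statement into a single condition on generalized elements. For the first equality I would simply unwind the construction of $\Gamma$ given above: by definition $\Gamma(A)$ is the pullback of the generic family $\mathbb{C}_1(1',\bullet)\to\mathbb{C}_0$ (with $\mathbb{C}=\mathbb{U}_0'$) along the name $A\colon 1\to\mathbb{C}_0$ of the object it classifies, and reading off that pullback gives exactly the external hom-object $\mathbb{U}_0'(1',A)$. Here, as in the statement, $A$ denotes both the point $1\to\mathcal{P}1'$ and the object of $\mathbb{U}_0'$ it names, together with the mono $m_A\colon A\hookrightarrow 1'$ it picks out (recall that $\mathcal{P}1'$ is the object of $U_0$ whose maps $Z\to\mathcal{P}1'$ are subobjects of $(1')_Z$ in the slice of $\mathbb{U}_0'$ over $Z$, and that $True'$ names $\mathrm{id}_{1'}$). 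Post-composition with $m_A$ realises $\mathbb{U}_0'(1',A)$ as a subobject of $\mathbb{U}_0'(1',1')$, which is $1$ because $1'$ is internally terminal; thus $\Gamma(A)$ is a subobject of $1$ in $U_0$, exactly like the equalizer $\ext{A=True'}$. So it suffices to check that these two subobjects of $1$ have the same generalized elements.

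Fix $Z\in U_0$. Using that the Grothendieck externalization is stable under pullback (so that $\Gamma$ and the fibrewise power object $\mathcal{P}1'$ commute with base change along $!_Z\colon Z\to 1$), I would pass to the slice $U_0/Z$ and the base-changed internal arithmetic universe $(\mathbb{U}_0')_Z$, where $A$ names a mono $(m_A)_Z\colon A_Z\hookrightarrow (1')_Z$ and $True'$ names the identity of the terminal object $(1')_Z$. Then $!_Z$ factors through $\ext{A=True'}$ iff $A\circ!_Z=True'\circ!_Z$, which by the classifying property of $\mathcal{P}1'$ says the subobject $(m_A)_Z$ is the maximal one, i.e. $(m_A)_Z$ is an isomorphism. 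On the other hand, $!_Z$ factors through $\Gamma(A)=\mathbb{U}_0'(1',A)$ iff there is a morphism $(1')_Z\to A_Z$ in $(\mathbb{U}_0')_Z$; composing any such morphism with $(m_A)_Z$ yields an endomorphism of the terminal object $(1')_Z$, hence the identity, so the morphism is a section of $(m_A)_Z$. Since a monomorphism possessing a section is an isomorphism (a diagram chase valid in the internal logic), and conversely $(m_A)_Z^{-1}$ is such a section when $(m_A)_Z$ is invertible, we get that $!_Z$ factors through $\Gamma(A)$ iff $(m_A)_Z$ is an isomorphism as well.

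Both factorisation conditions are literally ``$(m_A)_Z$ is an isomorphism'', so $\Gamma(A)$ and $\ext{A=True'}$ have the same generalized elements and hence are equal as subobjects of $1$; no further coherence need be checked, since a factorisation of $!_Z$ through a subobject of $1$ is unique when it exists. I expect the main obstacle to be bookkeeping rather than ideas: one must state precisely and correctly that (i) $\mathcal{P}1'$ classifies subobjects of $1'$ in the internal AU fibrewise over $U_0$, and (ii) the externalization/$\Gamma$ construction (a composite of pullbacks) genuinely commutes with base change along $!_Z$ — this is the stability-under-pullback property of $\int$ recorded above, but it must be applied with care to the composite square defining $\Gamma$. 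The remaining ingredients, ``the only endomorphism of a terminal object is the identity'' and ``a mono with a section is an iso'', are finite-limit statements and so survive interpretation in the slice.
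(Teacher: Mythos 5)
Your proof is correct, but it takes a genuinely different route from the paper's. Like you, the paper first unwinds $\Gamma(A)$ as the pullback of the generic family $K_0 \to \mathcal{P}1'$ (built over $Mono(\bullet,1')$) along $A$, but it then finishes entirely at the global level: since $\Gamma(A)$ and $\ext{A=True'}$ are both subobjects of $1$, it suffices to produce one map in each direction, and that is all the paper does. The map $\Gamma(A)\to\ext{A=True'}$ falls out of the equalizer's universal property applied to the pullback cube (the two composites $\Gamma(A)\to 1 \rightrightarrows \mathcal{P}1'$ agree), and the map $\ext{A=True'}\to\Gamma(A)$ comes from restricting the canonical point $True'\colon 1\to K_0$ --- the identity of $1'$ viewed as a global section of the generic family over the maximal subobject --- and invoking the pullback's universal property. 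You instead test both subobjects against arbitrary generalized elements $Z$ and reduce each factorization condition to ``$(m_A)_Z$ is an isomorphism.'' That yields a cleaner conceptual statement (both sides classify the same condition), but at the cost of the infrastructure you yourself flag: you need $\mathcal{P}1'$ to classify subobjects of $1'$ over an arbitrary base $Z$, which requires lifting $A\circ{!_Z}$ and $True'\circ{!_Z}$ along the non-split quotient $Mono(\bullet,1')\twoheadrightarrow\mathcal{P}1'$ (available here only because $A$ and $True'$ are global points, $1$ is projective, and quotients are effective, so equality of the two composites can be tested on the kernel pair), and you need the composite of pullbacks defining $\Gamma$ to be stable under base change. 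The paper's two comparison maps sidestep all of this. Your closing ingredients --- a mono into a terminal object admitting a section is an iso, and the only endomorphism of a terminal object is the identity --- are exactly the fact the paper encodes by using $True'$ itself as the section, so the mathematical core is shared even though the packaging differs.
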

\begin{proof}
Let's investigate the relation between $\Gamma(A)$ and $\ext{A=True'}$. Recall how $\Gamma(A)$ is constructed: 
Let $K_0$ denote the pullback
\[
\begin{tikzcd}
K_0 \arrow[r] \arrow[d] & \mathbb{U}_1(1',\bullet) \arrow[d] \\
Mono(\bullet,1') \arrow[r, " cod" ] & \mathbb{U}_0
\end{tikzcd}
\]
over $\mathcal{P}1'$ we have the commutative triangle
\[
\begin{tikzcd}
K_0 \arrow{dr} \arrow{rr} && 1 \arrow[dl,"True'"]\\
&\mathcal{P}1'&
\end{tikzcd}
\]

pullback this triangle along $1\xrightarrow{A} \mathcal{P}1'$ to obtain 
\[
\begin{tikzcd}
\Gamma(A) \arrow[dd,hook] \arrow[dr, hook] \arrow[rr]&&K_0 \arrow[dd] \arrow[dr,hook]&\\
&1\arrow[dl] \arrow[rr] && 1 \arrow[dl,"True'"]\\
1\arrow[rr, "A"]&&\mathcal{P}1' &
\end{tikzcd}
\]
where the front and back faces are pullback squares. We obtain a map $\Gamma(A) \to \ext{A=True'}$ by the universal property of the equalizer. 

We have a map $True': 1 \to K_0$. Compose this with the arrow $\ext{A=True'} \to 1$ to obtain a map $\ext{A=True'} \to K_0$. Together with the map $\ext{A=True'} \to 1$ this produces a map $\ext{A=True'} \to \Gamma(A)$ by the universal property of the pullback.
\end{proof}

\begin{prop}
\begin{enumerate}
    \item If $1 \xrightarrow{\mathbb{N}'} \mathcal{U}_0'$ is the formal natural numbers object, then there is an enumeration 
\[
e : \mathbb{N} \twoheadrightarrow \mathcal{P}'\mathbb{N}'
\]
of "all formulae with one variable free."
    \item The terminal object $1$ is projective. 
\end{enumerate}
\end{prop}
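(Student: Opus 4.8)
The plan is to prove the two items separately. Both ultimately rest on a single structural fact together with a single completion-theoretic fact: the internal initial arithmetic universe $\mathbb{U}_0'$ was built by an \emph{explicitly primitive recursive} construction (internal Skolem theory, then $\Pred$, then the exact completion), so all of its syntactic data --- its objects, its arrows, hence $Mono(\mathbb{N}')$ and $\mathcal{P}'\mathbb{N}'$ --- live in $U_0$ as sub-quotients of $\mathbb{N}$; and, by the Proposition on $\Pred(\Sigma)$ above, $\Pred(\Sigma_0)$ satisfies the internal axiom of choice, i.e. every regular epi there splits (every arrow factors as a split epi followed by a mono).

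\emph{Part (1).} Inside $U_0$, let $\mathrm{Form}_1\hookrightarrow\mathbb{N}$ be the subobject of Gödel codes of well-formed formulae of $\mathcal{T}_{AU}$ with exactly one free variable of type $\mathbb{N}$. Parsing and free-variable computation are primitive recursive, so $\mathrm{Form}_1$ is a \emph{decidable} subobject of $\mathbb{N}$, and it is inhabited (e.g. by the code of $x=_{\mathbb{N}}x$); hence the inclusion $\mathrm{Form}_1\hookrightarrow\mathbb{N}$ has a primitive recursive retraction $r$ (send $n$ to $n$ if $n\in\mathrm{Form}_1$, to a fixed code otherwise), so $r$ is a split epimorphism $\mathbb{N}\twoheadrightarrow\mathrm{Form}_1$ in $U_0$. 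Next, using the description $\mathcal{P}'\mathbb{N}'=Mono(\mathbb{N}')/S_{\mathbb{N}'}$ recalled just above, define $p:\mathrm{Form}_1\to\mathcal{P}'\mathbb{N}'$ by sending a formula $\phi(x)$ to the subobject of $\mathbb{N}'$ it defines in the internal logic of $\mathbb{U}_0'$; the code of that subobject is produced primitive recursively from the code of $\phi$, so $p$ is a morphism of $U_0$. Put $e:=p\circ r:\mathbb{N}\twoheadrightarrow\mathcal{P}'\mathbb{N}'$. It remains to see that $p$, hence $e$, is a (regular) epimorphism; for this I would use the ``read-off'' assignment $\sigma:Mono(\mathbb{N}')\to\mathrm{Form}_1$ sending a mono $i:Y\hookrightarrow\mathbb{N}'$ to the code of $\exists y.\ i(y)=x$ --- again a primitive recursive operation on codes, hence a morphism of $U_0$ --- and observe that $p\circ\sigma$ equals, up to the canonical isomorphism $im(i)\cong Y$ valid because $i$ is monic, the quotient map $Mono(\mathbb{N}')\twoheadrightarrow Mono(\mathbb{N}')/S_{\mathbb{N}'}=\mathcal{P}'\mathbb{N}'$. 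That quotient map is a regular epi, and in a regular category a composite being a regular epi forces its second factor to be one; so $p$, and hence $e$, is a regular epi. By construction $e(n)$ is ``the subobject of $\mathbb{N}'$ cut out by the $n$-th one-variable formula'', which is the asserted enumeration (of all such formulae up to provable equivalence, since $S_{\mathbb{N}'}$ identifies provably equivalent ones).

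\emph{Part (2).} It suffices to show every well-supported object of $U_0$ has a global section: given a regular epi $q:S\twoheadrightarrow T$ and a point $t:1\to T$, the pullback $t^{*}S\to 1$ is again a regular epi by regularity of $U_0$, so $t^{*}S$ is well-supported, and a global section $1\to t^{*}S$ composed with $t^{*}S\to S$ is the required lift. Now let $(A,R)$ be well-supported in $U_0=\Pred(\Sigma_0)_{ex/reg}$. Then $(A,\Delta_A)\twoheadrightarrow(A,R)\twoheadrightarrow 1$ is a composite of regular epimorphisms, hence a regular epimorphism; since the canonical embedding $\Pred(\Sigma_0)\hookrightarrow U_0$ is full, faithful and reflects regular epimorphisms (standard properties of the exact completion), $A\to 1$ is a regular epimorphism in $\Pred(\Sigma_0)$, which splits by the axiom of choice there. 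Composing a splitting $1\to A$ with $A\to(A,R)$ gives the global section. (Equivalently: $1_{U_0}=(1,\Delta_1)$ is the image of a projective object of $\Pred(\Sigma_0)$, and in the exact completion of a regular category all of whose objects are projective the objects of the original category stay projective.) This yields projectivity of $1$ with respect to regular epimorphisms, which is exactly what the diagonal argument of the next section needs --- in particular it applies to the $e$ of part (1), which we just produced as a composite of regular epis.

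\emph{Main obstacle.} The only point that is not soft is in part (1): verifying that $r$, $p$ and $\sigma$ really are morphisms of $U_0$, i.e. that parsing, free-variable computation, and the formation of the code of ``the subobject defined by $\phi$'' and of ``$\exists y.\ i(y)=x$'' are primitive recursive against whatever concrete Gödel coding of $\mathcal{T}_{AU}$ one fixes. This is tedious bookkeeping but entirely routine. Everything after that is formal: regular epimorphisms compose, $p\circ\sigma$ being a regular epi forces $p$ to be one, and the embedding of $\Pred(\Sigma_0)$ into its exact completion reflects covers and preserves the terminal object.
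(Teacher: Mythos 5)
Your part (2) is essentially the paper's argument: both reduce projectivity of $1$ to the splitting of epimorphisms in $\Pred(\Sigma_0)$ by exploiting the canonical covers $(Y,\Delta_Y)\twoheadrightarrow (Y,R)$ coming from the exact/regular completion. The paper organizes this slightly differently --- it pulls the given epimorphism back along the cover $Y_2\twoheadrightarrow (Y_2,A_2)$ and uses that a point of $(Y_2,A_2)$ is, by the very definition of morphisms in the completion, represented by a point of $Y_2$ --- whereas you pull back along the point itself and invoke reflection of regular epis by the full embedding; the two are interchangeable and both end by splitting the resulting epi in $\Pred(\Sigma_0)$. Part (1) is where you genuinely diverge. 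The paper never mentions formulae: it enumerates the \emph{morphisms} of the internal universe (primitive recursive functions being primitive recursively enumerable, giving $\mathbb{N}\twoheadrightarrow Mor(U_0')$), composes with the image map $Im: Mor(U_0')\to Mono(U_0')$ sending $f$ to $im(f)$, and then with the quotient $Mono(\bullet,\mathbb{N}')\twoheadrightarrow \mathcal{P}'\mathbb{N}'$; surjectivity is immediate because every subobject is the image of its own monomorphism. You instead enumerate G\"odel codes of one-variable formulae and interpret them, obtaining surjectivity from the read-off map $\sigma$ together with the fact that a second factor of a regular epi is a regular epi. Both routes are sound. The paper's is shorter because it sidesteps the arithmetization of the syntax of $\mathcal{T}_{AU}$ entirely --- the only coding it needs is the one already present in the explicit construction of $\mathbb{U}_0'$ --- while yours has the merit of actually justifying the phrase ``all formulae with one variable free'' in the statement and making the formula--subobject correspondence explicit, at the cost of the parsing/interpretation bookkeeping you flag (including the point, worth stating, that the formulae must be taken in the internal language of $\mathbb{U}_0'$ so that $\sigma$ can name an arbitrary mono by its code).
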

\begin{proof}
Recall that the initial arithmetic universe $U_0$ is constructed in three stages: take the initial Skolem theory $\Sigma_0$, take its category of predicates $Pred(\Sigma_0)$ and finally construct the ex/reg-completion.

The construction of $\Sigma_0'$ i complicated but essentially the morphisms $Mor(\Sigma_0')$ are given by primitive recursive functions. These are primitive recursively enumerable hence we have an epimorphism $\N \twoheadrightarrow Mor(\Sigma_0')$. Similarly, for $Mor(U_0')$.
Observe that we have a map $Im: Mor(U_0') \to Mono(U_0')$ which is defined by sending $f:A \to B$ to $im(f) \hookrightarrow B$.
Next, for a $\mathbb{N}'$ apply the quotient map $Mono(\bullet, \mathbb{N}')\twoheadrightarrow P(\mathbb{N}')$. Notice that after applying this quotient map it will be the identity on monomorphisms. We conclude that we have the required epimorphism $\N \twoheadrightarrow \mathcal{P}(\mathbb{N}')$.

Suppose we have an epimorphism $f:(Y_1,A_1) \to (Y_2,A_2)$ for equivalence relations $A_1,A_2$ on objects $Y_1,Y_2 \in Pred(\Sigma_0)$, and a map $x: 1 \to (Y_2,A_2)$. By construction of the exact/regular completion the map $x:1 \to (Y_2,A_2)$ lifts to a map $\tilde{x}: 1 \to Y_2 \twoheadrightarrow (Y_2,A_2)$. Take the pullback
\[
\begin{tikzcd}
(Y_1,A_1)\times_{(Y_2,A_2)}Y_2 \arrow[d, two heads] \arrow[r, two heads]& Y_2 \arrow[d,two heads]\\
(Y_1,A_1) \arrow[r, two heads] & (Y_2,A_2)
\end{tikzcd}
\]
where we used that epimorphisms are stable under pullback. The pullback $(Y_1,A_1)\times_{(Y_2,A_2)}Y_2$ is also of the form $(Y_3,A_3)$ for some equivalence relation $A_3$ on $Y_3\in Pred(\Sigma_0)$. Hence we have an epimorphism $Y_3 \twoheadrightarrow (Y_3,A_3)$; composing with $(Y_1,A_1)\times_{(Y_2,A_2)}Y_2 \twoheadrightarrow Y_2$ we get an epimorphism $Y_3 \twoheadrightarrow Y_2$ using that an epimorphism in an exact/completion of a category $C$ between objects in the image of $C$ yields an epimorphism in $C$. In $Pred(\Sigma_0)$ we know that every map factor through a split epimorphism followed by a monomorphism. We conclude that the map $Y_3 \twoheadrightarrow Y_2$ splits hence by composition we obtain a map $1 \to Y_3$ and hence a map $1 \to (Y_1,A_1)$.
\end{proof}
Finally, we construct 
\[
\mathbb{N} \mapsto ( \iota: \mathbb{N} \to [1',\mathbb{N}'])
\]
where $\eta_{\mathbb{N}}=\iota$ is the map which assigns each natural number $n$ its "formal expression" $n'$. 
\begin{thm}[G\"odel's First Incompleteness Theorem]
If an arithmetic universe object $\mathbb{U}$ in $\mathcal{U}_{rec}$ is [syntactically] complete then it is the trivial $AU$-object [hence inconsistent].
\end{thm}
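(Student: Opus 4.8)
The plan is to run Cantor's diagonal argument of the earlier theorem in the internal universe $\mathcal{U}_0' = Ext(\mathbb{U}_0')$, using the two ingredients just established: the enumeration $e\colon \mathbb{N} \twoheadrightarrow \mathcal{P}'\mathbb{N}'$ and the projectivity of $1$. Concretely, first I would form the ``diagonal'' subobject $D \hookrightarrow \mathbb{N}$ by the pullback of $false\colon 1 \hookrightarrow \mathcal{P}1'$ along $\mathbb{N} \xrightarrow{\Delta} \mathbb{N}\times\mathbb{N} \xrightarrow{1\times (e')} \mathbb{N}\times \mathcal{P}'\mathbb{N}' \xrightarrow{eval} \mathcal{P}1'$, where $e'$ is the transpose of the enumeration; this is precisely the categorical form of the set $\{n : n \notin e(n)\}$. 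The subobject $D$ is classified by a name $\name{D}\colon 1 \to \mathcal{P}'\mathbb{N}'$ --- this is where it matters that power objects of $\mathbb{N}'$ exist \emph{one level up}, in $U_0$, as discussed before the proposition. Since $e$ is an epimorphism onto $\mathcal{P}'\mathbb{N}'$ and $1$ is projective, $\name{D}$ lifts along $e$ to some $g\colon 1 \to \mathbb{N}$ with $e(g) = D$ (as subobjects of $\mathbb{N}$, up to the identification of $D$ with its name).

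Next I would chase through exactly the same pullback manipulations as in the proof of Cantor's theorem: restrict everything along $g\colon 1 \to \mathbb{N}$ to obtain $\ext{g \in D}$ as a subobject of $1$, and show by the composite-of-pullbacks argument that $\ext{g\in D} \hookrightarrow 1$ is simultaneously the equalizer of $\name{0}, \name{1}\colon 1 \toto \mathcal{P}1'$ and the pullback of $false$ along $false$, hence isomorphic both to $0 \hookrightarrow 1$ and to $1 \hookrightarrow 1$. The poset argument using $Sub(Z) = \hom(Z,\mathcal{P}1')$ goes through verbatim. This forces $0 = 1$ in $\mathcal{P}1'$ of the internal universe, i.e. $\mathbb{U}$ is trivial --- except that the diagonal argument as written produces a \emph{non-triviality} conclusion only if we already know $\name{D}=True'$ somewhere, so instead I should phrase it as: the two subobjects $\ext{g\in D}$ being both $0$ and $1$ says $0 \cong 1$ as subobjects of $1'$, which is the statement that $\mathbb{U}$ is degenerate/inconsistent.

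The point where completeness enters, and the place I expect to do real work, is the step that turns ``$e$ is epi onto $\mathcal{P}'\mathbb{N}'$'' together with ``$1$ projective'' into an honest contradiction with consistency. In the bare diagonal argument one only needs an enumeration $A \twoheadrightarrow \mathcal{P}A$; here $\mathbb{N}$ enumerates $\mathcal{P}'\mathbb{N}'$ only because, in the \emph{recursive} universe $\mathcal{U}_{rec}$ obtained by inverting the bijections, the passage from a code to the subobject it names is total --- and this is exactly where syntactic completeness is used: completeness says every $u \in \mathcal{P}1'$ equals $0$ or $1$ in the metatheory, which is what lets the diagonal element $\name{D}$ be \emph{decided} and hence genuinely lifted to a point of $\mathbb{N}$ rather than merely to a point of some cover. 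So the argument should be organized as: (i) completeness $\Rightarrow$ the evaluation $\mathbb{N} \times \mathcal{P}'\mathbb{N}' \to \mathcal{P}1'$ followed by the $0/1$ case split is well-defined on global sections, giving the enumeration $e$; (ii) projectivity of $1$ plus $e$ epi lifts the diagonal name; (iii) the Cantor pullback chase yields $0 \cong 1$, i.e. the $AU$-object is trivial.

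The main obstacle, then, is not the diagonalization --- that is a faithful internalization of the Cantor theorem already proved --- but making precise the claim that completeness is what upgrades ``subobjects of $\mathbb{N}'$ are primitive-recursively enumerable'' (which only gives an epi onto $Mono(\bullet,\mathbb{N}')$, hence onto $\mathcal{P}(\mathbb{N}')$ of the \emph{full} internal $U_0'$) into an epi onto $\mathcal{P}'\mathbb{N}'$ that admits the diagonal lift inside $\mathcal{U}_{rec}$; equivalently, checking that the identification $\Gamma(A) = \ext{A = True'}$ from the preceding proposition interacts correctly with the case split supplied by completeness. I would isolate that as a lemma and then let the Cantor chase run on autopilot.
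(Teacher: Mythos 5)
Your overall shape --- form the diagonal subobject $D$, lift its name along $e$ using projectivity of $1$, restrict along the lifted point, and chase pullbacks --- matches the paper. But there are two genuine problems. First, you have mislocated where completeness enters. The enumeration $e\colon \mathbb{N}\twoheadrightarrow \mathcal{P}'\mathbb{N}'$ and the projectivity of $1$ are established \emph{unconditionally} in the preceding proposition (they come from the primitive recursive enumerability of the morphisms of $\Sigma_0'$ and the splitting of image factorizations in $\Pred(\Sigma_0)$); no case split on truth values is needed to obtain the lift $n\colon 1\to\mathbb{N}$. Your step (i), deriving the enumeration from completeness, is therefore both unnecessary and misleading, and it leaves the actual use of the hypothesis unaccounted for.

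Second, and more seriously, your plan to conclude that $\ext{g\in D}$ is ``simultaneously $0$ and $1$'' by running the Cantor chase verbatim does not go through. In Cantor's theorem the map $\name{\ext{a\in D}}\colon 1\to\mathcal{P}1$ \emph{is} the characteristic map of the subobject $\ext{a\in D}$, which is why one and the same square exhibits it as both the equalizer of $\name{0},\name{1}$ and the pullback of $false$ along $false$. Here the bottom map of the key square is $\ext{n\in D}'=R(\ext{n\in D})\colon 1\to\mathcal{P}'1'$, the \emph{internal code} of the sentence, and a priori this bears no such relation to the external subobject $\ext{n\in D}\hookrightarrow 1$. (Identifying the composite $eval\circ(\iota\times e)\circ\Delta\circ n$ with $\ext{n\in D}'$ also requires the numeral map $\iota\colon\mathbb{N}\to[1',\mathbb{N}']$ with $\iota(n)=R(n)=n'$, which your ``$1\times e'$'' omits; without it the evaluation does not even typecheck.) The correct ending is a case split, and \emph{this} is where completeness is used: completeness forces the subobject $\ext{n\in D}\hookrightarrow 1$ to be $0$ or $1$. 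If it is $0$ then $\ext{n\in D}'=R(0)=false'$ and the pullback square forces $\ext{n\in D}=1$; if it is $1$ then $\ext{n\in D}'=true'$ and the square forces $\ext{n\in D}=0$. Each horn separately contradicts consistency, so a complete $\mathbb{U}$ must be trivial. Without completeness the sentence is simply neither $0$ nor $1$ and no contradiction arises --- that is exactly the content of the theorem, whereas your version, which extracts $0\cong 1$ unconditionally from the diagonal chase, would wrongly prove that the ambient initial arithmetic universe is itself inconsistent.
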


\begin{proof}
Cantor's Diagonal argument will now be imitated to prove G\"odel's Incompleteness Theorem in a special case. Form the pullback:
\[
\begin{tikzcd}
D \arrow[d,hook] \arrow[rrr] &&& 1 \arrow[d,hook,"false'"]\\
\mathbb{N} \arrow[r,"\Delta_{\mathbb{N}}"]& \mathbb{N} \times \mathbb{N}\arrow[r,"\iota \times e"]& \left[ 1' ,\mathbb{N}' \right] \times \mathcal{P}'\mathbb{N}'\arrow[r,"eval"]& \mathcal{P}'1'
\end{tikzcd}
\]

Since 1 is projective in the initial AU form the lift $n$ of $ D'$:
\[
\begin{tikzcd}
&\mathbb{N}\arrow[d,"e"]\\
1 \arrow[ur, dotted, "n"] \arrow[r, "D'"]& \mathcal{P}'\mathbb{N}' \\
\end{tikzcd}
\]
then
\[
\begin{tikzcd}
\ext{n\in D} \arrow[r,hook] \arrow[d,hook]&D \arrow[d,hook] \arrow[rrr]&&& 1 \arrow[d,hook,"false'"]\\
1\arrow[r,"n"]&\mathbb{N} \arrow[r,"\Delta_{\mathbb{N}}"]& \mathbb{N} \times \mathbb{N}\arrow[r,"\iota \times e"]& \left[1',\mathbb{N}'\right] \times \mathcal{P}'\mathbb{N}'\arrow[r,"eval"]& \mathcal{P}'1'
\end{tikzcd}
\]

is given by pullback. The subobject $\ext{n\in D}$ is the categorical incarnation of the G\"odel sentence $G$; asking whether this subobject factors through $1$ is equivalent to asking whether $G$ is provable.

Consider $\ext{n \in D}': 1 \hookrightarrow \mathcal{P}'1'$ , it follows that this is a pullback square by composition
\[
\begin{tikzcd}
\ext{n \in D} \arrow[d, hook] \arrow[r,hook]& 1\arrow[d, hook, "false'"]\\
1 \arrow[r]& \mathcal{P}'1'
\end{tikzcd}
\]
The bottom map is the composition $eval \circ \iota\times e \circ \Delta_{\nno} \circ n$; we claim this composition coincides with $\ext{n\in D}'$. Indeed $e\circ n= D'=R(D) $ by definition of $n$. By construction also $\iota(n)=R(n)=n'$ hence the composition is $\ext{n'\in D'}=\ext{n\in D}'$.

If $\ext{n\in D}$ equals $0=False$ then $\ext{n\in D} $ is the pullback of $false'$ along $false'$,hence we conclude $\ext{n\in D}$ equals $1=True$. This contradicts the consistency of $\mathcal{U}_0$. On the other hand, if $\ext{n\in D}$ equals $1=True$ then $\ext{n\in D} $ is the pullback of $True'$ along $False'$, hence we conclude $\ext{n\in D}$ equals $0=False$, contradiction.
We conclude that the G\"odel sentence $\ext{n\in D}$ is neither $0$ or $1$.
\end{proof}

We are now ready to see the proof of Godel's second incompleteness theorem for PRA. Let $U_0, U_0'$ be as before. The arithmetic universe $U_0$ proves the consistency of $U_0'$ if $\ext{True'=False'}\hookrightarrow 1$ equals the minimal subobject $0\hookrightarrow 1$.
\begin{thm}[G\"odel's Second Incompleteness Theorem] Assume that $U_0$ is consistent. Then the subobject $\ext{True'=False'}\hookrightarrow 1$ does not equal the minimal subobject $0\hookrightarrow 1$ in $U_0$.
\end{thm}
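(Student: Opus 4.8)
The plan is to obtain a contradiction from the assumption that $\ext{True'=False'}\hookrightarrow 1$ equals the minimal subobject $0\hookrightarrow 1$ in $U_0$, by running the easy half of the First Incompleteness proof one level down --- inside $U_0$, about $U_0'$ --- and playing it against the First Incompleteness Theorem itself. Recall two ingredients from the proof of the First Incompleteness Theorem. The \emph{fixed-point identity}: the G\"odel subobject $G:=\ext{n\in D}\in\mathrm{Sub}_{U_0}(1)$ is the pullback of $false':1\hookrightarrow\mathcal{P}'1'$ along $\name{R(G)}$ --- this is exactly the computation $e\circ n=D'=R(D)$ together with $\iota(n)=R(n)=n'$ --- so that $G\cong\ext{R(G)=0_{U_0'}}$, the subobject of $1$ in $U_0$ classifying the internal assertion that $R(G)$ is the empty subobject of $1$ in $U_0'$. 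And the \emph{easy branch}: if $G=0$ in $U_0$ then $R(G)=R(0)=0_{U_0'}$, whence $G$ is the pullback of $false'$ along $false'$, hence $G=1$; so already $G\ne 0$ in $U_0$, using only the consistency of $U_0$.

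The decisive step is to relativise that easy branch into the internal logic of $U_0$. Since $R\colon U_0\to U_0'$ is an AU functor, applying it to the fixed-point identity and to the easy branch produces, internally to $U_0$, the same statement one level up: internally, $R(G)$ is again a G\"odel subobject, so the implication $[\,R(G)=0_{U_0'}\,]\Rightarrow[\,R(G)=1_{U_0'}\,]$ is derivable in $U_0$ --- from $R(G)=0_{U_0'}$ one gets that the relevant name equals $false''$, hence $R(G)$ is a pullback of a mono along itself, hence equals $1_{U_0'}$. Chaining with transitivity of equality gives $U_0\vdash\bigl([\,R(G)=0_{U_0'}\,]\Rightarrow[\,0_{U_0'}=1_{U_0'}\,]\bigr)$. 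Now the standing assumption $\ext{True'=False'}=0$ says precisely that $[\,0_{U_0'}=1_{U_0'}\,]\vdash\bot$ in $U_0$; composing, $[\,R(G)=0_{U_0'}\,]\vdash\bot$ in $U_0$, i.e. $\ext{R(G)=0_{U_0'}}=0$. By the fixed-point identity this reads $G=0$ in $U_0$, contradicting $G\ne 0$. Hence $\ext{True'=False'}\ne 0$, which is the claim.

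I expect the main obstacle to be making the relativisation of the easy branch into the internal logic of $U_0$ fully rigorous: one has to verify that the elementary facts used --- a pullback of a mono along itself is its domain, $R$ sends $false'$ to its one-level-up analogue, and $R$ is compatible with the formation of $0$, of $\mathcal{P}'1'$ and of names --- all survive as genuine derivations in the internal language of $U_0$ about its internal arithmetic universe object. This is precisely the kind of routine but burdensome internal bookkeeping flagged earlier, and is exactly what the type theory $\mathcal{T}_{AU}$ exists to legitimise. It is worth stressing that, in contrast with the traditional formalisation of the Second Incompleteness Theorem, no $\omega$-consistency or soundness hypothesis enters: every intermediate step is an equality or an entailment of subobjects available in $U_0$, and the hypothesis $\ext{True'=False'}=0$ is used exactly once, as the single entailment $[\,0_{U_0'}=1_{U_0'}\,]\vdash\bot$.
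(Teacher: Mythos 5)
Your overall architecture matches the paper's: both arguments reduce the theorem to the two facts that $G:=\ext{n\in D}$ satisfies $G\leq \ext{True'=False'}$ and that $G\neq 0$ (the latter being the easy branch of the First Incompleteness proof, which you state correctly), and the contradiction you then extract from the hypothesis $\ext{True'=False'}=0$ is the same as the paper's. The gap lies in how you obtain the first fact. You propose to derive the entailment $\ext{R(G)=0_{U_0'}}\leq\ext{R(G)=1_{U_0'}}$ by ``applying $R$ to the fixed-point identity and to the easy branch''. But the easy branch is a meta-implication between external equalities of subobjects (``if $G=0$ then $G=1$''), and a functor does not act on meta-implications: what $R$ gives you is at best the analogous meta-implication about $U_0'$, not the internal entailment between subobjects of $1$ in $U_0$ that your argument needs. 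Converting the meta-argument into an internal one is exactly the content of the second and third derivability conditions in the classical proof: one must show that $R$ carries the whole coding apparatus (the internal AU $\mathbb{U}_0'$, the enumeration $e$, the numeral map $\iota$, the code $n$, and the formation of names) to its one-level-down analogue up to the right identifications, and in particular that $R(\mathbb{U}_0')$ is again an initial internal AU. None of this follows from $R$ merely being an AU functor, and it is not the routine bookkeeping you describe --- it is the crux of the Second Incompleteness Theorem, and as written your proof assumes it.

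The paper avoids this entirely. The entailment you need is precisely $G\leq\Gamma(R(G))=\ext{R(G)=True'}$, i.e.\ $G\leq\square G$, and this is supplied for \emph{every} subobject of $1$ by the gluing map $\eta_A\colon A\to\Gamma(A')$ of Lemma \ref{FreydMap} (Freyd cover along $\Gamma$ plus initiality of $U_0$); no internalisation of the G\"odel I argument occurs. Combining $\eta_G$ with the defining pullback $G=\ext{R(G)=False'}$ shows that $true'$ and $false'$ agree after restriction to $G$, hence $G$ factors through their equalizer $\ext{True'=False'}$, and the proof concludes exactly as yours does. If you wish to keep your route, the statement you must actually establish is the instance $G\leq\square G$ of Lemma \ref{FreydMap}; it is cleaner to prove it once and for all by the gluing construction than to relativise the diagonal argument one level down.
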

\begin{proof}
We have the diagram
\[
\begin{tikzcd}
\ext{n \in D} \arrow[d, hook] \arrow[r,hook]& 1\arrow[d, hook, "false'"]\\
1 \arrow[r, "\ext{n\in D}' " ]& \mathcal{P}'1'
\end{tikzcd}
\]
we also have 
\[
\begin{tikzcd}
\ext{n\in D} \arrow[r, hook] \arrow[d,hook] & 1 \arrow[d, hook, "true' "]\\
1 \arrow[r, "\ext{n\in D}' "]& \mathcal{P}'1'
\end{tikzcd}
\]
this follows from the canonical map $\ext{n\in D} \to \Gamma(\ext{n\in D}')=\ext{\ext{n\in D}'=True'}$.
In turn the above implies that
\[
\begin{tikzcd}
\ext{ n\in D} \arrow[r,hook] &1 \arrow[r, shift left=1.5ex, "false' "] \arrow[r,swap,  "true' "]&
\mathcal{P}'1'
\end{tikzcd}
\]
commutes. We will show that it is also an equalizer.
Let
\[
\begin{tikzcd}
Z \arrow[r,"!_Z"] &1 \arrow[r, shift left=1.5ex,"false'"] \arrow[r,swap, "true'"]& \mathcal{P'}1'
\end{tikzcd}
\]
be a commutative diagram. The object $P1$ is an internal poset with least element $false'$ and largest element $true'$. Therefore the externalization $\hom(Z, \mathcal{P'}1')$ is also a poset with bottom $false'\circ\ !_Z$ and top $true'\circ\ !_Z$. Also $false'\circ\ !_Z \leq \ext{n\in D}' \leq true'\circ\ !_Z = false'\circ\ !_Z$. Thus $\ext{n\in D}' = true'\circ\ !_Z = false'\circ\ !_Z$. By the above pullback $!_Z$ factors through $\ext{ n\in D}$.

If $\ext{n\in D}$ equals $0=False$ then $\ext{n\in D} $ is the pullback of $false'$ along $false'$,hence we conclude $\ext{n\in D}$ equals $1=True$. This contradicts the consistency of $\mathcal{U}_0$. 
It follows that $\ext{n\in D}=\ext{True'=False'}$ is not $0$ and the consistency of $\mathcal{U}_0'$ is not provable.
\end{proof}

\section{Lob's Theorem}
This is a sketch of a proof of Lob's theorem in Arithmetic Universes. For now, the reader can assume that $U=U_0$ is the initial arithmetic universe. 

To give a proof of Lob's theorem we will need to be able to state Lob's Theorem.
\begin{defn}
We say that $\phi$ implies $\psi$ in $U$, written $U \models \phi \vdash \psi$, if there is an inclusion of subobjects $\phi \hookrightarrow \psi$. We interpret $\phi$ as the judgement $\top \vdash \phi$. 
\end{defn}
Notice that as defined above, a judgement $\phi \vdash \psi$ is not itself a proposition. It is therefore not immediately apparent how to interpret the (conceptually distinct) sentences like $(\phi \rightarrow \psi) \rightarrow \chi$ or $\phi \rightarrow (\psi \rightarrow \chi)$. 

An arithmetic universe is typically not cartesian closed, so the usual way of talking about implication will not work here. 
Let $U$ be an arithmetic universe, let $\phi,\psi \hookrightarrow 1$ be monomorphisms/propositions. Let $U[\phi \to \psi]=U[\phi \leq \psi]$ be the classifying AU of $\phi\leq \psi$. We have an adjoint pair of AU functors
\[
\begin{tikzcd}
\quad U
\arrow[r, "i^{\ast}"{name=F}, bend left=25] &
\quad U[\phi \leq \psi]
\arrow[l, "i_{\ast}"{name=G}, bend left=25]
\arrow[phantom, from=F, to=G, "\dashv" rotate=-90, shift left=1.5ex]
\end{tikzcd}
\]
Moreover, we also know $U[\top \leq \phi]=u[\phi]=U/\phi$, we may then identify $i^{\ast}$ with the pullback along $i: \phi \hookrightarrow 1$
\begin{defn}
Given two judgements $\phi \vdash \psi$ and $\sigma \vdash \tau$ we interpret implication of judgements $(\phi \vdash \psi) \rightarrow (\sigma \vdash \tau)$ as the statement that there is a morphism of subobjects $i^*(\sigma) \hookrightarrow i^*(\tau)$ in $U[\phi \leq \psi]$.
\end{defn}
\begin{defn}
We say $U\models$ $\ulcorner U \models \phi \vdash \psi \urcorner$ if the subobject $Hom(\phi', \psi') \hookrightarrow 1$ is inhabited.
\end{defn}
We will also write $U \models \square (\phi \vdash \psi) $ for $U\models$ $\ulcorner U \models \phi \vdash \psi \urcorner$.
\begin{thm}[L\"ob] Let $U$ as above, let $\phi \hookrightarrow 1$ be a given proposition (=monomorphism).
Then $U \models \square \phi \vdash \phi $ implies $U\models \phi$.
\end{thm}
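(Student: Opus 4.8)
The plan is to transport the classical Henkin--L\"ob argument into the arithmetic universe, using the diagonalisation machinery already developed for G\"odel's theorems and the Freyd-cover map of Lemma~\ref{FreydMap} as a categorical substitute for the Hilbert--Bernays--L\"ob derivability conditions. \emph{Step~1: the L\"ob fixed point.} Just as the G\"odel sentence arose as $\ext{n\in D}$ for a suitable $D\hookrightarrow\mathbb{N}$, I would build a proposition $\psi\hookrightarrow 1$ in $U_0$ that is a fixed point of ``$\square(-)\to\phi$''. Using the enumeration $e:\mathbb{N}\twoheadrightarrow\mathcal{P}'\mathbb{N}'$ and projectivity of $1$ from the previous Proposition, together with $\iota:\mathbb{N}\to[1',\mathbb{N}']$, one replaces the classifying map $false'$ of the G\"odel construction by the endomap $t_\phi:\mathcal{P}'1'\to\mathcal{P}'1'$ internalising ``code $x\mapsto$ code of $(\square x\to\phi)$''. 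Diagonalising $\mathbb{N}\xrightarrow{\Delta}\mathbb{N}\times\mathbb{N}\xrightarrow{\iota\times e}[1',\mathbb{N}']\times\mathcal{P}'\mathbb{N}'\xrightarrow{\mathrm{eval}}\mathcal{P}'1'\xrightarrow{t_\phi}\mathcal{P}'1'$, pulling back $false'$, and lifting along $e$ by projectivity, yields $D_\phi\hookrightarrow\mathbb{N}$ and a point $n:1\to\mathbb{N}$ with $e(n)=D_\phi'$; set $\psi:=\ext{n\in D_\phi}$. The construction is rigged so that both directions of $U\models\psi\dashv\vdash(\square\psi\to\phi)$ hold, where ``$\square\psi\to\phi$'' is read, as in the definition of implication of judgements, inside the classifying AU $U[\square\psi\leq(-)]$ via the pullback functor $i^{\ast}$.

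\emph{Step~2: the derivability conditions.} Three categorical facts replace $(\mathrm{D}1)$--$(\mathrm{D}3)$. Necessitation: since $R:U_0\to Ext(\mathbb{U}_0')$ is a morphism of arithmetic universes and $\Gamma$ preserves finite limits, $\square=\Gamma\circ R$ sends monos to monos and is monotone on $\mathrm{Sub}(1)$; moreover Lemma~\ref{FreydMap} supplies the unit $\eta_A:A\to\square A$, so $U\models\alpha\vdash\beta$ gives $U\models\square\alpha\vdash\square\beta$ and $U\models\square(\alpha\vdash\beta)$. Distribution: $\square$ preserves finite limits, hence $\square(\alpha\wedge\beta)=\square\alpha\wedge\square\beta$, and combined with monotonicity this yields $\square(\alpha\to\beta)\wedge\square\alpha\vdash\square\beta$ in the relevant classifying AU. Transitivity (the ``$4$'' axiom): applying Lemma~\ref{FreydMap} to the object $\square\alpha\in U_0$ gives $\square\alpha\to\square\square\alpha$, i.e. $U\models\square\alpha\vdash\square\square\alpha$.

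\emph{Step~3: the chain.} Now reproduce the classical derivation. From Step~1, $U\models\psi\vdash(\square\psi\to\phi)$; necessitation gives $U\models\square\psi\vdash\square(\square\psi\to\phi)$; two applications of distribution give $U\models\square\psi\vdash(\square\square\psi\to\square\phi)$; the ``$4$'' axiom gives $U\models\square\psi\vdash(\square\psi\to\square\phi)$, whence $U\models\square\psi\vdash\square\phi$; the hypothesis $U\models\square\phi\vdash\phi$ then yields $U\models\square\psi\vdash\phi$. Feeding this into the other direction of the fixed point makes $\square\psi\to\phi$ the maximal subobject, i.e. $U\models\psi$; since $\square 1=\Gamma(1')=1$ this forces $\square\psi=1$, and with $U\models\square\psi\vdash\phi$ we get $\phi=1$, i.e. $U\models\phi$.

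\emph{Main obstacle.} The real difficulty is Step~1: an arithmetic universe is not cartesian closed, so ``$\square\psi\to\phi$'' is not literally a subobject of $1$, and the self-referential clause defining $\psi$ must be arranged so that it both makes sense and genuinely diagonalises. One either works throughout with implications-of-judgements interpreted via the classifying AUs $U[\phi\leq\psi]$ and checks the Kripke--Joyal bookkeeping descends, or --- cleaner --- rephrases the whole argument so that implication is never formed as an object, replacing ``$\alpha\to\phi$'' by the adjoint statement ``$\alpha\wedge(-)\vdash\phi$'' and using $i^{\ast}\dashv i_{\ast}$ for $U[\alpha\leq(-)]$ to carry the single classical-looking inference (``$\square\psi\leq\phi$ forces $\square\psi\to\phi$ to be $\top$'') intuitionistically. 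A secondary point of care is exhibiting the endomap $t_\phi$ internally, since $\mathbb{U}_0'$ has no internal power object and $\mathcal{P}'1'$ lives one level up in $U_0$.
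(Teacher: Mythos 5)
Your proposal follows essentially the same route as the paper: construct the L\"ob sentence as a fixed point of $\beta\mapsto\ulcorner\square\beta\to\phi\urcorner$ via the diagonalisation lemma (the enumeration $e$ plus projectivity of $1$), establish necessitation, the $4$-axiom and distribution from the Freyd-cover unit $\eta_A:A\to\square A$, and then run the classical L\"ob chain. The only notable difference is that the paper justifies the distribution step via the evaluation map $Hom(\phi',\psi')\times Hom(1',\phi')\to Hom(1',\psi')$ inside the classifying AU $U[Hom(\phi',\psi')]$ rather than via limit-preservation of $\square$, which is exactly the cartesian-closure issue you flag in your final paragraph.
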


We will need to verify a number of properties of the implication as defined as above.

\begin{prop} \label{A1}
If $U\models \phi$ then $U\models \square \phi $
\end{prop}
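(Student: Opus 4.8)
The plan is to unwind both $U\models\phi$ and $U\models\square\phi$ into statements about subobjects of $1$ in $U$ (which we take to be $U_0$, as agreed at the start of the section), and then connect them through the Freyd-cover map of Lemma~\ref{FreydMap}. First I would unpack the hypothesis. By definition $U\models\phi$ means $U\models\top\vdash\phi$, i.e.\ there is an inclusion of subobjects $\top\hookrightarrow\phi$; since $\top=1$ and $\phi\hookrightarrow1$ is already a monomorphism, this forces $\phi\cong1$. In particular $\phi$ is inhabited, and the canonical map $!\colon1\to\phi$ is a section of $\phi\hookrightarrow1$.

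Next I would unpack the conclusion. Write $\phi'=R(\phi)$ for the image of the proposition $\phi$ under the initial AU-functor $R\colon U_0\to Ext(\mathbb{U}_0')$; since $R$ preserves finite limits it preserves monomorphisms, so $\phi'\hookrightarrow1'$ is again a proposition, classified by a map $A\colon1\to\mathcal{P}'1'$. By the definitions preceding the statement, $\square\phi=\square(\top\vdash\phi)$, and $U\models\square\phi$ holds precisely when the subobject $Hom_{\mathbb{U}_0'}(1',\phi')=\Gamma(\phi')\hookrightarrow1$ is inhabited; here one uses that $\mathbb{U}_0'(1',-)$ preserves monomorphisms, so $\Gamma(\phi')$ really is a subobject of $\Gamma(1')=1$. (If convenient, one can instead phrase $\square\phi$ as the assertion $\ext{A=True'}=1$, via the Proposition identifying $\Gamma(A)$ with $\ext{A=True'}$.)

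The two unpackings are joined directly by Lemma~\ref{FreydMap}, which supplies a map $\eta_\phi\colon\phi\to\Gamma(\phi')=\square\phi$. Composing it with the section $1\to\phi$ obtained from the hypothesis yields a global point $1\to\Gamma(\phi')$, so the subobject $\Gamma(\phi')\hookrightarrow1$ is inhabited, which is exactly $U\models\square\phi$. Since every step is a direct translation, I expect no genuine obstacle; the only thing needing care is the bookkeeping identifying the object $Hom(\phi',\psi')$ from the definition of $\square$ with the object $\Gamma(\phi')$ appearing in Lemma~\ref{FreydMap}, together with the harmless observation that an inhabited subobject of $1$ is automatically the maximal one.
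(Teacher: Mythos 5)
Your proof is correct and follows exactly the paper's route: the paper's entire proof reads ``Immediate from Lemma~\ref{FreydMap},'' and your argument is precisely the careful unwinding of that one line --- use $\eta_\phi\colon\phi\to\Gamma(\phi')=\square\phi$ and the fact that $U\models\phi$ makes $\phi$ the maximal subobject of $1$ to produce a global point of $\square\phi$. The extra bookkeeping you supply (identifying $\square\phi$ with $\Gamma(\phi')$ and noting that an inhabited subobject of $1$ is maximal) is exactly what the paper leaves implicit.
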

\begin{proof} Immediate from lemma \ref{FreydMap}.
\end{proof}

\begin{prop}\label{A2}
$U\models\square \phi  \vdash \square (\square \phi)  $
\end{prop}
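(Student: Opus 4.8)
The plan is to derive the statement almost immediately from Lemma \ref{FreydMap}, once we observe that $\square$ carries propositions to propositions, so that $\square(\square\phi)$ even makes sense and the monic half of ``inclusion of subobjects'' comes for free. First I would note that, for the given proposition $\phi\hookrightarrow 1$, the subobject $\phi'=R(\phi)\hookrightarrow 1'$ is classified by a name $A\colon 1\to\mathcal{P}(1')$, so by the Proposition identifying $\Gamma(A)=\mathbb{U}'_0(1',A)=\ext{A=True'}$, the object $\square\phi=\Gamma(\phi')=\Gamma(A)$ is presented as the equalizer $\ext{A=True'}\hookrightarrow 1$ and is therefore again a subterminal object of $U$, i.e. a proposition. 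Consequently $\square(\square\phi)=\Gamma((\square\phi)')$ is defined, and unwinding the definition of $\vdash$, the assertion $U\models\square\phi\vdash\square(\square\phi)$ amounts to producing \emph{any} morphism $\square\phi\to\square(\square\phi)$ in $U$: a morphism between two subterminal objects is automatically monic and commutes with the unique maps to $1$, hence is an inclusion of subobjects of $1$.

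Next I would simply apply Lemma \ref{FreydMap} to the object $A:=\square\phi$ of $U$. It yields the Freyd (reflection) map $\eta_{\square\phi}\colon\square\phi\to\Gamma((\square\phi)')=\square(\square\phi)$. By the preceding remark this is an inclusion $\square\phi\hookrightarrow\square(\square\phi)$ of subobjects of $1$, which is precisely the meaning of $U\models\square\phi\vdash\square(\square\phi)$, completing the proof.

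Conceptually this is the categorical incarnation of the second Hilbert--Bernays derivability condition $\square\phi\to\square\square\phi$: the map $\eta$ furnished by the Freyd cover is an internalised form of the reflection principle ``if $A$ then $A$ is provable'' already used for Proposition \ref{A1}, and instantiating it at the proposition $\square\phi$ formalises that principle one level up. The only step requiring genuine (if brief) care --- and the only place the argument could stumble --- is the well-typedness point, that $\square$ really lands in subterminal objects, so that $\square(\square\phi)$ is legitimate and the monomorphism is automatic; this is exactly what the identification $\Gamma(A)=\ext{A=True'}$ delivers. No new diagram chase or Kripke--Joyal computation is needed.
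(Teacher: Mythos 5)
Your proof is correct, but it takes a slightly different route from the paper's. The paper's proof obtains the map $\square\phi\to\square(\square\phi)$ by applying the functor $\square=\Gamma\circ R$ to the Freyd map $\eta_\phi\colon\phi\to\square\phi$ of Lemma \ref{FreydMap}; you instead instantiate the Freyd map at the object $\square\phi$ itself, taking $\eta_{\square\phi}\colon\square\phi\to\Gamma((\square\phi)')=\square(\square\phi)$. These are in general different morphisms (they are the two legs $\square\eta_\phi$ and $\eta_{\square\phi}$ of the naturality square for $\eta$), but since the target is subterminal either one yields the required inclusion of subobjects. Your version is the categorical analogue of the classical derivation of the second Hilbert--Bernays condition via formalized $\Sigma_1$-completeness (reflection instantiated at the provability sentence), and it has the small advantage of using only the object-wise existence of $\eta$ rather than the functoriality of $\square$ on morphisms; the paper's version exploits the fact that in this setting $\eta$ exists for \emph{all} objects, so one may equally well box the reflection map. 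You also supply a well-typedness check the paper leaves implicit: that $\square\phi=\Gamma(\phi')$ is again subterminal (via the identification $\Gamma(A)=\ext{A=True'}$), so that $\square(\square\phi)$ makes sense and any morphism between the two subterminals is automatically an inclusion of subobjects of $1$. This is a worthwhile addition, since the statement $U\models\square\phi\vdash\square(\square\phi)$ is only meaningful under that observation.
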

\begin{proof}
Apply the functor $\square=\Gamma \circ R$ to $\phi \to \square \phi $.
\end{proof}

\begin{prop}[Modus Ponens] \label{ModusPonens}
$U \models \phi $ and $U \models \phi \vdash \psi $ then $U \models \psi$.
\end{prop}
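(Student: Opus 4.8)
The plan is to unwind the two hypotheses into statements about the poset of subobjects of the terminal object and then appeal to transitivity. Recall that since an arithmetic universe $U$ has finite limits, the subobjects of $1$ form a poset $\mathrm{Sub}_U(1)$ ordered by inclusion, with top element $\top = (1 \xrightarrow{\;\mathrm{id}\;} 1)$ and bottom element $\bot = (0 \hookrightarrow 1)$.

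First I would unwind $U \models \phi$. By the stated convention that a bare proposition $\phi$ is read as the judgement $\top \vdash \phi$, the hypothesis $U \models \phi$ means $U \models \top \vdash \phi$, which by the definition of $U \models (-\vdash-)$ asserts the existence of an inclusion of subobjects $\top \hookrightarrow \phi$, i.e. $\top \le \phi$ in $\mathrm{Sub}_U(1)$; as $\top$ is already maximal this is equivalent to saying $\phi$ is (isomorphic to) the maximal subobject of $1$. Similarly, $U \models \phi \vdash \psi$ unwinds directly to an inclusion $\phi \hookrightarrow \psi$, i.e. $\phi \le \psi$ in $\mathrm{Sub}_U(1)$.

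Then I would simply compose: from $\top \le \phi$ and $\phi \le \psi$, transitivity of the order on $\mathrm{Sub}_U(1)$ gives $\top \le \psi$; concretely, composing the monomorphisms $1 \xrightarrow{\;\sim\;} \phi \hookrightarrow \psi \hookrightarrow 1$ exhibits $\psi$ as the maximal subobject of $1$. Hence $U \models \top \vdash \psi$, that is, $U \models \psi$. There is essentially no obstacle here — the statement is purely formal once the definitions are expanded — so the only care needed is to reconcile the two uses of the $\models$/turnstile notation (for bare propositions versus judgements) via the stated convention, and to note that ``inclusion of subobjects'' really is the order relation of the poset $\mathrm{Sub}_U(1)$, so that composition of the relevant monos yields the desired inclusion.
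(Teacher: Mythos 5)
Your proof is correct and is simply the explicit unwinding of what the paper dismisses as ``Immediate from the definitions'': reading $U \models \phi$ as $\top \le \phi$ and $U \models \phi \vdash \psi$ as $\phi \le \psi$ in $\mathrm{Sub}_U(1)$, then composing by transitivity. Same argument, just written out.
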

\begin{proof}
Immediate from the definitions. 
\end{proof}
\begin{prop}[Internal Modus Ponens] \label{A3}
$U \models \ulcorner \ulcorner U \models \phi \vdash \psi \urcorner \rightarrow ( \square \phi \vdash \square \psi ) \urcorner$.
\end{prop}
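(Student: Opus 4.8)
The plan is to unwind the two nested corner-quotes, reduce the claim to a single inclusion of subobjects in $U$, and then produce that inclusion from the internal composition law of $\mathbb{U}_0'$. So first I would fix notation: write $P := \ulcorner U \models \phi \vdash \psi \urcorner$, which by definition is the subobject $\mathrm{Hom}(\phi',\psi') \hookrightarrow 1$ of $U$. As the antecedent of an implication of judgements, $P$ is read as the judgement $\top \vdash P$, so the classifying AU of the hypothesis is $U[\top \leq P] = U/P$ and $i^{*}$ is pullback along $i\colon P \hookrightarrow 1$. By the definition of implication of judgements, the inner implication $\ulcorner U \models \phi \vdash \psi \urcorner \to (\square\phi \vdash \square\psi)$ then unwinds to the assertion that there is an inclusion $i^{*}(\square\phi) \hookrightarrow i^{*}(\square\psi)$ in $U/P$, i.e.\ that $P \wedge \square\phi \leq \square\psi$ as subobjects of $1$ in $U$. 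In other words, as a statement about $U$ the inner implication is exactly the judgement $K := \big(P \wedge \square\phi \vdash \square\psi\big)$, so the outer corner-quote asks us to prove $U \models \ulcorner K \urcorner$, that is, that $\mathrm{Hom}\big((P \wedge \square\phi)',(\square\psi)'\big)$ is inhabited.

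Next I would reduce this to an inclusion living in $U$. The initial AU functor $R\colon U_0 \to U_0' = \mathrm{Ext}(\mathbb{U}_0')$ preserves finite limits and monomorphisms, so it carries any inclusion $\sigma \hookrightarrow \tau$ of subobjects in $U$ to an inclusion $\sigma' \hookrightarrow \tau'$ in $U_0'$, hence to a point of $\mathrm{Hom}(\sigma',\tau')$; this is the judgement-level counterpart of Proposition~\ref{A1} and is in fact more direct, since no use of $\Gamma$ is needed. Applying this with $\sigma = P \wedge \square\phi$ and $\tau = \square\psi$ (and $R(P \wedge \square\phi) = R(P) \wedge R(\square\phi) = (P \wedge \square\phi)'$), it suffices to exhibit the inclusion $P \wedge \square\phi \leq \square\psi$ in $U$.

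Finally, to produce that inclusion I would use the internal category structure of $\mathbb{U}_0'$. Recall $P = \mathrm{Hom}(\phi',\psi')$ and, identifying global sections with homs out of the terminal object, $\square\phi = \Gamma(\phi') = \mathrm{Hom}(1',\phi')$ and $\square\psi = \Gamma(\psi') = \mathrm{Hom}(1',\psi')$. The internal category $\mathbb{U}_0'$ carries a composition morphism $\circ\colon (\mathbb{U}_0')_1 \times_{(\mathbb{U}_0')_0} (\mathbb{U}_0')_1 \to (\mathbb{U}_0')_1$. The product $\mathrm{Hom}(1',\phi') \times \mathrm{Hom}(\phi',\psi')$ maps canonically into the object of composable pairs with shared middle object $\phi'$; composing with $\circ$ and using the compatibility of $\circ$ with domain and codomain, the result factors through $\mathrm{Hom}(1',\psi')$. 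This gives a morphism $\square\phi \times P \to \square\psi$ in $U$, and since $\square\psi \hookrightarrow 1$ this morphism is exactly the inclusion $P \wedge \square\phi \leq \square\psi$. Together with the previous step this yields $U \models \ulcorner K \urcorner$, as required.

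I expect the main obstacle to be the bookkeeping rather than any deep categorical fact: getting the two nested corner-quotes and the $\top$-antecedent form of the implication-of-judgements definition to unwind correctly in the first step, and keeping straight in the last step which universe each hom-object inhabits — the base $U_0$, the internal $\mathbb{U}_0'$, or its externalization $\mathrm{Ext}(\mathbb{U}_0')$. The only genuinely categorical point, that internal composition restricts to the subterminal hom-objects $\mathrm{Hom}(1',\phi')$, $\mathrm{Hom}(\phi',\psi')$ and $\mathrm{Hom}(1',\psi')$, is immediate from the internal-category axioms relating $\circ$ to $\mathrm{dom}$ and $\mathrm{cod}$.
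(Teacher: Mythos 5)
Your core argument is the same as the paper's: both proofs produce the required arrow $i^{*}(\square\phi) \to i^{*}(\square\psi)$ over $Hom(\phi',\psi')$ from the evaluation/composition map $ev\colon Hom(\phi',\psi') \times Hom(1',\phi') \to Hom(1',\psi')$ of the internal category, using that $i^{*}(\chi) = Hom(\phi',\psi') \times \chi$ for subterminal $\chi$. The one place you go further is the outer corner-quote: the paper's proof stops once the arrow $i^{*}(\square\phi)\to i^{*}(\square\psi)$ is exhibited, i.e.\ it reads the proposition as the external implication of judgements, whereas you additionally internalize the resulting inclusion $P \wedge \square\phi \leq \square\psi$ by applying $R$ to obtain a point of $Hom((P\wedge\square\phi)',(\square\psi)')$. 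That extra step is sound (an inclusion in $Ext(\mathbb{U}_0')$ between the relevant subterminal objects is precisely a global point of the corresponding internal hom-object, and $R$ preserves finite limits and monomorphisms), and it is arguably the more faithful reading of the statement as written; the paper simply does not discharge that outer quotation.
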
       
\begin{proof}
Let $U[Hom(\phi',\psi')]=U/Hom(\phi',\psi')$ denote the classifying AU of the proposition $Hom(\phi',\psi')$.
Write out to see that $\square(\phi)=\Gamma(\phi')=Hom(1',\phi')$. We have an evaluation map $ev: Hom(\phi',\psi')\times Hom(1',\phi') \to Hom(1',\psi')$. Remark that $i^{\ast}(\chi)=Hom(\phi',\psi') \times \chi$.

We want to prove that there is an arrow $i^{\ast}(\square \phi) \to i^{\ast}(\square \psi)$. 
We have the following diagram   
\[
\begin{tikzcd}
Hom(\phi',\psi')\times \square(\phi) \arrow[dr] \arrow[rr, "<id{,}ev>"] & & Hom(\phi',\psi')\times \square(\psi)\arrow[dl]\\
&Hom('\phi', \psi') & 
\end{tikzcd}
\]
Hence there is an arrow $i^{\ast}(\square \phi) \to i^{\ast}(\square \psi)$. 
\end{proof}
\begin{prop}\label{B1}
$U \models \phi \vdash \psi $ and $ U \models \psi \vdash \chi $ then $U\models \phi \vdash\chi$
\end{prop}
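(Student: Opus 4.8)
The plan is to unwind the definition of $U \models \phi \vdash \psi$ recorded just above the statement: it asserts the existence of an inclusion of subobjects $\phi \hookrightarrow \psi$ of the terminal object $1$. Since $\phi$, $\psi$, $\chi$ are all monomorphisms into $1$, they are objects of the poset $\mathrm{Sub}_U(1)$ of subobjects of $1$, and an ``inclusion of subobjects'' is precisely the order relation $\phi \leq \psi$ in this poset. So the two hypotheses say exactly $\phi \leq \psi$ and $\psi \leq \chi$ in $\mathrm{Sub}_U(1)$.

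The key (and only) step is then transitivity of the partial order on $\mathrm{Sub}_U(1)$. Concretely, if $i \colon \phi \hookrightarrow \psi$ and $j \colon \psi \hookrightarrow \chi$ are the witnessing monomorphisms over $1$, then the composite $j \circ i \colon \phi \to \chi$ is again a monomorphism (a composite of monos is mono) and commutes with the structure maps to $1$, hence exhibits $\phi$ as a subobject of $\chi$. This yields an inclusion $\phi \hookrightarrow \chi$, i.e.\ $U \models \phi \vdash \chi$.

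I anticipate no real obstacle here: the assertion is just the transitivity of the subobject relation on $1$, and since $\mathrm{Sub}_U(1)$ is a poset the witnessing inclusion is moreover unique, so there is no coherence bookkeeping to do. Accordingly I would simply state that the claim is immediate from the definition of $\vdash$ as the subobject order, spelling out the one-line composition of monomorphisms above for completeness.
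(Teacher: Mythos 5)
Your argument is correct and matches the paper's intent exactly: the paper's proof is simply ``Immediate,'' and what it leaves implicit is precisely your observation that the hypotheses amount to $\phi \leq \psi \leq \chi$ in the poset $\mathrm{Sub}_U(1)$, so composing the witnessing monomorphisms gives the conclusion. Nothing further is needed.
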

\begin{proof}
Immediate.
\end{proof}
\begin{prop}\label{B2}
$U\models \phi \vdash \psi$ and $U\models \phi \rightarrow (\psi \vdash \chi)$ implies $U\models \phi \vdash \chi$
\end{prop}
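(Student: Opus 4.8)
The plan is to translate the two hypotheses into inequalities in the poset of subobjects of the terminal object of the slice $U[\top \le \phi] = U/\phi$, and then to conclude by a short order-theoretic step.

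First I would record the relevant unwindings. Write $i \colon \phi \hookrightarrow 1$ for the inclusion; using the identification $U[\top \le \phi] = U/\phi$ recalled above, the structure functor into this slice is the pullback $i^{\ast}$ along $i$. Since $\psi \hookrightarrow 1$ and $\chi \hookrightarrow 1$ are monos and $i^{\ast}$ preserves monos and the terminal object, $i^{\ast}(\psi)$ and $i^{\ast}(\chi)$ are subobjects of $\top_{U/\phi} = \mathrm{id}_{\phi}$; concretely they are $(\phi \times_1 \psi \to \phi)$ and $(\phi \times_1 \chi \to \phi)$. Identifying $\mathrm{Sub}_{U/\phi}(\top)$ with $\mathrm{Sub}_{U}(\phi)$, these correspond to the meets $\phi \wedge \psi$ and $\phi \wedge \chi$ computed in $\mathrm{Sub}_U(1)$ and then regarded as subobjects of $\phi$. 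By definition, $U \models \phi \rightarrow (\psi \vdash \chi)$ means there is a mono $i^{\ast}(\psi) \hookrightarrow i^{\ast}(\chi)$ in $U/\phi$, i.e. $\phi \wedge \psi \le \phi \wedge \chi$ in $\mathrm{Sub}_U(\phi)$.

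Next I would feed in the first hypothesis. From $U \models \phi \vdash \psi$ we get $\phi \hookrightarrow \psi$ as subobjects of $1$, hence $\phi \wedge \psi = \phi$; that is, $i^{\ast}(\psi)$ is the top element of $\mathrm{Sub}_U(\phi)$. Combining, $\phi = \phi \wedge \psi \le \phi \wedge \chi \le \phi$, so $\phi \wedge \chi = \phi$, which says exactly that $\phi \hookrightarrow \chi$ as subobjects of $1$, i.e. $U \models \phi \vdash \chi$, as required.

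I do not expect an essential obstacle here: once the bookkeeping identifications $i^{\ast}(\psi) \cong \phi \wedge \psi$ and $\mathrm{Sub}_{U/\phi}(\top) \cong \mathrm{Sub}_U(\phi)$ are in place, the conclusion is forced because a subobject lattice is a poset. The only point deserving a line of care is checking that the ``morphism of subobjects'' appearing in the definition of $\phi \rightarrow (\psi \vdash \chi)$ really does lie between subobjects of the terminal object of $U/\phi$ — which is immediate since $\psi$ and $\chi$ are monos into $1$ and $i^{\ast}$ preserves monos and $1$ — so that the poset collapse can be applied.
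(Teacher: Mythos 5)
Your argument is correct and is essentially the paper's own proof recast in order-theoretic language: the paper composes the explicit morphisms $p_2\circ b\circ \langle \mathrm{id}_\phi,a\rangle:\phi\to\chi$ in $U/\phi$, which is exactly the witness for your chain $\phi=\phi\wedge\psi\le\phi\wedge\chi\le\chi$ in the subobject lattice. The identifications $i^{\ast}(\psi)\cong\phi\wedge\psi$ and $\mathrm{Sub}_{U/\phi}(\top)\cong\mathrm{Sub}_U(\phi)$ that you flag are the same bookkeeping the paper performs implicitly, so no further justification is needed.
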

\begin{proof}
Consider the classifying arithmetic universe $U[\phi]=U/\phi$. By assumption we have a morphism $a: \phi \to \psi$ and a morphism $b: i^{\ast}\psi=\phi \times \psi \to i^{\ast} \chi=\phi \times \chi $ over $id_{\phi}: \phi \to \phi$.
The composition $p_2\circ b \circ <id_{\phi},a>: \phi \to \chi$ furnishes the required morphism. Here $p_2$ is the projection on the second coordinate. 
\end{proof}

We have the following lemma, the AU-incarnation of the diagonalization lemma. It is reminiscent of Lawvere's fixed point theorem. 
\begin{defn}
Let $f: X \to X$ be a map. Let $x:1 \to X$ be a global point. We say $x$ is a fixed point of $f$ if $f(x):=f \circ x =x.$
\end{defn}
\begin{lem} \label{DiagonalizationLemma}
Let $T: \mathcal{P}1'\to \mathcal{P}1'$ be a map in $U$. Then $T$ has a fixed point.
\end{lem}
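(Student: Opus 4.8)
The plan is to run Lawvere's fixed-point argument, with the (external) enumeration \(e\colon\mathbb{N}\twoheadrightarrow\mathcal{P}'\mathbb{N}'\), the formal-numeral map \(\iota\colon\mathbb{N}\to[1',\mathbb{N}']\) and the evaluation \(eval\colon[1',\mathbb{N}']\times\mathcal{P}'\mathbb{N}'\to\mathcal{P}'1'\) standing in for the exponential that \(U\) lacks, and with projectivity of \(1\) playing the role of ``point-surjectivity''. This is the same engine that drives the proof of the First Incompleteness Theorem above. First I would form the diagonal map
\[
\delta \;:=\; eval\circ(\iota\times e)\circ\Delta_{\mathbb{N}}\;\colon\;\mathbb{N}\longrightarrow\mathcal{P}'1',
\]
so that informally \(\delta(m)\) is ``the \(m\)-th one-variable formula evaluated at its own numeral \(m'\)'', and then post-compose with \(T\) to obtain \(T\circ\delta\colon\mathbb{N}\to\mathcal{P}'1'\).

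The crux is to exhibit a predicate \(\theta\colon 1\to\mathcal{P}'\mathbb{N}'\) that \emph{represents} \(T\circ\delta\), in the sense that \(eval(\iota(m),\theta)=T(\delta(m))\) for every global element \(m\colon 1\to\mathbb{N}\) (equivalently, as a diagram of arrows, \(eval\circ(\iota\times(\theta\circ !_{\mathbb{N}}))\circ\langle\mathrm{id}_{\mathbb{N}},!_{\mathbb{N}}\rangle=T\circ\delta\)). Granting this, the argument closes exactly as in the Gödel proof: since \(e\) is an epimorphism and \(1\) is projective, lift \(\theta\) through \(e\) to a code \(n_{0}\colon 1\to\mathbb{N}\) with \(e\circ n_{0}=\theta\); then
\[
\delta(n_{0})=eval(\iota(n_{0}),e(n_{0}))=eval(\iota(n_{0}),\theta)=T(\delta(n_{0})),
\]
the last equality being the representing property of \(\theta\) at \(m=n_{0}\) together with \(e(n_{0})=\theta\). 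Hence \(\delta\circ n_{0}\colon 1\to\mathcal{P}'1'\) is a fixed point of \(T\). As in Lawvere's theorem, \(n_{0}\) is nothing but a code for the map \(T\circ\delta\) itself, and evaluating that map at its own code forces the fixed-point equation.

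The main obstacle is the existence of the representing predicate \(\theta\); this is the AU-incarnation of the representability step in the Gödel--Carnap diagonal lemma, and it is where one uses something beyond ``\(T\) is an arrow of \(U\)''. Concretely: \(\delta\) factors through the primitive-recursive enumeration \(e_{0}\colon\mathbb{N}\twoheadrightarrow\mathcal{P}'1'\) of propositions (obtained as \(e\) was) via the primitive-recursive diagonal-substitution function \(d\colon\mathbb{N}\to\mathbb{N}\), so that \(e_{0}\circ d=\delta\); since \(U\cong Syn(\mathcal{T}_{AU})\) is the syntactic category and \(\mathbb{N}\) is projective, the action of \(T\) on these codes is itself carried by a primitive-recursive \(t_{0}\colon\mathbb{N}\to\mathbb{N}\) with \(e_{0}\circ t_{0}=T\circ e_{0}\), whence \(T\circ\delta=e_{0}\circ(t_{0}\circ d)\). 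The delicate point is that \(t_{0}\circ d\), being definable in \(\mathcal{T}_{AU}\), must be internalised by initiality of \(U\) into a genuine predicate \(\theta\) on \(\mathbb{N}'\) rather than into an illegitimate ``satisfaction'' predicate -- Tarski's theorem being the warning that this cannot be done unconditionally. Making this internalisation precise (and thereby pinning down the class of \(T\) it applies to, which includes the operations relevant to Löb's Theorem) is the real work.
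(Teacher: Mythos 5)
Your proposal follows the paper's proof essentially verbatim: the paper forms $g = T\circ eval\circ(\iota\times e)\circ\Delta_{\mathbb{N}}$, produces a code $n\colon 1\to\mathbb{N}$ for a representing predicate by lifting through the epimorphism $e$ using projectivity of $1$, and concludes $T(g(n))=g(n)$ exactly as you do. The one step you explicitly leave open --- the existence of the representing predicate $\theta$ --- is handled in the paper by taking $E$ to be the pullback of $True'$ along $g$ (a genuine subobject of $\mathbb{N}$ in $U$) and setting $\theta = E' = R(E)$, after which the representing property $eval(\iota(t),E')=g(t)$ is simply asserted (``This is equal to $g$''), so the gap you flag as the real work is present in the paper as well, merely stated rather than proved.
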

\begin{proof}
Let $g$ be the composition
\[
\begin{tikzcd}
\mathbb{N}\times \mathbb{N} \arrow[r,"i \times e"] & {[1', \mathbb{N}']} \times \mathcal{P}\mathbb{N}' \arrow[r,"eval"]& \mathcal{P}1' \arrow[d,"T"] \\
\mathbb{N} \arrow[u,"\Delta"] \arrow[rr,"g"] && \mathcal{P}1' 
\end{tikzcd}
\]
Construct $E$ as the pullback 
\[
\begin{tikzcd}
E \arrow[d, hook] \arrow[r] & 1 \arrow[d, "True'"]\\
\mathbb{N} \arrow[r] & \mathcal{P}1'
\end{tikzcd}
\]
By projectivity of $1$ we have a lift 
\[
\begin{tikzcd}
& \mathbb{N} \arrow[d,"e", two heads]\\
1 \arrow[ur, "n", dashed] \arrow[r,"E'"] & \mathcal{P}\mathbb{N}'
\end{tikzcd}
\]
where $E'=R(E), R: U \to Ext(U')$.
Consider the composition 
\[
\mathbb{N} \cong \mathbb{N}\times 1 \xrightarrow{Id\times n} \mathbb{N} \times \mathbb{N}\xrightarrow{f} \mathcal{P}1'
\]
where $f$ denotes the morphism
\[
\mathbb{N} \times \mathbb{N} \xrightarrow{i\times e} [1',\mathbb{N}']\times \mathcal{P}\mathbb{N}'\xrightarrow{eval}\mathcal{P}1
\]
This is equal to $g$.
Finally, the claim is that $g(n)$ is a fixpoint for $T$.
By construction we have $f(t,n)=g(t)$. Therefore, $T(g(n))=T(f(n,n))=g(n)$ by definition of $g$ as the composition in the first square. 
\end{proof}

Let $\phi \hookrightarrow 1$ be a proposition. Consider the operator $T_{\phi}: \mathcal{P}1'\to \mathcal{P}1'$ defined as $T_{\phi}(\beta)=RHom(\square \beta,R(\phi)) $.
Apply the diagonalization lemma to obtain the Lob sentence $L_\phi$. It satisfies $RHom(R\square(L_{\phi}),R(\phi))=L_{\phi}$.

Recall our formulation of Lob's theorem:
\begin{thm} Let $U$ as above, let $\phi \hookrightarrow 1$ be a given proposition (=monomorphism).
Then $U \models (U\models \phi ) \rightarrow \phi $ implies $U\models \phi$.
\end{thm}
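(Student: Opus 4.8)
The plan is to transcribe the classical Hilbert--Bernays--L\"ob argument into the arithmetic universe, using the diagonalization lemma (Lemma~\ref{DiagonalizationLemma}) to manufacture the L\"ob fixed point and then chaining the ``derivability conditions'' recorded in Propositions~\ref{A1}, \ref{A2}, \ref{A3}, \ref{ModusPonens}, \ref{B1} and \ref{B2}, which are precisely the AU-incarnations of the ingredients the classical proof needs. Concretely, first I would fix the L\"ob sentence $L := L_\phi$ obtained by applying Lemma~\ref{DiagonalizationLemma} to the operator $T_\phi(\beta) = RHom(\square\beta, R(\phi))$, so that $T_\phi(L) = L$. Unwinding this fixed-point identity $RHom(R\square L, R\phi) = L$ and transporting it down to a statement about subobjects of $1$ in $U$ --- via the identification $\Gamma(A) = \ext{A=True'}$ and the fact that $R$ and $\Gamma$ are functors --- should exhibit $L$ and $Hom(\square L, \phi)$ as the same subobject of $1$, i.e. the analogue of $\vdash L \leftrightarrow (\square L \to \phi)$. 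Since an inhabited $Hom(\alpha,\beta)$ is exactly the judgement $\alpha \vdash \beta$, this gives in particular: (i) if $U \models \square L \vdash \phi$ then $U\models L$; and (ii) $U \models L \rightarrow (\square L \vdash \phi)$, the latter witnessed by the evaluation map $Hom(\square L, \phi) \wedge \square L \to \phi$ in the classifying AU $U[L] = U/L$.

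Next I would run the L\"ob chain. Applying $\square = \Gamma \circ R$ functorially to (ii) and invoking the internal modus ponens of Proposition~\ref{A3} (to push the box through the implication) together with Proposition~\ref{A2} (to absorb the resulting double box $\square\square L$ using $U\models\square L \vdash \square\square L$) and Proposition~\ref{B2}, one obtains $U \models \square L \vdash \square\phi$. Chaining this with the hypothesis $U\models\square\phi\vdash\phi$ via Proposition~\ref{B1} yields $U \models \square L \vdash \phi$. Feeding this into (i) gives $U \models L$. Then Proposition~\ref{A1} upgrades this to $U\models\square L$, and a final application of modus ponens (Proposition~\ref{ModusPonens}) to $U\models\square L$ and $U\models\square L\vdash\phi$ delivers $U\models\phi$, as required.

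The logical skeleton is therefore a faithful copy of the classical proof, and the real work is in the bookkeeping of the ``levels''. The operator $T_\phi$ already mixes $U$, its internalisation $U' = Ext(\mathbb{U})$ and a further internal copy, so the fixed point $L_\phi$ literally lives as a subobject of $1'$ (indeed of a once-more-primed unit), and Step~1 --- bringing the fixed-point identity back down to an honest relation between propositions of $U$ --- is exactly the place where one has to be careful, leaning on the proposition identifying $\Gamma(A)$ with $\ext{A=True'}$ and on the fact that $\Gamma$ and $R$ preserve the relevant structure even though $\Gamma$ is not an AU-morphism. One also has to pin down that ``$Hom(-,-)$ of subterminal objects'' behaves as a well-enough-behaved implication connective (an inhabited $Hom(\alpha,\beta)$ coinciding with $\alpha\vdash\beta$, and $\square$ distributing over it in the form encoded by Proposition~\ref{A3}) for the chaining in Step~2 to be legitimate. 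Once these coherences are settled, every individual inference in the chain is a verbatim instance of one of Propositions~\ref{A1}--\ref{B2}.
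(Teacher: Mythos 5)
Your proposal is correct and follows essentially the same route as the paper: both construct the L\"ob sentence via Lemma~\ref{DiagonalizationLemma} applied to $T_\phi$ and then run the standard Hilbert--Bernays--L\"ob chain through Propositions~\ref{A1}, \ref{A2}, \ref{A3}, \ref{ModusPonens}, \ref{B1} and \ref{B2}. The only (immaterial) difference is bookkeeping order at the end --- you extract $U\models L$ from the fixed point and then box it with Proposition~\ref{A1}, whereas the paper boxes $\square L \vdash \phi$ first and then invokes the fixed-point identity at the boxed level.
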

\begin{proof}
\begin{enumerate}
    \item \label{Step1} Start with $L=L_{\phi}:1 \to \mathcal{P}1'$, the Lob sentence constructed using Lemma \ref{DiagonalizationLemma} using the operator $T_{\phi}$ acting as $T_{\phi}(\beta)=RHom(\square \beta),R(\phi)): \mathcal{P}1'\to \mathcal{P}1'$. We have the equivalence $T_{\phi}(L)=L$.
    \item \label{Step2} By assumption we have $U \models \square \phi \vdash \phi$.
    \item \label{Step3}  $ U\models (\name{U\models L} \vdash \phi) \to (\name{U \models L} \vdash \square \phi)$ by proposition \ref{A3}.
    \item \label{Step4} $U\models \name{U \models L} \to (\name{U \models \name{ U\models L}} \to \name{U\models \phi}$ by steps \ref{Step1},\ref{Step3} and proposition \ref{B1}
    \item \label{Step5} $U\models \name{U\models L} \to \name{U\models \name{U \models \phi}}$ by proposition \ref{A2}.
    \item \label{Step6} $U \models \name{ U \models L} \to \name{U \models \phi}$ by steps \ref{Step4}, \ref{Step5} and proposition \ref{B2}.
    \item \label{Step7} $U\models \name{U \models L} \to \phi $ by steps \ref{Step2}, \ref{Step6} and proposition \ref{B1}.
    \item \label{Step8} $U\models \name{U\models \name{U \models L} \to \phi }$ by Step \ref{Step7} and proposition \ref{A1}.
    \item \label{Step9} $U \models \name{U \models L}$ by step \ref{Step1} and proposition \ref{ModusPonens} [Modus Ponens].
    \item \label{Step10} $U \models \phi$ by steps \ref{Step7}, \ref{Step9} and proposition \ref{ModusPonens} [Modus Ponens].
\end{enumerate}
\end{proof}
\begin{remark}
As arithmetic universes are not in general cartesian closed there is no one notion of implication. In many ways the above discussion merely evaded this issue by looking mostly at entailment. It would be of interest to investigate novel implication concepts and ascertain whether or not Lob's theorem may be proved of them.
\end{remark}

\bibliographystyle{apalike}
\bibliography{BibliographyAU}

\end{document}